\documentclass[a4paper,12pt]{amsart}
\usepackage{amsmath}
\usepackage{amsthm}
\usepackage{amssymb}
\usepackage{graphicx}
\numberwithin{equation}{section}
\theoremstyle{plain}
\newtheorem{theorem}{Theorem}[section]
\newtheorem{proposition}[theorem]{Proposition}
\newtheorem{lemma}[theorem]{Lemma}
\newtheorem{corollary}[theorem]{Corollary}
\theoremstyle{definition}
\newtheorem{definition}[theorem]{Definition}
\newtheorem{remark}[theorem]{Remark}
\newtheorem{problem}{Problem}
\makeatletter
\renewenvironment{abstract}{%
  \ifx\maketitle\relax
    \ClassWarning{\@classname}{Abstract should precede
      \protect\maketitle\space in AMS document classes; reported}%
  \fi
  \global\setbox\abstractbox=\vtop \bgroup
    \normalfont\Small
    \list{}{\labelwidth\z@
      \leftmargin2.95pc \rightmargin\leftmargin
      \listparindent\normalparindent \itemindent\z@
      \parsep\z@ \@plus\p@
      
    }%
    \item[\hskip\labelsep\scshape\abstractname.]%
}{%
  \endlist\egroup
  \ifx\@setabstract\relax \@setabstracta \fi
}
\makeatother
\newcommand{\defeq}{\mathrel{\mathop:}=}
\newcommand{\defqe}{=\mathrel{\mathop:}}
\newcommand{\R}{{\mathbb R}}
\newcommand{\C}{{\mathbb C}}

\newcommand{\RR}{{\mathcal R}}
\renewcommand{\i}{{i\,}}
\newcommand{\vg}{{v_g}}
\newcommand{\X}{{\mathfrak X}}
\DeclareMathOperator{\id}{id}
\DeclareMathOperator{\Div}{div}
\DeclareMathOperator{\arctanh}{arcth}
\newcommand{\sn}{{\mathbf{sn}}}
\newcommand{\cn}{{\mathbf{cn}}}
\newcommand{\dn}{{\mathbf{dn}}}
\newcommand{\am}{{\mathbf{am}}}
\renewcommand{\emph}[1]{{\itshape #1}}
\pagestyle{plain}
\begin{document}
\title[Conformal change of Riemannian metrics and biharmonic maps]{Conformal change of Riemannian metrics and biharmonic maps}
\author{Hisashi Naito}
\address{
  Graduate School of Mathematics, 
  Nagoya University, 
  \newline\hspace{\parskip}
  Furocho, Chikusaku, Nagoya, 464-8602, Japan
}
\email{naito@math.nagoya-u.ac.jp}
\author{Hajime Urakawa}
\address{
  Institute for International Education, Tohoku University, 
  \newline\hspace{\parskip}
  Kawauchi 41, Sendai, 980-8576, Japan
}
\email{urakawa@math.is.tohoku.ac.jp}
\keywords{harmonic map, biharmonic map, conformal change, ordinary differential equation}
\subjclass[2000]{primary 58E20, secondary 53C43}
\thanks{
  Supported by the Grant-in-Aid for the Scientific Research, (C) No. 21540207 and (C) No. 22540075, 
  Japan Society for the Promotion of Science. 
}
\begin{abstract}
  For the reduction ordinary differential equation due to Baird and Kamissoko \cite{BK} for biharmonic maps from a Riemannian manifold
  $(M^m,g)$ into another one  $(N^n,h)$, we show that this ODE has no global positive solution for every $m\geq 5$.
  On the contrary, we show that 
  there exist global positive solutions in the case $m=3$. 
  As applications, for the the Riemannian product $(M^3,g)$ of the line and a Riemann surface, we construct the new metric $\widetilde{g}$ on $M^3$ conformal to $g$ 
  such that every nontrivial product harmonic map from $M^3$ with respect to  the original metric $g$ must be biharmonic but not harmonic with respect to the new metric $\widetilde{g}$. 
\end{abstract}
\maketitle
\section{Introduction}
Harmonic maps play a central role in variational problems and geometry.
They are critical points of the energy functional 
$E(\varphi)=\frac12\int_M \Vert d\varphi\Vert^2 \vg$
for smooth maps $\varphi$ of $(M,g)$ into $(N,h)$, 
and the Euler-Lagrange equation is that the tension field $\tau(\varphi)$ vanishes. 
By extending the notion of harmonic maps, in 1983, J. Eells and L. Lemaire \cite{EL1} 
introduced the bienergy functional
\begin{displaymath}
  E_2(\varphi)=\frac12\int_M \vert\tau(\varphi)\vert^2\vg.
\end{displaymath}
After G.Y. Jiang \cite{J} studied the first and second variation formulas of $E_2$, whose critical maps are called biharmonic maps, 
there have been extensive studies in this area 
(for instance, see \cite{CMP}, \cite{I}, \cite{II}, \cite{LO}, \cite{LO2}, \cite{MO1}, \cite{O1}, etc.). 
It is clear that the biharmonicity derives from harmonicity. 
In the compact case, a harmonic map is a minimum of the bienergy functional, therefore a critical point, 
i.e., it is biharmonic. In the non-compact case, from the biharmonic equation 
$\tau(\varphi)=0$ implies $\tau_2(\varphi)=0$. 
\par 
One of most important problems on biharmonic maps is to 
construct biharmonic maps which are not harmonic (cf. \cite{BK}, \cite{CMP}, \cite{IIU}, \cite{J}, \cite{MO1}, \cite{O1}). 
P. Baird and D. Kamissoko \cite{BK} raised an interesting idea 
to produce a biharmonic but not harmonic map 
of $(M,\widetilde{g})$ into $(N,h)$ 
with conformal change 
of $g$ into $\widetilde{g}$,  by a factor of 
$C^{\infty}$ function $f$, and 
they reduced to problem to the existence of non-trivial positive solutions of the ordinary differential equation on 
$f$:
\begin{equation}
  \label{eq:0}
  f^2 f'''-2 \frac{m+1}{m-2} f f' f''+\frac{m^2}{(m-2)^2} {f'}^3=0, 
\end{equation}
and gave some interesting examples 
biharmonic maps.  However, they did not reach to a final answer on the very interesting and important existence problem of global solutions of the ODE (\ref{eq:0}).
\par
In this paper, we give a final and complete answer to this problem. 
Our conclusion is the following: 
If $\dim M \ge 5$, 
there is NO global solution of this ODE (\ref{eq:0}) (cf. Theorem \ref{theorem:7:1}),
and 
if $\dim M=3$, $4$, 
there exists a global solution $f$ of (\ref{eq:0}) (cf. Theorem \ref{theorem:7:1}). 
Furthermore, there is NO periodic solution of (\ref{eq:0}) (cf. Theorem \ref{theorem:case3:3}).  
Our method of proof is to analyze the solutions of (\ref{eq:0}) 
based on the comparison theorem for the ordinary differential inequality (cf. Lemma \ref{lemma:6:1}).
In case of $\dim M = 8$, 
the ODE (\ref{eq:0}) is related to the Jacobi's elliptic function, 
hence by using analysis on the poles and zeros of the Jacobi's elliptic function 
and the energy equality (cf. Proposition \ref{proposition:6:2} and Lemma \ref{lemma:7:2}), 
we obtain the non-existence result of global solutions of (\ref{eq:0}).
In case of $\dim M \ge 5$, 
by using the energy inequality (Lemma \ref{lemma:7:2}), 
we also obtain the non-existence result of global solutions of (\ref{eq:0}).
\par 
Finally, we show 
\begin{theorem}[cf. Theorem \ref{theorem:9:1}]
  \label{theorem:introduction:1}
  Assume $m = 3$, $4$, 
  for a given harmonic map 
  $\varphi \colon (\Sigma^{m-1},g)\rightarrow (P,h)$, 
  let us define 
  $\widetilde{\varphi} \colon \R \times \Sigma^{m-1}\ni (x,y)\mapsto 
  (ax+b,\varphi(y))\in {\mathbb R}\times P$ where 
  $a$ and $b$ are constants, and define also 
  $\widetilde{f}(x,y) \defeq f(x)$ ($(x,y)\in \R \times \Sigma^{m-1}$), where 
  $f$ is the solution of the ODE (\ref{eq:0}). 
  Then, 
  \begin{enumerate}
  \item 
    In the case $m=3$, the mapping $\widetilde{\varphi} \colon (\R\times \Sigma^2,\widetilde{f}^2 g) \rightarrow (\R\times P,h)$ is biharmonic, 
    but not harmonic if $a\not=0$.
  \item 
    In the case $m=4$, the mapping  
    $\widetilde{\varphi} \colon (\R \times \Sigma^3,\frac{1}{\cosh x} g)
    \rightarrow (\R\times P,h)$ is biharmonic, 
    but not harmonic if $a\not=0$.   
  \end{enumerate}
\end{theorem}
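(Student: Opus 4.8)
The plan is to make the Baird--Kamissoko reduction explicit for this particular family of product maps: I will compute the tension and then the bitension field of $\widetilde\varphi$ under the conformal change, show that biharmonicity is \emph{equivalent} to the ODE \eqref{eq:0}, and conclude by invoking the existence of a suitable non-constant global positive solution of \eqref{eq:0} (for $m=3$, one supplied by Theorem~\ref{theorem:7:1}; for $m=4$, the explicit function $1/\cosh x$, which one checks directly solves \eqref{eq:0} with $m=4$). It is convenient to write the conformal metric as $\widetilde g=\widetilde\lambda^{\,2}g$ with $\widetilde\lambda:=\widetilde f^{\,1/(m-2)}$, so that $\widetilde\lambda^{\,2}=\widetilde f^{\,2}$ when $m=3$ and $\widetilde\lambda^{\,2}=\widetilde f$ when $m=4$, matching the two cases of the statement; here $g=ds^2\times g_\Sigma$ is the product metric on $M=\R\times\Sigma^{m-1}$ and $N=\R\times P$ carries a product metric $h$ with flat $\R$-factor.

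First I would observe that $\widetilde\varphi$, being the Riemannian product of the totally geodesic map $x\mapsto ax+b$ and the harmonic map $\varphi$, is harmonic for $g$, i.e. $\tau_g(\widetilde\varphi)=0$. Applying the conformal-change formula for the tension field,
\[
  \tau_{\widetilde g}(\widetilde\varphi)=\widetilde\lambda^{-2}\bigl(\tau_g(\widetilde\varphi)+(m-2)\,d\widetilde\varphi(\operatorname{grad}_g\log\widetilde\lambda)\bigr),
\]
and using that $\widetilde f$ depends only on $x$ (so $\operatorname{grad}_g\log\widetilde\lambda=\tfrac{1}{m-2}\tfrac{f'}{f}\,\partial_x$) and $d\widetilde\varphi(\partial_x)=a\,\partial_s$ with $\partial_s$ the unit field of the $\R$-factor of $N$, I obtain
\[
  \tau_{\widetilde g}(\widetilde\varphi)=a\,u\,\partial_s,\qquad u:=f'\,f^{-m/(m-2)}.
\]
This already gives the ``not harmonic'' half: if $a\neq0$, then $\tau_{\widetilde g}(\widetilde\varphi)\equiv0$ would force $f'\equiv0$, contradicting that $f$ is a non-constant solution of \eqref{eq:0}.

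For biharmonicity I would use $\tau_2(\widetilde\varphi)=-\Delta^{\widetilde\varphi}\tau_{\widetilde g}(\widetilde\varphi)-\operatorname{trace}_{\widetilde g}R^N\bigl(\tau_{\widetilde g}(\widetilde\varphi),d\widetilde\varphi(\cdot)\bigr)d\widetilde\varphi(\cdot)$ together with two consequences of the product structure of $N$ with its flat line factor: the field $\partial_s$ is parallel on $N$ and stays parallel when pulled back along $\widetilde\varphi$, so $\Delta^{\widetilde\varphi}(a\,u\,\partial_s)=a\,(\Delta_{\widetilde g}u)\,\partial_s$ with $\Delta_{\widetilde g}$ the scalar Laplacian of $(M,\widetilde g)$; and $R^N$ annihilates any argument along $\R$, so the curvature term vanishes identically. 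Hence $\widetilde\varphi$ is biharmonic if and only if $\Delta_{\widetilde g}u=0$. Applying the conformal-change formula $\Delta_{\widetilde g}u=\widetilde\lambda^{-2}\bigl(\Delta_g u+(m-2)\langle\operatorname{grad}_g\log\widetilde\lambda,\operatorname{grad}_g u\rangle_g\bigr)$ to $u=u(x)$ collapses this to $\Delta_{\widetilde g}u=\widetilde\lambda^{-2}f^{-1}(f\,u')'$, and a direct computation (with $\beta:=m/(m-2)$, using $1-3\beta=-\tfrac{2(m+1)}{m-2}$ and $\beta^2=\tfrac{m^2}{(m-2)^2}$) identifies $(f\,u')'$ with $f^{-\beta-1}$ times the left-hand side of \eqref{eq:0}. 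Since $f>0$, the condition $\Delta_{\widetilde g}u=0$ is precisely \eqref{eq:0}; as $f$ solves \eqref{eq:0} by hypothesis (for $m=4$, as $1/\cosh x$ does), $\widetilde\varphi$ is biharmonic. Note that $a,b$ and $\varphi$ never enter the reduced ODE, so the one conformal factor serves all admissible product harmonic maps at once.

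I expect the main obstacle to be the bookkeeping in the bitension computation: one must use the correct power $\widetilde\lambda=\widetilde f^{\,1/(m-2)}$ of the conformal factor — with any other power the reduced equation is not \eqref{eq:0} — keep strict control of the sign and normalization conventions for $\Delta^{\widetilde\varphi}$, $R^N$ and $\tau_2$, check carefully that the curvature contribution drops and that the rough Laplacian reduces to the scalar one, and carry through the algebra that turns the third-order expression $\Delta_{\widetilde g}u$ into the first integral $(f\,u')'$ and then into \eqref{eq:0}. The genuinely hard analytic ingredient, existence of a global positive solution of \eqref{eq:0} for $m=3,4$, is not needed in this argument; it is provided by Theorem~\ref{theorem:7:1}.
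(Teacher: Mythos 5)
Your proposal is correct, and it lands on exactly the reduction the paper uses --- biharmonicity of $\widetilde{\varphi}$ with respect to $\widetilde{g}$ holds iff $a=0$ or $f$ satisfies (\ref{eq:0}), after which one invokes Theorem \ref{theorem:case3:2} for $m=3$ and the explicit solution $1/\cosh$ for $m=4$ (Theorem \ref{theorem:case4:1}) --- but by a genuinely different computational route. The paper specializes its general conformal-change formula for the bitension field (Corollary \ref{corollary:3:6}, packaged for harmonic $\varphi$ as equation (\ref{eq:4:1}) of Theorem \ref{theorem:4:1}): in Section 8 it computes the $g$-quantities $\varphi_{\ast}(\nabla^g f)=af'\,\partial_t$, $\overline{\nabla}_{\nabla^g f}\varphi_{\ast}(\nabla^g f)=af'f''\,\partial_t$, $J_g(\varphi_{\ast}(\nabla^g f))=-af'''\,\partial_t$, $\Delta_g f=-f''$, substitutes into (\ref{eq:4:1}), and reads off $-a$ times the left-hand side of (\ref{eq:0}). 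You bypass Lemma \ref{lemma:3:4} and Corollary \ref{corollary:3:6} entirely: you use only the conformal transformation of the tension field to get $\tau_{\widetilde g}(\widetilde\varphi)=au\,\partial_s$ with $u=f'f^{-m/(m-2)}$ (consistent with (\ref{eq:3:13})), observe that $\partial_s$ is a parallel section of $\widetilde\varphi^{-1}TN$ tangent to the flat factor of $N$ so that the Jacobi operator collapses to the scalar Laplacian and the curvature term vanishes, and then use the conformal transformation of the scalar Laplacian to reduce $\tau_2=a(\Delta_{\widetilde g}u)\,\partial_s=0$ to $(fu')'=0$, which your algebra correctly identifies with (\ref{eq:0}) via $1-3\beta=-2(m+1)/(m-2)$ and $\beta^2=m^2/(m-2)^2$. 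Your route buys structural transparency --- it makes clear why the condition is a scalar third-order ODE independent of $a$, $b$ and $\varphi$, and exhibits $(fu')'=0$ as a first-integral form of (\ref{eq:0}) --- while the paper's route has the advantage that formula (\ref{eq:4:1}) applies to targets with no parallel section available. Two minor points of hygiene: the biharmonicity condition is $a\,\Delta_{\widetilde g}u=0$ rather than $\Delta_{\widetilde g}u=0$ (immaterial, since for $a=0$ the map is already harmonic), and the ``not harmonic'' conclusion requires $f$ non-constant, which is what Theorems \ref{theorem:case3:2} and \ref{theorem:case4:1} in fact provide.
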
 
Outline of this paper is as follows: 
\par
After preparing the basic materials, we show 
several formulas under conformal change of Riemannian metrics, 
and derive (\ref{eq:0}) by a different manner as 
P. Baird and D. Kamissoko (\cite{BK}) in Sections 3, 4 and 5. 
In Section 5, 
we also construct an 2nd order non-linear ODE (\ref{eq:5:6}) from (\ref{eq:0}) 
by using the Cole-Hopf transformation $u=f'/f$.
In Section 6, we show our main tools to analyze the solutions of (\ref{eq:5:6}), 
the comparison theorem of the ordinary differential inequalities (cf. Lemma \ref{lemma:6:1}), 
the analysis of the poles and zeros of the Jacobi's elliptic functions (cf. Proposition \ref{proposition:6:2}), 
and the energy inequality for (\ref{eq:5:6}) (cf. Lemma \ref{lemma:7:2}).
In Section 7, we show main results of this paper, 
the non-existence and existence of solutions of (\ref{eq:5:6}) and hence them of (\ref{eq:0})
by using the energy inequality and the comparison theorem.
Finally, in Section 8, we show applications to constructions of biharmonic, but not harmonic maps between the product manifolds whose 
dimension of the domain manifold $M$ is 3 or 4 
(cf. Theorem 1.1, Theorem \ref{theorem:9:1}). 
\section{Preliminaries}
In this section, we prepare materials for the first and second variation formulas for the bienergy functional and biharmonic maps. 
Let us recall the definition of a harmonic map $\varphi\colon (M,g)\rightarrow (N,h)$, of a compact Riemannian manifold $(M,g)$ into another Riemannian manifold $(N,h)$, 
which is an extremal of the {\em energy functional} defined by 
\begin{displaymath}
  E(\varphi)=\int_Me(\varphi)\vg
\end{displaymath}
where $e(\varphi) \defeq \frac12\vert d\varphi\vert^2$ is called the energy density of $\varphi$. 
That is, for any variation $\{\varphi_t\}$ of $\varphi$ with $\varphi_0=\varphi$, 
\begin{equation}
  \label{eq:preliminaries:1}
  \left.
    \frac{d}{dt}
  \right|_{t=0}
  E(\varphi_t)
  =
  -\int_{M}h(\tau(\varphi),V)\vg
  =0,
\end{equation}
where $V\in \Gamma(\varphi^{-1}TN)$ 
is a variation vector field along $\varphi$ which is given by 
\begin{math}
 V(x)=\left.\frac{d}{dt}\right|_{t=0}\varphi_t(x)\in T_{\varphi(x)}N, 
\end{math}
$(x\in M)$, 
and the {\em tension field} is given by 
\begin{math}
  \tau(\varphi)
  =
  \sum_{i=1}^m B(\varphi)(e_i,e_i)\in \Gamma(\varphi^{-1}TN), 
\end{math}
where $\{e_i\}_{i=1}^m$ is a locally defined frame field on $(M,g)$, 
and $B(\varphi)$ is the second fundamental form of $\varphi$ defined by 
\begin{equation}
  \label{eq:preliminaries:2}
  \begin{aligned}
    B(\varphi)(X,Y)
    &=(\widetilde{\nabla}d\varphi)(X,Y)
    \\
    &=(\widetilde{\nabla}_X d\varphi)(Y)
    \\
    &=\overline{\nabla}_X(d\varphi(Y))-d\varphi(\nabla_X Y)
    \\
    &={}^N\!\nabla_{\varphi_{\ast}(X)}d\varphi(Y) -\varphi_{\ast}(\nabla_X Y),
  \end{aligned}
\end{equation}
for all vector fields $X$, $Y \in \X(M)$. 
Furthermore, $\nabla$, and ${}^N\!\nabla$, are connections on $TM$, $TN$ of $(M,g)$, $(N,h)$, respectively, 
and $\overline{\nabla}$, 
and $\widetilde{\nabla}$ 
are the induced ones on $\varphi^{-1}TN$, 
and $T^{\ast} M\otimes \varphi^{-1}TN$, respectively. 
By (\ref{eq:preliminaries:1}), $\varphi$ is harmonic if and only if $\tau(\varphi)=0$. 
\par
The second variation formula is given as follows. 
Assume that $\varphi$ is harmonic. 
Then, 
\begin{equation}
  \label{eq:preliminaries:3}
  \left.
    \frac{d^2}{dt^2}
  \right|_{t=0}
  E(\varphi_t)
  =\int_{M}h(J(V),V)\vg
\end{equation}
where $J$ is an elliptic differential operator, called 
{\em Jacobi operator} defined on $\Gamma(\varphi^{-1}TN)$ given by 
\begin{equation}
  \label{eq:preliminaries:4}
  J(V)=\overline{\Delta}V-\RR(V),
\end{equation}
where 
$\overline{\Delta}V=\overline{\nabla}^{\ast}\overline{\nabla}V$ 
is the {\em rough Laplacian} and 
$\RR$ is a linear operator on $\Gamma(\varphi^{-1}TN)$
given by 
\begin{math}
  \RR V=
  \sum_{i=1}^m R^{\!N}(V,d\varphi(e_i))d\varphi(e_i), 
\end{math}
and $R^{\!N}$ is the curvature tensor of $(N,h)$ given by 
\begin{math}
  R^{\!N}(U,V)W={}^N\!\nabla_U{}^N\!\nabla_V W
  -{}^N\!\nabla_V{}^N\!\nabla_U W-{}^N\!\nabla_{[U,V]}W
\end{math}
for $U$, $V$, $W \in \X(N)$.   
\par
J. Eells and L. Lemaire proposed (\cite{EL1}) polyharmonic ($k$-harmonic) maps and 
Jiang studied (\cite{J}) the first and second variation formulas of biharmonic maps. Let us consider the {\em bienergy functional} 
defined by 
\begin{equation}
  \label{eq:preliminaries:5}
  E_2(\varphi)=\frac12\int_M\vert\tau(\varphi)\vert ^2\vg
\end{equation}
where $\vert V\vert^2=h(V,V)$, $V\in \Gamma(\varphi^{-1}TN)$.  
Then, the first and second variation formulas are given as follows. 
\begin{theorem}[The first variation formula \cite{J}]
  \label{theorem:preliminaries:1}
  \begin{equation}
    \label{eq:preliminaries:6}
    \left.
      \frac{d}{dt}
    \right|_{t=0}
    E_2(\varphi_t)
    =-\int_{M}h(\tau_2(\varphi),V)\vg
  \end{equation}
  where 
  \begin{equation}
    \label{eq:preliminaries:7}
    \tau_2(\varphi)=J(\tau(\varphi))=\overline{\Delta}\tau(\varphi)-\RR(\tau(\varphi)),
  \end{equation}
  $J$ is given in (\ref{eq:preliminaries:4}).
\end{theorem}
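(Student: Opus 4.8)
The plan is to differentiate $E_2(\varphi_t)=\frac12\int_M|\tau(\varphi_t)|^2\,\vg$ under the integral sign, commute the $t$-derivative past the spatial covariant derivatives that define the tension field (which produces a curvature term), and then integrate by parts using the self-adjointness of the Jacobi operator. To organize the variation I would regard $\{\varphi_t\}$ as a single smooth map $\Phi\colon M\times(-\epsilon,\epsilon)\to N$, $\Phi(x,t)=\varphi_t(x)$, and work throughout with the connection $\overline\nabla$ induced on $\Phi^{-1}TN$. I extend a local orthonormal frame $\{e_i\}$ on $(M,g)$ and the coordinate field $\partial_t$ to $M\times(-\epsilon,\epsilon)$, choosing $\{e_i\}$ geodesic at the base point so that $\nabla_{e_i}e_j=0$ there and $[\partial_t,e_i]=0$, and I set $V=d\Phi(\partial_t)|_{t=0}$. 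Differentiating under the integral and using $\frac{d}{dt}|\tau|^2=2\,h(\overline\nabla_{\partial_t}\tau,\tau)$ gives
\[
  \frac{d}{dt}\Big|_{t=0}E_2(\varphi_t)=\int_M h\big(\overline\nabla_{\partial_t}\tau(\varphi_t)\big|_{t=0},\,\tau(\varphi)\big)\,\vg .
\]

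The heart of the argument is to evaluate $\overline\nabla_{\partial_t}\tau(\varphi_t)$ at $t=0$. Writing $\tau(\varphi_t)=\sum_i\overline\nabla_{e_i}d\Phi(e_i)$ at the base point in the geodesic frame, I would commute $\overline\nabla_{\partial_t}$ with $\overline\nabla_{e_i}$ via the curvature identity for the pullback connection,
\[
  \overline\nabla_{\partial_t}\overline\nabla_{e_i}W-\overline\nabla_{e_i}\overline\nabla_{\partial_t}W=R^{\!N}\!\big(d\Phi(\partial_t),d\Phi(e_i)\big)W,
\]
together with the torsion-free symmetry $\overline\nabla_{\partial_t}d\Phi(e_i)=\overline\nabla_{e_i}d\Phi(\partial_t)$, valid since $[\partial_t,e_i]=0$. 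Evaluating at $t=0$, where $d\Phi(\partial_t)=V$ and $d\Phi(e_i)=d\varphi(e_i)$, this yields
\[
  \overline\nabla_{\partial_t}\tau(\varphi_t)\big|_{t=0}=\sum_i\overline\nabla_{e_i}\overline\nabla_{e_i}V+\sum_iR^{\!N}\!\big(V,d\varphi(e_i)\big)d\varphi(e_i)=-\overline\Delta V+\RR(V)=-J(V),
\]
recognizing the rough Laplacian $\overline\Delta=\overline\nabla^{\ast}\overline\nabla$ and the operator $\RR$ exactly as in (\ref{eq:preliminaries:4}).

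Substituting back, I would then integrate by parts. Since $(M,g)$ is compact, $\overline\Delta$ is formally self-adjoint on $\Gamma(\varphi^{-1}TN)$, and $\RR$ is pointwise self-adjoint by the pair symmetry $h(R^{\!N}(X,Y)Z,W)=h(R^{\!N}(Z,W)X,Y)$ of the curvature tensor; hence $J=\overline\Delta-\RR$ is self-adjoint in $L^2(\varphi^{-1}TN)$. Therefore
\[
  \frac{d}{dt}\Big|_{t=0}E_2(\varphi_t)=-\int_M h\big(J(V),\tau(\varphi)\big)\,\vg=-\int_M h\big(V,J(\tau(\varphi))\big)\,\vg,
\]
which is precisely (\ref{eq:preliminaries:6}) with $\tau_2(\varphi)=J(\tau(\varphi))=\overline\Delta\tau(\varphi)-\RR(\tau(\varphi))$.

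The main obstacle is the bookkeeping in the commutation step: one must correctly produce the curvature term in the right slots, apply the symmetry of the second fundamental form, and keep track of the sign convention relating $\sum_i\overline\nabla_{e_i}\overline\nabla_{e_i}$ to $\overline\Delta=\overline\nabla^{\ast}\overline\nabla$. The remaining care is to justify that the passage of $J$ from $V$ to $\tau(\varphi)$ introduces no boundary terms, which is where compactness of $M$ (or a suitable decay/support hypothesis on the variation) is used.
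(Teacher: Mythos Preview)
Your argument is correct and is the standard derivation of Jiang's first variation formula: compute $\overline\nabla_{\partial_t}\tau(\varphi_t)$ by commuting $\overline\nabla_{\partial_t}$ past $\overline\nabla_{e_i}$ (producing the curvature term $\RR$), use the symmetry $\overline\nabla_{\partial_t}d\Phi(e_i)=\overline\nabla_{e_i}d\Phi(\partial_t)$, identify the result as $-J(V)$, and then shift $J$ onto $\tau(\varphi)$ by self-adjointness. The sign tracking and the justification of self-adjointness of $\RR$ via the pair symmetry of $R^{\!N}$ are both handled correctly.

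Note, however, that the paper does not supply its own proof of this theorem; it is stated as a quoted result from \cite{J}. So there is no ``paper's proof'' to compare against here---your write-up is essentially the classical argument that Jiang gave, and it is sound. The only cosmetic point worth tightening is that the geodesic-frame computation is pointwise, so one should remark that the identity $\overline\nabla_{\partial_t}\tau|_{t=0}=-J(V)$ holds at every point (with the $\nabla_{e_i}e_i$ terms restored away from the chosen point, they cancel in the same way), but this is routine.
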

\begin{definition}
  \label{definition:preliminaries:2}
  A smooth map $\varphi$ of $M$ into $N$ is said to be 
  {\em biharmonic} if 
  $\tau_2(\varphi)=0$. 
\end{definition} 
\section{Formulas under conformal change of Riemannian metrics}
In this section, 
we show several formulas under conformal changes of the domain Riemannian manifold $(M,g)$. 
Let $(M,g)$ and $(N,h)$ be two Riemannian manifolds, and $\varphi\colon M\rightarrow N$, a $C^{\infty}$ map from $M$ into $N$. 
We denote the {\it energy functional} by 
\begin{equation}
  \label{eq:3:1}
  E_1(\varphi:g,h) \defeq \frac12
  \int_M\vert d\varphi\vert_{g,h}{}^2\vg
\end{equation}
where $\vert d\varphi\vert_{g,h}^2$ 
is twice of the energy density of $\varphi$, i.e., 
\begin{displaymath}
  \vert d\varphi\vert_{g,h}{}^2
  \defeq
  \sum_{i=1}^m h(\varphi_{\ast}e_i,\varphi_{\ast}e_i),
\end{displaymath}
and $\{e_i\}_{i=1}^m$ is a local orthonormal frame field on $(M,g)$. 
For a positive $C^{\infty}$ function on $M$, 
we consider the conformal change of the Riemannian metric on $M$, 
$\widetilde{g} \defeq f^{2/(m-2)} g$, where $m=\dim M>2$. 
Then, we have (cf. \cite{EF}) that 
\begin{align}
  \vert d\varphi\vert_{\widetilde{g},h}{}^2
  &=f^{-2/(m-2)} \vert d\varphi\vert_{g,h}{}^2,
  \label{eq:3:2}
  \\
  v_{\widetilde{g}}&=f^{m/(m-2)}\vg.
  \label{eq:3:3}
\end{align}
Thus, we have (\cite[p.161]{EF}) that
\begin{equation}
  \label{eq:3:4}
  E_1(\varphi:\widetilde{g},h)
  =\frac12\int_M f \vert d\varphi\vert_{g,h}{}^2\vg.
\end{equation}
\par
Let us consider the {\it bienergy functional} defined by 
\begin{equation}
  \label{eq:3:5}
  E_2(\varphi:g,h) \defeq \frac12\int_M\vert\tau_g(\varphi)\vert_{g,h}{}^2\vg
\end{equation}
where 
\begin{equation}
  \label{eq:3:6}
  \tau_g(\varphi) \defeq \sum_{i=1}^m
  \left\{{}^{N}\!\nabla_{\varphi_{\ast}e_i}\varphi_{\ast}e_i-\varphi_{\ast}(\nabla^g_{e_i}e_i)
  \right\}\in \Gamma(\varphi^{-1}TN)),
\end{equation}
${}^{N}\!\nabla$, $\nabla^g$ are the Levi-Civita connections of $(N,h)$, $(M,g)$, respectively.  
\par
We first see that 
\begin{equation}
  \label{eq:3:7}
  \begin{aligned}
    \nabla^{\widetilde{g}}_X Y
    =
    \nabla^g_X Y
    &+\frac{1}{m-2}
    \left\{
      f^{-1}(X f)Y
      +f^{-1}(Y f)X
      \vphantom{-g(X,Y) f^{-1}\sum_{i=1}^m  (e_if)e_i,}
    \right.
    \\
    &
    \left.
      -g(X,Y) f^{-1}\sum_{i=1}^m
      (e_if)e_i,
    \right\}
  \end{aligned}
\end{equation}
for all $X,Y\in \X(M)$. 
Then, we have 
\begin{equation}
  \label{eq:3:8-10}
  \begin{aligned}
    \tau_{\widetilde{g}}(\varphi)
    &=f^{-2/(m-2)} \tau_g(\varphi)
    +f^{-m/(m-2)} \varphi_{\ast}(\nabla^g f)
    \\
    &=f^{2/(2-m)}
    \left\{
      \tau_g(\varphi)+f^{-1}\varphi_{\ast}(\nabla^g f)
    \right\}\\
    &=f^{m/(2-m)} {\Div}_g(f d\varphi),
  \end{aligned}
\end{equation}
where 
$\nabla^g f \defeq \sum_{j=1}^m(e_j f) e_j\in  \X(M)$ 
for $f\in C^{\infty}(M)$, and 
\begin{displaymath}
  \begin{aligned}
    {\Div}_g(d\varphi)& \defeq \sum_{i=1}^m
    (\widetilde{\nabla}_{e_i}d\varphi)(e_i)
    =\sum_{i=1}^m
    \left\{
      \widetilde{\nabla}_{e_i}(d\varphi(e_i))
      -d\varphi(\nabla_{e_i}e_i)
    \right\}\\
    &=\sum_{i=1}^m
    \left\{
      {}^N\!\nabla_{\varphi_{\ast}(e_i)}d\varphi(e_i)
      -\varphi_{\ast}(\nabla_{e_i}e_i)
    \right\}.
  \end{aligned}
\end{displaymath}
Here, recall that 
$\widetilde{\nabla}$ is the induced connection on 
$\varphi^{-1}TN \otimes T^{\ast}M$ from ${}^{N}\!\nabla$ and $\widetilde{g}$, 
and we have 
\begin{equation}
  \label{eq:3:11}
  f^{m/(2-m)} {\Div}_g(f d\varphi)
  =f^{m/(2-m)} d\varphi(\nabla^g f)
  +f^{2/(2-m)}\tau_g(\varphi).
\end{equation}
Therefore, it holds (cf. \cite{EF}) that
$\varphi \colon (M,\widetilde{g})\rightarrow (N,h)$ is harmonic 
if and only if 
\begin{equation}
  \label{eq:3:12}
  f \tau_g(\varphi)+\varphi_{\ast}(\nabla^g f)=0.
\end{equation}
\par
Summing up the above, we have 
\begin{lemma}[cf. {\cite[p.161]{EF}, \cite[p.135]{YLOu}}]
  \label{lemma:3:1}
  The Euler-Lagrange equation of the energy functional 
  $E_1(\varphi:\widetilde{g},h)$ 
  is given by 
  \begin{align}
    \tau(\varphi:\widetilde{g},h)
    &=f^{2/(2-m)} 
    \left\{
      \tau(\varphi:g,h)+\varphi_{\ast}(\nabla^g\log f)
    \right\}
    \label{eq:3:13}
    \\
    &=f^{m/(2-m)} {\Div}_g(f d\varphi).
    \label{eq:3:14}
  \end{align}
  Thus, $\varphi \colon (M,\widetilde{g})\rightarrow (N,h)$ 
  is harmonic if and only if 
  ${\Div}_g(f d\varphi)=0$. 
\end{lemma}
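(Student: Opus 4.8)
The plan is to read the lemma as the first variation of $E_1(\,\cdot:\widetilde{g},h)$ rewritten through the conformal formulas already established. Applying the first variation formula (\ref{eq:preliminaries:1}) with $g$ replaced throughout by $\widetilde{g}$ shows that a smooth $\varphi$ is a critical point of $E_1(\,\cdot:\widetilde{g},h)$ exactly when the tension field built from $\widetilde{g}$, that is $\tau(\varphi:\widetilde{g},h)=\tau_{\widetilde{g}}(\varphi)$ in the notation of (\ref{eq:3:6}), vanishes. So the whole content of the lemma reduces to expressing $\tau_{\widetilde{g}}(\varphi)$ in the two stated forms and noting that the scalar prefactors are nowhere zero.

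For the formulas I would start from the Koszul computation (\ref{eq:3:7}) for $\nabla^{\widetilde{g}}$, plug the $\widetilde{g}$-orthonormal frame $\widetilde{e}_i=f^{-1/(m-2)}e_i$ into the definition (\ref{eq:3:6}) of $\tau_{\widetilde{g}}(\varphi)$, and collect the powers of $f$ produced by $\varphi_\ast(\widetilde{e}_i)$, by $\nabla^{\widetilde{g}}_{\widetilde{e}_i}\widetilde{e}_i$ and by the correction terms of (\ref{eq:3:7}); this gives precisely (\ref{eq:3:8-10}), namely $\tau_{\widetilde{g}}(\varphi)=f^{2/(2-m)}\{\tau_g(\varphi)+f^{-1}\varphi_\ast(\nabla^g f)\}=f^{m/(2-m)}\Div_g(f\,d\varphi)$. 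Writing $\tau(\varphi:g,h)$ for $\tau_g(\varphi)$ and using $f^{-1}\nabla^g f=\nabla^g\log f$ turns the middle member into the right-hand side of (\ref{eq:3:13}), and the last member is (\ref{eq:3:14}) as it stands. One can also bypass the connection computation and argue directly: by (\ref{eq:3:4}), $E_1(\varphi:\widetilde{g},h)=\tfrac12\int_M f\,|d\varphi|_{g,h}^2\,\vg$; differentiating a variation $\{\varphi_t\}$ with variation field $V$ and applying the divergence theorem on compact $M$ to the vector field $W\in\X(M)$ defined by $g(W,X)=h(V,f\varphi_\ast X)$, whose divergence is $\Div_g W=\sum_i h(\overline{\nabla}_{e_i}V,f\varphi_\ast e_i)+h(V,\Div_g(f\,d\varphi))$, one gets $\frac{d}{dt}\big|_{t=0}E_1(\varphi_t:\widetilde{g},h)=-\int_M h(V,\Div_g(f\,d\varphi))\,\vg$; comparing with $-\int_M h(\tau(\varphi:\widetilde{g},h),V)\,v_{\widetilde{g}}$ and inserting $v_{\widetilde{g}}=f^{m/(m-2)}\vg$ from (\ref{eq:3:3}) yields (\ref{eq:3:14}) at once, and $\Div_g(f\,d\varphi)=f\{\tau_g(\varphi)+\varphi_\ast(\nabla^g\log f)\}$ then gives (\ref{eq:3:13}).

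Finally, because $f>0$ the factor $f^{m/(2-m)}$ never vanishes, so $\tau(\varphi:\widetilde{g},h)=0$ if and only if $\Div_g(f\,d\varphi)=0$; together with the variational characterisation above this proves the last sentence of the lemma. The only steps needing attention are routine: the $f$-power bookkeeping that distinguishes the two volume densities --- the Euler--Lagrange operator of $E_1(\,\cdot:\widetilde{g},h)$ is the $L^2(v_{\widetilde{g}})$-gradient, not the $L^2(\vg)$-gradient, and it is exactly the ratio $v_{\widetilde{g}}/\vg=f^{m/(m-2)}$ that promotes the naive expression $\Div_g(f\,d\varphi)$ to $f^{m/(2-m)}\Div_g(f\,d\varphi)$, the exponents matching since $\tfrac{m}{2-m}+1=\tfrac{2}{2-m}$ --- and, in the direct approach, the integration-by-parts identity for $W$. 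I do not expect any genuine obstacle here: the statement is essentially an assembly of (\ref{eq:3:3}), (\ref{eq:3:4}) and (\ref{eq:3:7})--(\ref{eq:3:8-10}).
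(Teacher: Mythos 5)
Your proposal is correct and follows essentially the same route as the paper, which states Lemma \ref{lemma:3:1} as a summary of the computation (\ref{eq:3:7})--(\ref{eq:3:12}) and omits a formal proof: one plugs the frame $\widetilde{e_i}=f^{-1/(m-2)}e_i$ and the connection formula (\ref{eq:3:7}) into the definition of the tension field, obtains (\ref{eq:3:8-10}), and concludes the harmonicity equivalence from the positivity of $f$. Your alternative first-variation argument via the divergence of $W$ is also sound (on compact $M$, or with compactly supported variations) and is a legitimate independent check, but it is not what the paper does.
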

\par
Next, we compute the Euler-Lagrange 
equation of the {\it bienergy functional}: 
\begin{displaymath}
  E_2(\varphi:\widetilde{g},h)=
  \frac12\int_M
  \vert
  \tau_{\widetilde{g}}(\varphi)
  \vert_{\widetilde{g},h}{}^2 v_{\widetilde{g}}.
\end{displaymath}
It is known (cf. \cite{J}) that
\begin{equation}
  \label{eq:3:15}
  \begin{aligned}
    \tau_2(\varphi:\widetilde{g},h)
    =&J_{\widetilde{g}}(\tau_{\widetilde{g}}(\varphi))
    \\
    =&\overline{\Delta}_{\widetilde{g}}(\tau_{\widetilde{g}}(\varphi))-\RR_{\widetilde{g}}(\tau_{\widetilde{g}}(\varphi))
    \\
    =&
    -\sum_{i=1}^m
    \left\{
      \overline{\nabla}_{\widetilde{e_i}}
      (\overline{\nabla}_{\widetilde{e_i}}
      \tau_{\widetilde{g}}(\varphi))
      -\overline{\nabla}
      _{\nabla^{\widetilde{g}}_{\widetilde{e_i}}\widetilde{e_i}}\tau_{\widetilde{g}}(\varphi)
    \right\}
    \\
    &-
    \sum_{i=1}^m
    R^{\!N}(\tau_{\widetilde{g}}(\varphi),
    \varphi_{\ast}\widetilde{e_i})
    \varphi_{\ast}\widetilde{e_i},
  \end{aligned}
\end{equation}
where 
$\overline{\nabla}$ is the induced connection 
on $\varphi^{-1}TN$ from the Levi-Civita connection 
${}^N\!\nabla$ on $TN$ of $(N,h)$, 
and $\{\widetilde{e_i}\}_{i=1}^m$ 
is the local orthonormal frame field on 
$(M,\widetilde{g})$ given by 
$\widetilde{e_i} \defeq f^{-1/(m-2)}e_i$ $(i=1,\cdots,m)$. 
\par
We first calculate 
$J_{\widetilde{g}}(V)$, $(V\in \Gamma(\varphi^{-1}TN))$ given by definition as
\begin{align}
  \label{eq:3:16}
  J_{\widetilde{g}}(V)
  &\defeq
  \overline{\Delta}_{\widetilde{g}}(V)-\RR_{\widetilde{g}}(V)\nonumber\\
  &=
  -\sum_{i=1}^m
  \left\{
    \overline{\nabla}_{\widetilde{e_i}}
    (\overline{\nabla}_{\widetilde{e_i}}
    V)
    -\overline{\nabla}
    _{\nabla^{\widetilde{g}}_{\widetilde{e_i}}\widetilde{e_i}}V
  \right\}
  -
  \sum_{i=1}^m
  R^{\!N}(V,
  \varphi_{\ast}\widetilde{e_i})
  \varphi_{\ast}\widetilde{e_i}.
\end{align}
\begin{lemma}
  \label{lemma:3:2}
  The Jacobi operator with respect to $\widetilde{g}$ is 
  \begin{equation}
    \label{eq:3:17}
    J_{\widetilde{g}}(V)=
    f^{2/(2-m)}J_g(V)-f^{m/(2-m)}\overline{\nabla}_{\nabla^g f}V,\quad (V\in \Gamma(\varphi^{-1}TN)).
  \end{equation}
\end{lemma}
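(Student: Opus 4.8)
The plan is to compute $J_{\widetilde g}(V)$ term by term, starting from the definition (\ref{eq:3:16}), and substituting $\widetilde{e_i}=f^{-1/(m-2)}e_i$ together with the connection formula (\ref{eq:3:7}). First I would handle the curvature term: since $\varphi_\ast\widetilde{e_i}=f^{-1/(m-2)}\varphi_\ast e_i$, the sum $\sum_i R^{\!N}(V,\varphi_\ast\widetilde{e_i})\varphi_\ast\widetilde{e_i}$ picks up a scalar factor $f^{-2/(m-2)}=f^{2/(2-m)}$ out of bilinearity, giving exactly $f^{2/(2-m)}\RR_g(V)$. This is the easy part and fixes the overall power of $f$ one expects in front of $J_g(V)$.

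Next I would attack the rough-Laplacian term $-\sum_i\{\overline{\nabla}_{\widetilde{e_i}}(\overline{\nabla}_{\widetilde{e_i}}V)-\overline{\nabla}_{\nabla^{\widetilde g}_{\widetilde{e_i}}\widetilde{e_i}}V\}$. Writing $\overline{\nabla}_{\widetilde{e_i}}V=f^{-1/(m-2)}\overline{\nabla}_{e_i}V$ and then applying $\overline{\nabla}_{\widetilde{e_i}}$ again produces, by the Leibniz rule, a ``good'' term $f^{-2/(m-2)}\overline{\nabla}_{e_i}(\overline{\nabla}_{e_i}V)$ plus a ``derivative-of-the-factor'' term $f^{-1/(m-2)}\bigl(e_i f^{-1/(m-2)}\bigr)\overline{\nabla}_{e_i}V$. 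The first collects into $f^{2/(2-m)}$ times the $g$-rough Laplacian; the second must be combined with the connection-correction term. For that correction I would insert (\ref{eq:3:7}) with $X=Y=\widetilde{e_i}$: because $g(\widetilde{e_i},\widetilde{e_i})=f^{-2/(m-2)}$ (or, more cleanly, $\widetilde g(\widetilde{e_i},\widetilde{e_i})=1$ and one re-expresses everything in the $g$-frame), the sum $\sum_i\nabla^{\widetilde g}_{\widetilde{e_i}}\widetilde{e_i}$ splits into $f^{-2/(m-2)}\sum_i\nabla^g_{e_i}e_i$ plus explicit gradient terms in $f$. The key bookkeeping is that the two ``stray'' pieces — the one from differentiating the scalar $f^{-1/(m-2)}$ and the one coming from the $\frac{1}{m-2}$-terms in (\ref{eq:3:7}) — should partially cancel, with the surviving remainder being precisely $-f^{m/(2-m)}\overline{\nabla}_{\nabla^g f}V$. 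One has to be careful that the three summands in the brace of (\ref{eq:3:7}), after contraction over $i$, leave a single $\nabla^g f$ direction (the trace of the first two, $\frac{2}{m-2}$ times, against $-\frac{m}{m-2}$ from the $g(X,Y)$-term, combines with the $-\frac{1}{m-2}$ from differentiating the scalar exponent $-1/(m-2)$ — this is where all the combinatorics of the exponents lives).

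The main obstacle I anticipate is exactly this exponent accounting: tracking the factors $-1/(m-2)$ coming from the frame normalization against the $\frac{1}{m-2}$ in the Christoffel-type correction (\ref{eq:3:7}) and against the power $f^{m/(2-m)}$ that must emerge on the final term. A clean way to sidestep errors is to work pointwise at a point $p$ with a $g$-geodesic frame (so $\nabla^g_{e_i}e_j|_p=0$), which kills $\sum_i\nabla^g_{e_i}e_i$ at $p$ and reduces the computation to differentiating scalar functions of $f$ and applying (\ref{eq:3:7}) only through its non-vanishing gradient terms; then one verifies the identity at every point. Once the $g$-rough-Laplacian and $g$-curvature pieces have been assembled into $f^{2/(2-m)}J_g(V)$ and the leftover into $-f^{m/(2-m)}\overline{\nabla}_{\nabla^g f}V$, the lemma follows. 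I would also record, as a sanity check, that applying the identity to $V=$ a parallel section and to the constant case $f\equiv 1$ recovers $J_{\widetilde g}=J_g$, confirming the normalization.
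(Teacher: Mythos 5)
Your plan is correct and is exactly the direct computation the paper alludes to but omits: the curvature term scales by $f^{2/(2-m)}$, and in the rough-Laplacian term the scalar-derivative pieces $\alpha f^{2\alpha-1}\overline{\nabla}_{\nabla^g f}V$ (with $\alpha=-1/(m-2)$) from $\overline{\nabla}_{\widetilde{e_i}}\overline{\nabla}_{\widetilde{e_i}}V$ and from $\sum_i\nabla^{\widetilde g}_{\widetilde{e_i}}\widetilde{e_i}$ cancel, leaving only the traced Christoffel correction $\bigl(\tfrac{2}{m-2}-\tfrac{m}{m-2}\bigr)f^{-1}\nabla^g f=-f^{-1}\nabla^g f$, which yields the term $-f^{m/(2-m)}\overline{\nabla}_{\nabla^g f}V$. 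The exponent bookkeeping you flag as the main risk does close up as you predict, so the proposal goes through.
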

\begin{lemma} [cf. {\cite[pp.135-136]{YLOu}}]
  \label{lemma:3:3}
  For all $f\in C^{\infty}(M)$, 
  $V\in \Gamma(\varphi^{-1}TN)$, real numbers $p$ and $q$, 
  we have
  \begin{align}
    J_g(f V) &= (\Delta_g f) V-2\overline{\nabla}_{\nabla^g f}V+f J_g V,
    \label{eq:3:23}
    \\
    \nabla^g f^p &= p f^{p-1} \nabla^g f, 
    \label{eq:3:24}
    \\
    (\nabla^g f)f^q &= q f^{q-1} \vert \nabla^g f\vert_g{}^2,
    \label{eq:3:25}
    \\
    \Delta_g f^p &= p f^{p-1} \Delta_g f-p(p-1) f^{p-2} \vert\nabla^g f\vert_g{}^2.
    \label{eq:3:26}
  \end{align}
\end{lemma}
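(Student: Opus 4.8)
The plan is to establish the four displayed identities by direct pointwise computation. Fix an arbitrary point $x_0\in M$ and choose a local orthonormal frame $\{e_i\}_{i=1}^m$ on $(M,g)$ that is geodesic at $x_0$, i.e.\ $(\nabla^g_{e_i}e_j)(x_0)=0$ for all $i,j$; this makes the connection terms $\nabla^g_{e_i}e_i$ in the definitions of $\Delta_g$ and of the rough Laplacian $\overline\Delta_g$ vanish at $x_0$, so that every identity reduces at $x_0$ to the chain and Leibniz rules. Since $x_0$ is arbitrary, the identities then hold on all of $M$. I use the sign convention of the paper throughout: $\Delta_g$ is the nonnegative Laplacian $-\operatorname{div}\operatorname{grad}$ on functions, and $\overline\Delta_g=\overline\nabla^{\ast}\overline\nabla=-\sum_i\{\overline\nabla_{e_i}\overline\nabla_{e_i}-\overline\nabla_{\nabla^g_{e_i}e_i}\}$ on $\Gamma(\varphi^{-1}TN)$.

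Identities \eqref{eq:3:24} and \eqref{eq:3:25} are immediate from $e_i(f^p)=p f^{p-1}(e_if)$: summing against $e_i$ gives $\nabla^g f^p=p f^{p-1}\nabla^g f$, while pairing the vector field $\nabla^g f$ against $d(f^q)$ gives $(\nabla^g f)f^q=\sum_i(e_if)\,e_i(f^q)=q f^{q-1}\sum_i(e_if)^2=q f^{q-1}\vert\nabla^g f\vert_g^2$, using $\sum_i(e_if)^2=g(\nabla^g f,\nabla^g f)$. For \eqref{eq:3:26} one can either iterate: at $x_0$, $\Delta_g f^p=-\sum_i e_i(e_if^p)$ and $e_i(e_if^p)=p f^{p-1}e_i(e_if)+p(p-1)f^{p-2}(e_if)^2$, which on summing yields the claim; or invoke the chain rule for the geometer's Laplacian, $\Delta_g(F\circ f)=F'(f)\,\Delta_g f-F''(f)\,\vert\nabla^g f\vert_g^2$, with $F(t)=t^p$.

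The only identity with genuine content is \eqref{eq:3:23}, and I would prove it by splitting $J_g=\overline\Delta_g-\RR_g$. The curvature term is harmless: tensoriality of $R^{\!N}$ in its first argument gives $\RR_g(fV)=\sum_i R^{\!N}(fV,d\varphi(e_i))d\varphi(e_i)=f\,\RR_g V$. For the rough-Laplacian term, at $x_0$ one has $\overline\nabla_{e_i}(fV)=(e_if)V+f\,\overline\nabla_{e_i}V$, hence $\overline\nabla_{e_i}\overline\nabla_{e_i}(fV)=[e_i(e_if)]V+2(e_if)\,\overline\nabla_{e_i}V+f\,\overline\nabla_{e_i}\overline\nabla_{e_i}V$; summing over $i$ with the minus sign and recognizing $-\sum_i e_i(e_if)=\Delta_g f$, $\sum_i(e_if)\,\overline\nabla_{e_i}V=\overline\nabla_{\nabla^g f}V$ and $-\sum_i\overline\nabla_{e_i}\overline\nabla_{e_i}V=\overline\Delta_g V$ (all at $x_0$) gives $\overline\Delta_g(fV)=(\Delta_g f)V-2\overline\nabla_{\nabla^g f}V+f\,\overline\Delta_g V$. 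Subtracting $\RR_g(fV)=f\,\RR_g V$ produces \eqref{eq:3:23}.

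I do not expect a real obstacle here — the computations are routine Leibniz-rule manipulations. The one point that demands care, and the place a reader is most likely to slip, is the consistent bookkeeping of signs: one must commit to the nonnegative ($-\operatorname{div}\operatorname{grad}$) convention for both $\Delta_g$ and $\overline\Delta_g$, since with the opposite sign the lower-order terms on the right-hand sides of \eqref{eq:3:23} and \eqref{eq:3:26} change and would no longer feed correctly into the conformal-change formulas used in Sections 4 and 5. It is also worth noting that all four identities are purely local and use no compactness of $M$, so they remain valid in the non-compact settings considered later.
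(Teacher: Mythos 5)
Your proof is correct, and it is exactly the "direct computation" that the paper explicitly omits for Lemmas \ref{lemma:3:1}--\ref{lemma:3:4}: a geodesic-frame Leibniz/chain-rule calculation with the sign conventions $\Delta_g=-\operatorname{div}\operatorname{grad}$ and $\overline{\Delta}_g=\overline{\nabla}^{\ast}\overline{\nabla}$ that the paper uses (cf. (\ref{eq:5:1}) and (\ref{eq:3:15})), together with tensoriality of $\RR_g$ for (\ref{eq:3:23}). No gaps.
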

\begin{lemma}
  \label{lemma:3:4}
  The bienergy tension field 
  $\tau_2(\varphi:\widetilde{g},h)$ is given by 
  \begin{displaymath}
    \begin{aligned}
      &\tau_2(\varphi:\widetilde{g},h)
      \defeq J_{\widetilde{g}}(\tau_{\widetilde{g}}(\varphi))
      \\
      &=
      \left\{
        -\frac{4}{(2-m)^2} f^{2m/(2-m)} \vert \nabla^g f\vert_g^2
        +\frac{2}{2-m} f^{(2+m)/(2-m)} \Delta_g f
      \right\} 
      \tau_g(\varphi)
      \\
      &-\frac{6-m}{2-m} f^{(2+m)/(2-m)}
       \overline{\nabla}_{\nabla^g f}\tau_g(\varphi)
      +f^{4/(2-m)} J_g(\tau_g(\varphi))
      \\
      &+
      \left\{
        -\frac{m^2}{(2-m)^2} 
        f^{(-2+3m)/(2-m)} \vert\nabla^g f\vert_g^2
        +\frac{m}{2-m} f^{2m/(2-m)} \Delta_g f
      \right\}
       \varphi_{\ast}(\nabla^g f)
      \\
      &-\frac{2+m}{2-m} f^{2m/(2-m)} \overline{\nabla}_{\nabla^g f}\varphi_{\ast}(\nabla^g f)
      +f^{(2+m)/(2-m)} J_g
      (\varphi_{\ast}(\nabla^g f)).
    \end{aligned}
  \end{displaymath}
\end{lemma}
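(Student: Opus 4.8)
The plan is to compute $\tau_2(\varphi:\widetilde g,h)=J_{\widetilde g}(\tau_{\widetilde g}(\varphi))$ by combining the three preparatory results already at hand: the expression $\tau_{\widetilde g}(\varphi)=f^{2/(2-m)}\{\tau_g(\varphi)+f^{-1}\varphi_\ast(\nabla^g f)\}$ from \eqref{eq:3:8-10}, the transformation rule for the Jacobi operator in Lemma \ref{lemma:3:2}, and the algebraic identities of Lemma \ref{lemma:3:3}. Concretely, I would write $\tau_{\widetilde g}(\varphi)=f^{2/(2-m)}\tau_g(\varphi)+f^{m/(2-m)}\varphi_\ast(\nabla^g f)$ (using $f^{2/(2-m)}\cdot f^{-1}=f^{(2-(m-2))/(2-m)}=f^{m/(2-m)}$) and then apply $J_{\widetilde g}$ to each of the two summands separately, since $J_{\widetilde g}$ is $\R$-linear.

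For each summand, first use Lemma \ref{lemma:3:2} to replace $J_{\widetilde g}$ by $f^{2/(2-m)}J_g-f^{m/(2-m)}\overline\nabla_{\nabla^g f}$, then use Lemma \ref{lemma:3:3} to expand $J_g$ applied to a product $f^p\cdot W$ (with $W=\tau_g(\varphi)$, $p=2/(2-m)$ for the first term, and $W=\varphi_\ast(\nabla^g f)$, $p=m/(2-m)$ for the second). The identity \eqref{eq:3:23} gives $J_g(f^pW)=(\Delta_g f^p)W-2\overline\nabla_{\nabla^g f^p}W+f^pJ_gW$, and then \eqref{eq:3:24} and \eqref{eq:3:26} turn $\nabla^g f^p$ and $\Delta_g f^p$ into expressions in $\nabla^g f$, $\Delta_g f$ and $|\nabla^g f|_g^2$ with explicit powers of $f$. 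The term $-f^{m/(2-m)}\overline\nabla_{\nabla^g f}(f^pW)$ is handled by the Leibniz rule for the induced connection, $\overline\nabla_{\nabla^g f}(f^pW)=(\nabla^g f\cdot f^p)W+f^p\overline\nabla_{\nabla^g f}W$, and \eqref{eq:3:25} rewrites $(\nabla^g f)f^p=pf^{p-1}|\nabla^g f|_g^2$. After this, everything is a sum of scalar multiples of $\tau_g(\varphi)$, $\overline\nabla_{\nabla^g f}\tau_g(\varphi)$, $J_g(\tau_g(\varphi))$, $\varphi_\ast(\nabla^g f)$, $\overline\nabla_{\nabla^g f}\varphi_\ast(\nabla^g f)$, and $J_g(\varphi_\ast(\nabla^g f))$, and one only has to collect coefficients.

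The main obstacle is purely bookkeeping: keeping the exponents of $f$ straight through repeated use of \eqref{eq:3:24}--\eqref{eq:3:26}, and correctly merging the contributions coming from the two summands and from the two pieces ($f^{2/(2-m)}J_g$ versus $-f^{m/(2-m)}\overline\nabla_{\nabla^g f}$) of $J_{\widetilde g}$. In particular, the coefficient of $\tau_g(\varphi)$ receives a $\Delta_g f^p$ contribution and a $|\nabla^g f|_g^2$ contribution from the first summand only; the coefficient of $\overline\nabla_{\nabla^g f}\tau_g(\varphi)$ collects the $-2$ from \eqref{eq:3:23} (scaled) together with the $-f^{m/(2-m)}\cdot f^p$ cross term, giving the stated factor $-\tfrac{6-m}{2-m}$; the coefficient of $\varphi_\ast(\nabla^g f)$ similarly combines a $|\nabla^g f|_g^2$ piece from $\Delta_g f^q$ in the second summand with a $|\nabla^g f|_g^2$ piece arising when $\overline\nabla_{\nabla^g f}$ hits the power of $f$ multiplying $\varphi_\ast(\nabla^g f)$; and the coefficient of $\overline\nabla_{\nabla^g f}\varphi_\ast(\nabla^g f)$ combines $-2$ (from \eqref{eq:3:23} on the second summand) with the cross term to yield $-\tfrac{2+m}{2-m}$. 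A careful check that, say, $f^{2/(2-m)}\cdot f^{2/(2-m)}=f^{4/(2-m)}$ and $f^{2/(2-m)}\cdot f^{m/(2-m)}=f^{(2+m)/(2-m)}$ and $f^{m/(2-m)}\cdot f^{m/(2-m)}=f^{2m/(2-m)}$ confirms the six displayed powers; once the exponents are verified, the coefficients follow by elementary arithmetic with $p=2/(2-m)$, $q=m/(2-m)$, $p-1=m/(2-m)$, and $p(p-1)=2m/(2-m)^2$.
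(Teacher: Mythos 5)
Your proposal is correct and matches the paper's (omitted) argument: the paper states that Lemma \ref{lemma:3:4} follows ``by a direct computation'' from (\ref{eq:3:8-10}), Lemma \ref{lemma:3:2} and Lemma \ref{lemma:3:3}, which is precisely the computation you outline, and the coefficient bookkeeping you describe (e.g.\ $-2p-1=-\tfrac{6-m}{2-m}$ for the $\overline\nabla_{\nabla^g f}\tau_g(\varphi)$ term and $-q(q-1)-q=-q^2=-\tfrac{m^2}{(2-m)^2}$ for the $\varphi_\ast(\nabla^g f)$ term, with $p=\tfrac{2}{2-m}$, $q=\tfrac{m}{2-m}$) checks out against the stated formula. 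The only blemish is the harmless typo $f^{2/(2-m)}\cdot f^{-1}=f^{(2-(m-2))/(2-m)}$, which should read $f^{(2-(2-m))/(2-m)}=f^{m/(2-m)}$, as your final exponent already indicates.
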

By a direct computation, we have Lemmas \ref{lemma:3:1}, \ref{lemma:3:2}, \ref{lemma:3:3} and \ref{lemma:3:4}.
The proofs are omitted.
Thus, we have immediately 
\begin{corollary} 
  \label{corollary:3:5}
  The bienergy tension field is 
  \begin{displaymath}
    \begin{aligned}
      &f^{2m/(m-2)} \tau_2(\varphi:\widetilde{g},h)
      =
      \left\{
        -\frac{4}{(m-2)^2} \vert \nabla^g f\vert_g{}^2
        -\frac{2}{m-2} f \Delta_g f
      \right\}
      \tau_g(\varphi)
      \\
      &-\frac{m-6}{m-2} f \overline{\nabla}_{\nabla^g f}\tau_g(\varphi) +f^2 J_g(\tau_g(\varphi))
      \\
      &+f^{-1}
      \left\{
        -\frac{m^2}{(m-2)^2} 
        \vert\nabla^g f\vert_g{}^2
        -\frac{m}{m-2} f \Delta_g f
      \right\}
      \varphi_{\ast}(\nabla^g f)
      \\
      &+\frac{m+2}{m-2}
      \overline{\nabla}_{\nabla^g f}\varphi_{\ast}(\nabla^g f)
      +f J_g (\varphi_{\ast}(\nabla^g f)).
    \end{aligned}
  \end{displaymath}
\end{corollary}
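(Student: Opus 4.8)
The plan is to read off Corollary \ref{corollary:3:5} from Lemma \ref{lemma:3:4} by rescaling. Since $f$ is a positive $C^{\infty}$ function, multiplication by $f^{2m/(m-2)}=f^{-2m/(2-m)}$ is a well-defined operation on $\Gamma(\varphi^{-1}TN)$, and because it is multiplication by a scalar function it commutes with every term on the right-hand side of Lemma \ref{lemma:3:4}; thus I would simply carry the factor $f^{-2m/(2-m)}$ inside each of the six terms there. All the remaining work is the bookkeeping of normalizing the powers of $f$ and of rewriting the rational coefficients with denominator $m-2$ instead of $2-m$.

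For the powers: the exponents of $f$ occurring in Lemma \ref{lemma:3:4} are $2m/(2-m)$, $(2+m)/(2-m)$, $4/(2-m)$ and $(-2+3m)/(2-m)$. Adding $-2m/(2-m)$ and using $(2+m)-2m=2-m$, $\;4-2m=2(2-m)$, $\;(-2+3m)-2m=-(2-m)$, these become
\[
  f^{2m/(2-m)}\mapsto 1,\quad
  f^{(2+m)/(2-m)}\mapsto f,\quad
  f^{4/(2-m)}\mapsto f^{2},\quad
  f^{(-2+3m)/(2-m)}\mapsto f^{-1}.
\]
Substituting these into Lemma \ref{lemma:3:4} already produces an expression of exactly the shape claimed in the corollary: a $\tau_g(\varphi)$-term, an $\overline{\nabla}_{\nabla^g f}\tau_g(\varphi)$-term, the term $f^{2}J_g(\tau_g(\varphi))$, and then three $\varphi_{\ast}(\nabla^g f)$-terms in the same order, the last two of which still carry a factor $f^{-1}$ (this is why the corollary groups them as $f^{-1}\{\cdots\}$, which is purely cosmetic).

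Finally I would rewrite the coefficients with denominator $m-2$. Those attached to $(2-m)^2$ are unchanged, e.g. $4/(2-m)^2=4/(m-2)^2$ and $m^2/(2-m)^2=m^2/(m-2)^2$, while the ones with a single $2-m$ in the denominator are handled by $\frac{2}{2-m}=-\frac{2}{m-2}$, $\;\frac{6-m}{2-m}=\frac{m-6}{m-2}$, $\;\frac{m}{2-m}=-\frac{m}{m-2}$, $\;\frac{2+m}{2-m}=-\frac{m+2}{m-2}$; inserting these gives precisely the displayed identity. There is no genuine obstacle here — the argument is pure computation assuming Lemma \ref{lemma:3:4} — but the one place where a slip is easy, and the step I would recheck explicitly, is the sign bookkeeping in these coefficients, in particular distinguishing the cases where only the denominator flips sign from the one ($\frac{6-m}{2-m}$) where numerator and denominator flip together.
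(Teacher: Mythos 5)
Your proposal is correct and is exactly what the paper does: the authors state that Corollary \ref{corollary:3:5} follows ``immediately'' from Lemma \ref{lemma:3:4}, i.e.\ by multiplying through by $f^{2m/(m-2)}=f^{-2m/(2-m)}$ and normalizing the exponents and the signs of the rational coefficients. Your exponent arithmetic and sign checks (including the one subtle case $-\tfrac{6-m}{2-m}=-\tfrac{m-6}{m-2}$) all agree with the displayed identity.
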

Therefore, we have also
\begin{corollary} 
  \label{corollary:3:6}
  $\varphi:(M,\widetilde{g})\rightarrow (N,h)$ 
  is biharmonic if and only if 
  \begin{equation}
    \label{eq:3:30}
    \begin{aligned}
      &\tau_2(\varphi:\widetilde{g},h)
      =0
      \\
      &\Longleftrightarrow
      \\
      &
      \left\{
        -\frac{4}{(m-2)^2} \vert \nabla^g f\vert_g{}^2
        -\frac{2}{m-2} f \Delta_g f
      \right\}
      f\tau_g(\varphi)
      \\
      &\quad
      -\frac{m-6}{m-2} f^2 \overline{\nabla}_{\nabla^g f}\tau_g(\varphi)
      +f^3 J_g(\tau_g(\varphi))
      \\
      &\quad\quad+
      \left\{
        -\frac{m^2}{(m-2)^2} 
        \vert\nabla^g f\vert_g{}^2
        -\frac{m}{m-2} f \Delta_g f
      \right\}
      \varphi_{\ast}(\nabla^g f)
      \\
      &\qquad\quad+\frac{m+2}{m-2} f 
      \overline{\nabla}_{\nabla^g f}\varphi_{\ast}(\nabla^g f)
      +f^2 J_g
      (\varphi_{\ast}(\nabla^g f))
      =0.
    \end{aligned}
  \end{equation}
\end{corollary}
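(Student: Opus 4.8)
The plan is to obtain Corollary~\ref{corollary:3:6} as an immediate consequence of Corollary~\ref{corollary:3:5} together with the definition of biharmonicity. By Definition~\ref{definition:preliminaries:2}, the map $\varphi\colon(M,\widetilde{g})\rightarrow(N,h)$ is biharmonic if and only if the section $\tau_2(\varphi:\widetilde{g},h)\in\Gamma(\varphi^{-1}TN)$ vanishes identically.

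First I would observe that, since $f$ is a \emph{positive} $C^{\infty}$ function on $M$, every power $f^{p}$ is a nowhere-vanishing function; in particular $f^{2m/(m-2)}$ and $f^{(3m-2)/(m-2)}=f\cdot f^{2m/(m-2)}$ are strictly positive. Multiplying a section of $\varphi^{-1}TN$ by such a scalar function does not change its zero set, so $\tau_2(\varphi:\widetilde{g},h)=0$ is equivalent to $f\cdot\bigl(f^{2m/(m-2)}\,\tau_2(\varphi:\widetilde{g},h)\bigr)=0$.

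Then I would substitute into this the expression for $f^{2m/(m-2)}\,\tau_2(\varphi:\widetilde{g},h)$ furnished by Corollary~\ref{corollary:3:5} and distribute the extra factor $f$ over the six terms of its right-hand side: the coefficients of $\tau_g(\varphi)$ and of $\varphi_{\ast}(\nabla^g f)$ are each multiplied by $f$ (so the $f^{-1}$ in front of the $\varphi_{\ast}(\nabla^g f)$-coefficient disappears), the term $-\tfrac{m-6}{m-2}f\,\overline{\nabla}_{\nabla^g f}\tau_g(\varphi)$ becomes $-\tfrac{m-6}{m-2}f^{2}\,\overline{\nabla}_{\nabla^g f}\tau_g(\varphi)$, the term $f^{2}J_g(\tau_g(\varphi))$ becomes $f^{3}J_g(\tau_g(\varphi))$, and likewise the last two terms each gain one power of $f$. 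Setting the resulting identity equal to zero is precisely~(\ref{eq:3:30}), which proves the corollary.

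Since Lemma~\ref{lemma:3:3}, the formula~(\ref{eq:3:17}) for $J_{\widetilde{g}}$, and Lemma~\ref{lemma:3:4}/Corollary~\ref{corollary:3:5} are already in hand, there is essentially no obstacle here; the only point requiring care is the bookkeeping of the exponents of $f$ and the elementary remark that a tension-field equation may be multiplied by a strictly positive scalar function without affecting its solution set. (Had Corollary~\ref{corollary:3:5} not been available, the real labor would lie in the direct computation yielding Lemma~\ref{lemma:3:4}, combining~(\ref{eq:3:8-10}), the formula~(\ref{eq:3:17}) for the Jacobi operator, and the product rules of Lemma~\ref{lemma:3:3}.)
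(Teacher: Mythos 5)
Your proposal is correct and matches the paper's own (essentially unstated) derivation: the paper presents Corollary \ref{corollary:3:6} as an immediate consequence of Corollary \ref{corollary:3:5}, obtained exactly as you describe by multiplying the identity of Corollary \ref{corollary:3:5} through by $f$ and noting that the strictly positive factor $f^{(3m-2)/(m-2)}$ does not change the zero set of $\tau_2(\varphi:\widetilde{g},h)$. Your exponent bookkeeping on each of the six terms checks out against (\ref{eq:3:30}).
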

\section{Reduction of constructing proper biharmonic maps}
\label{sec:4}
In this section, we formulate our problem to construct proper biharmonic maps. A biharmonic map is said to be {\itshape proper} if it is not harmonic. 
Let $(M,g)$, $(N,h)$ be two compact Riemannian manifolds. 
In the following we always assume that $m=\dim (M)\leq 3$. 
Eells and Ferreira \cite{EF} showed that, 
\begin{quote}
  \itshape
  for each homotopy class ${\mathcal H}$ in $C^{\infty}(M,N)$, 
  there exist a Riemannian metric $\widetilde{g}$ which is conformal to $g$, 
  and a $C^{\infty}$ map $\varphi\in {\mathcal H}$ such that 
  $\varphi$ is a harmonic map from $(M,\widetilde{g})$ into $(N,h)$.
\end{quote}
We do not assume, in general, that $M$ and $N$ are compact. 
Let us consider the following problem. 
\begin{quote}
  \vspace{-1\baselineskip}
  \begin{problem}
    \label{problem:1}
    For each homotopy class ${\mathcal H}$ in $C^{\infty}(M,N)$, 
    do there exist a Riemannian metric $\widetilde{g}$ which is conformal to $g$, 
    and a $C^{\infty}$ map in ${\mathcal H}$ such that 
    $\varphi \colon (M,\widetilde{g}) \rightarrow (N,h)$ is a {\em proper biharmonic} map, that is, 
    $\tau_2(\varphi,\widetilde{g},h)=0$ and $\tau(\varphi,\widetilde{g},h) \not=0$ ? 
  \end{problem}
\end{quote}
By regarding the above Eells and Ferreira's result, 
we fix a harmonic map $\varphi: (M,g)\rightarrow (N,h)$, that is, $\tau(\varphi)=0$. 
Then, let us consider the following problem: 
\begin{quote}
  \vspace{-1\baselineskip}
  \begin{problem}
    \label{problem:2}
    Does there exist a positive $C^{\infty}$ function $f$ on $M$ such that, 
    for $\widetilde{g}=f^{2/(m-2)} g$, 
    $\varphi \colon (M,\widetilde{g})\rightarrow (N,h)$ is {\em proper biharmonic}, that is, 
    $\tau_2(\varphi,\widetilde{g},h)=0$ and 
    \newline
    $\tau(\varphi,\widetilde{g},h)\not=0$.
  \end{problem}
\end{quote} 
\par
To concern Problem \ref{problem:2}, we have
\begin{theorem} 
  \label{theorem:4:1}
  Assume that $\varphi \colon (M,g)\rightarrow (N,h)$ is harmonic. 
  For a positive $C^{\infty}$ function $f$ on $M$, 
  let us define $\widetilde{g}=f^{2/(m-2)} g$, 
  a Riemannian metric conformal to $g$. 
  Then, 
  \begin{enumerate}
  \item 
    \label{item:theorem:4:1:1}
    $\varphi \colon (M,\widetilde{g})\rightarrow (N,h)$ is harmonic 
    if and only if $\varphi_{\ast}(\nabla^g f)=0$.
  \item 
    \label{item:theorem:4:1:2}
    $\varphi \colon (M,\widetilde{g})\rightarrow (N,h)$ is biharmonic if and only if the following holds: 
    \begin{equation}
      \label{eq:4:1}
      \begin{aligned}
        -&\left\{
          \frac{m^2}{(m-2)^2} \vert\nabla^g f\vert_g{}^2+\frac{m}{m-2} f (\Delta_g f)
        \right\}
         \varphi_{\ast}(\nabla^g f)
        \\
        &+\frac{m+2}{m-2} f \overline{\nabla}_{\nabla^g f} \varphi_{\ast}(\nabla^g f)+f^2 J_g(\varphi_{\ast}(\nabla^g f))=0.
      \end{aligned}
    \end{equation}
  \end{enumerate}
\end{theorem}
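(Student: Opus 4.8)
The plan is to derive both parts by plugging the harmonicity hypothesis $\tau_g(\varphi)=0$ into the conformal transformation formulas already established in Section~3, so that no new geometric input is required.

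For part~\eqref{item:theorem:4:1:1}, I would start from the identity \eqref{eq:3:13} of Lemma~\ref{lemma:3:1},
\begin{displaymath}
  \tau(\varphi:\widetilde g,h)=f^{2/(2-m)}\bigl\{\tau(\varphi:g,h)+\varphi_{\ast}(\nabla^g\log f)\bigr\}.
\end{displaymath}
Since $\varphi$ is harmonic with respect to $g$, the first term in the braces vanishes, and since $\nabla^g\log f=f^{-1}\nabla^g f$ we are left with $\tau(\varphi:\widetilde g,h)=f^{m/(2-m)}\,\varphi_{\ast}(\nabla^g f)$ (equivalently, this is \eqref{eq:3:14} with ${\Div}_g(f\,d\varphi)=f\tau_g(\varphi)+\varphi_{\ast}(\nabla^g f)=\varphi_{\ast}(\nabla^g f)$). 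Because $f>0$ everywhere, the scalar prefactor $f^{m/(2-m)}$ never vanishes, so $\tau(\varphi:\widetilde g,h)=0$ if and only if $\varphi_{\ast}(\nabla^g f)=0$, which is \eqref{item:theorem:4:1:1}.

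For part~\eqref{item:theorem:4:1:2}, I would invoke Corollary~\ref{corollary:3:6}: the map $\varphi\colon(M,\widetilde g)\to(N,h)$ is biharmonic if and only if the displayed sum in \eqref{eq:3:30} vanishes. That sum splits into the terms built from $\tau_g(\varphi)$ --- namely $\{\cdots\}\,f\tau_g(\varphi)$, $-\tfrac{m-6}{m-2}f^2\,\overline{\nabla}_{\nabla^g f}\tau_g(\varphi)$, and $f^3 J_g(\tau_g(\varphi))$ --- and the terms built from $\varphi_{\ast}(\nabla^g f)$. Under the standing assumption $\tau_g(\varphi)=0$, the section $\tau_g(\varphi)$ is identically zero, hence so are all of its induced covariant derivatives and its image under the Jacobi operator $J_g$ (which is linear and acts through first derivatives); thus the entire first group drops out. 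What survives is exactly
\begin{displaymath}
  -\Bigl\{\tfrac{m^2}{(m-2)^2}\vert\nabla^g f\vert_g{}^2+\tfrac{m}{m-2}f\,\Delta_g f\Bigr\}\varphi_{\ast}(\nabla^g f)
  +\tfrac{m+2}{m-2}f\,\overline{\nabla}_{\nabla^g f}\varphi_{\ast}(\nabla^g f)+f^2 J_g(\varphi_{\ast}(\nabla^g f))=0,
\end{displaymath}
which is \eqref{eq:4:1}, giving \eqref{item:theorem:4:1:2}.

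There is no genuine obstacle here; the one point deserving a moment of care is the claim that the $\overline{\nabla}_{\nabla^g f}\tau_g(\varphi)$ and $J_g(\tau_g(\varphi))$ terms vanish. This uses that $\tau_g(\varphi)\equiv0$ as a section of $\varphi^{-1}TN$ --- not merely at an isolated point --- so that differentiating it along any field still yields the zero section; under the blanket hypothesis that $\varphi$ is harmonic this is immediate. Beyond that, the argument is a pure bookkeeping substitution into Lemma~\ref{lemma:3:1} and Corollary~\ref{corollary:3:6}, with no estimates, limiting arguments, or further curvature computations.
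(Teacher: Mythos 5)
Your proposal follows the paper's own proof exactly: part (1) is obtained by substituting $\tau_g(\varphi)=0$ into \eqref{eq:3:13}--\eqref{eq:3:14} of Lemma \ref{lemma:3:1} and using positivity of $f$, and part (2) by substituting $\tau_g(\varphi)=0$ into \eqref{eq:3:30} of Corollary \ref{corollary:3:6}. The argument is correct; your added remark that the covariant derivative and Jacobi-operator terms vanish because $\tau_g(\varphi)$ vanishes identically as a section is a reasonable point of care that the paper leaves implicit.
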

\begin{proof}
  For (\ref{item:theorem:4:1:1}),
  due to (\ref{eq:3:13}) or (\ref{eq:3:14}) in Lemma \ref{lemma:3:1}, we have 
  \begin{equation}
    \label{eq:4:2}
    \tau(\varphi:\widetilde{g},h)=f^{2/(2-m)} \tau(\varphi:g,h)+f^{m/(2-m)} \varphi_{\ast}(\nabla^g f),
  \end{equation}
  which implies that (\ref{item:theorem:4:1:1}).
  For (\ref{item:theorem:4:1:2}), 
  in Corollary \ref{corollary:3:6}, substituting $\tau_g(\varphi)=0$ into (\ref{eq:3:30}), we have 
  immediately (\ref{eq:4:1}). 
\end{proof}
We have immediately 
\begin{corollary}
  \label{corollary:4:2}
  Let $\varphi=\id \colon (M,g) \rightarrow (M,g)$ be the identity map. 
  For a positive $C^{\infty}$ function $f$ on $M$, 
  let us define $\widetilde{g}=f^{2/(m-2)} g$. 
  Then, 
  \begin{enumerate}
  \item 
    \label{item:corollary:4:2:1}
    $\varphi=\id \colon (M,\widetilde{g})\rightarrow (M,g)$ is harmonic 
    if and only if $f$ is a constant.
  \item 
    \label{item:corollary:4:2:2}
    $\varphi=\id \colon (M,\widetilde{g})\rightarrow (M,g)$ is biharmonic if and only if 
    \begin{equation}
      \label{eq:4:3}
      \begin{aligned}
        -&\left\{
          \frac{m^2}{(m-2)^2} \vert\nabla^g f\vert_g{}^2+\frac{m}{m-2} f (\Delta_g f)
        \right\}
         \nabla^g f
        \\
        &+\frac{m+2}{m-2} f \overline{\nabla}_{\nabla^g f} \nabla^g f+f^2 J_g(\nabla^g f)=0, 
      \end{aligned}
    \end{equation}
    which is equivalent to 
    \begin{equation}
      \label{eq:4:4}
      \begin{aligned}
        -&\left\{
          \frac{m^2}{(m-2)^2} \vert X \vert_g{}^2+\frac{m}{m-2} f (\Delta_g f)
        \right\}
         X
        \\
        &+\frac{m+2}{m-2} f {\nabla}_X X
        +f^2 (\overline{\Delta}^g(X)-\rho(X))=0,
      \end{aligned}
    \end{equation}
    where $X=\nabla^g f\in \X(M)$, $\rho(X) \defeq 
    \sum_{i=1}^m R^g(X,e_i)e_i$, is 
    the Ricci tensor of $(M,g)$, and 
    $\overline{\Delta}^g(X) \defeq -\sum_{i=1}^m(\nabla^g_{e_i}\nabla^g_{e_i}X-\nabla^g_{\nabla^g_{e_i}e_i}X)$ 
    is the rough Laplacian on $\X(M)$, respectively. 
  \end{enumerate}
\end{corollary}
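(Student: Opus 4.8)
The plan is to specialize Theorem~\ref{theorem:4:1} to the case $\varphi=\id\colon(M,g)\to(M,g)$, i.e.\ $N=M$ and $h=g$. First I would record that the identity map is totally geodesic, hence harmonic ($\tau(\id)=0$), so the standing hypothesis of Theorem~\ref{theorem:4:1} is met and both equivalences there apply verbatim; moreover, since $\varphi=\id$, the differential $\varphi_{\ast}$ is the identity endomorphism of $TM$, so $\varphi_{\ast}(\nabla^g f)=\nabla^g f$ everywhere. Part~(\ref{item:corollary:4:2:1}) is then immediate from Theorem~\ref{theorem:4:1}(\ref{item:theorem:4:1:1}): $\id\colon(M,\widetilde{g})\to(M,g)$ is harmonic iff $\varphi_{\ast}(\nabla^g f)=\nabla^g f=0$, i.e.\ iff $df\equiv 0$, i.e.\ iff $f$ is locally constant (hence constant when $M$ is connected).

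For part~(\ref{item:corollary:4:2:2}) I would first substitute $\varphi_{\ast}(\nabla^g f)=\nabla^g f$ into equation (\ref{eq:4:1}) of Theorem~\ref{theorem:4:1}(\ref{item:theorem:4:1:2}), which produces (\ref{eq:4:3}) directly. It then remains only to rewrite (\ref{eq:4:3}) in the form (\ref{eq:4:4}) with $X\defeq\nabla^g f$. For this I would use that, when $\varphi=\id$, the pullback bundle $\varphi^{-1}TN$ is just $TM$, the induced connection $\overline{\nabla}$ on it is the Levi-Civita connection $\nabla^g$, $d\varphi(e_i)=e_i$, and $R^N=R^g$. Consequently $\overline{\nabla}_{\nabla^g f}\nabla^g f=\nabla^g_X X$, $\vert\nabla^g f\vert_g^2=\vert X\vert_g^2$, and $J_g(\nabla^g f)=\overline{\Delta}_g(\nabla^g f)-\RR_g(\nabla^g f)=\overline{\Delta}^g(X)-\rho(X)$, where the last equality is exactly the reduction of the curvature term $\RR_g$ to the Ricci contraction $\rho(X)=\sum_{i=1}^m R^g(X,e_i)e_i$ in the identity case, and $\overline{\Delta}^g$ is the rough Laplacian on $\X(M)$ as in the statement. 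Substituting these identifications into (\ref{eq:4:3}) yields (\ref{eq:4:4}), and since every step is reversible the two equations are equivalent.

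There is no genuine obstacle: Corollary~\ref{corollary:4:2} is, as the surrounding text asserts, an immediate consequence of Theorem~\ref{theorem:4:1}. The only points that deserve a moment's care are checking that the connection induced on $\varphi^{-1}TM$ really coincides with $\nabla^g$ (so that $\overline{\Delta}_g$ agrees with the rough Laplacian $\overline{\Delta}^g$ of (\ref{eq:4:4})), and matching the sign conventions of the rough Laplacian and of the Ricci operator $\rho$ with those fixed in (\ref{eq:preliminaries:4}) and in the statement of the corollary.
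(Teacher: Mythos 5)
Your proposal is correct and follows the same route as the paper: the paper's own (very short) proof likewise derives (\ref{eq:4:3}) and (\ref{eq:4:4}) by specializing (\ref{eq:4:1}) of Theorem \ref{theorem:4:1} to the identity map and using $J_g(V)=\overline{\Delta}^g(V)-\rho(V)$ for $V\in\X(M)$. Your write-up simply makes explicit the identifications ($\varphi_{\ast}=\mathrm{id}$, $\overline{\nabla}=\nabla^g$, $\RR_g=\rho$) that the paper leaves implicit.
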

\begin{proof}
  (\ref{eq:4:3}) and (\ref{eq:4:4}) follow from (\ref{eq:4:1}), 
  and the formula $J_g(V)=\overline{\Delta}^g(V)-\rho(V)$ $(V\in \X(M))$, 
  for the identity map. 
\end{proof}
\section{The identity map of the Euclidean space} 
Let us consider the $m$ dimensional Euclidean space 
$(M,g)=(\R^m,g_0)$ with the standard coordinate $(x_1,\cdots,x_m)$ $(m\geq 3)$.  
In this case, 
let us take a positive $C^{\infty}$ function 
$f=f(x_1,\cdots,x_m)\in C^{\infty}(\R^m)$. 
Let $X=\nabla^g f=\sum_{i=1}^m f_{x_i}\frac{\partial}{\partial x_i}$, 
where we denote 
$f_{x_i}=\frac{\partial f}{\partial x_i}$. 
Then, since
\begin{equation}
  \label{eq:5:1}
  \left\{
    \begin{aligned}
      \rho&=0,\\
      \Delta_g f&=-\sum_{i=1}^m f_{x_ix_i},\\
      \vert X\vert_g{}^2&=\sum_{i=1}^m f_{x_i}{}^2, \\
      \nabla^g_X X&=\sum_{i=1}^m\bigg\{
      \sum_{j=1}^m f_{x_j} f_{x_ix_j}
      \bigg\}\frac{\partial}{\partial x_i},\\ 
      \overline{\Delta}^g(X)&=
      \sum_{j=1}^m \Delta_g(f_{x_j}) \frac{\partial}{\partial x_j}, 
    \end{aligned}
  \right.
\end{equation}
the equation (\ref{eq:4:3}) is reduced to the following:
\begin{equation}
  \label{eq:5:2}
  \begin{aligned}
    -&\left\{
      \frac{m^2}{(m-2)^2} \sum_{i=1}^m f_{x_i}{}^2
      +\frac{m}{m-2} f (\Delta_g f) f_{x_j}
    \right\}
    \\
    &+f^2 \Delta_g(f_{x_j})+\frac{m+2}{m-2} 
    \sum_{i=1}^m f_{x_i} f_{x_ix_j}=0
    \quad (\forall  j=1,\cdots,m).
  \end{aligned}
\end{equation}
If we consider 
$f=f(x_1,\cdots,x_m)=f(x)$, $x=x_1$, then, 
the equation (\ref{eq:5:2}) is equivalent to 
the following ODE: 
\begin{equation}
  \tag{\ref{eq:0}}
  f^2 f'''-2 \frac{m+1}{m-2} f f' f''
  +\frac{m^2}{(m-2)^2} (f')^3=0.
\end{equation}
In the cases $m=3$, $m=4$, 
(\ref{eq:0}) becomes 
\begin{alignat}{3}
  f^2 f'''-8 f f' f''+9 (f')^3&=0&\qquad&(m=3),
  \label{eq:5:4}
  \\
  f^2 f'''-5 f f' f''+4 (f')^3&=0&\qquad&(m=4).
  \label{eq:5:5}
\end{alignat}
\par
Our problem is reduced to find a positive $C^{\infty}$ solution of (\ref{eq:0}). 
In order to analyze (\ref{eq:0}), we put  $u=f'/f$, 
then the equation (\ref{eq:0}) is reduced to the equation:
\begin{equation}
  \label{eq:5:6}
  u''+\frac{m-8}{m-2} u u'-\frac{2(m-4)}{(m-2)^2} u^3=0. 
\end{equation}
Then, we obtain immediately
\begin{proposition}
  \label{proposition:5:1}
  If a positive $C^{\infty}$ solution $f$ of (\ref{eq:0}) on $\R$, 
  then 
  $u= f'/f$ satisfies (\ref{eq:5:6}). 
  Conversely, for every $C^{\infty}$ solution  
  $u$ of (\ref{eq:5:6}) on $\R$, then 
  \begin{math}
    f(t) = C \exp\left(\int^t u(s) \,ds\right)
  \end{math}
  is a positive solution of (\ref{eq:0})
  for every positive constant $C$. 
\end{proposition}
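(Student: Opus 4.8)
The plan is to verify directly that the Cole-Hopf substitution $u=f'/f$ turns (\ref{eq:0}) into (\ref{eq:5:6}) and, conversely, that antidifferentiating a solution $u$ recovers a solution of (\ref{eq:0}). First I would express the derivatives of a positive $f$ through $u$: differentiating $f'=uf$ twice gives $f''=(u'+u^2)f$ and $f'''=(u''+3uu'+u^3)f$. Substituting these, together with $(f')^3=u^3f^3$ and $ff'f''=(uu'+u^3)f^3$, into the left-hand side of (\ref{eq:0}) factors out $f^3$ and leaves a polynomial in $u,u',u''$; since $f>0$, equation (\ref{eq:0}) is equivalent on $\R$ to the vanishing of that polynomial. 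A routine simplification of the coefficients then identifies it with the left-hand side of (\ref{eq:5:6}): the coefficient of $uu'$ is $3-\tfrac{2(m+1)}{m-2}=\tfrac{m-8}{m-2}$, and the coefficient of $u^3$ has numerator $(m-2)^2-2(m+1)(m-2)+m^2=-2(m-4)$, so it equals $-\tfrac{2(m-4)}{(m-2)^2}$.

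For the converse I would start from a $C^\infty$ solution $u$ of (\ref{eq:5:6}) on $\R$ and set $f(t)=C\exp\bigl(\int^t u(s)\,ds\bigr)$ with $C>0$. This $f$ is smooth, strictly positive on $\R$, and satisfies $f'/f=u$; running the previous computation backwards — multiply (\ref{eq:5:6}) by $f^3$ and re-assemble the expressions for $f'$, $f''$, $f'''$ — shows that $f$ solves (\ref{eq:0}). Since $C>0$ is arbitrary, this yields the whole family claimed in the statement (the free lower limit of integration only rescales $C$, so no further ambiguity enters).

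There is essentially no obstacle here: the proof is a bounded computation. The only steps that deserve a moment's care are the algebraic cancellation producing the $u^3$-coefficient $-\tfrac{2(m-4)}{(m-2)^2}$, and the observation that, because we restrict throughout to $f>0$, the substitution $u=f'/f$ is genuinely reversible, so that no solutions are created or destroyed in passing between (\ref{eq:0}) and (\ref{eq:5:6}) in either direction.
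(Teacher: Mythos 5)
Your computation is correct and is exactly the direct verification the paper has in mind: the authors state the proposition with ``we obtain immediately'' and omit the calculation, which is precisely the Cole--Hopf substitution $f'=uf$, $f''=(u'+u^2)f$, $f'''=(u''+3uu'+u^3)f$ that you carry out, and your coefficient simplifications ($3-\tfrac{2(m+1)}{m-2}=\tfrac{m-8}{m-2}$ and $(m-2)^2-2(m+1)(m-2)+m^2=-2(m-4)$) check out. Nothing further is needed.
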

\section{Behavior of solutions of the ODE}
Due to Proposition \ref{proposition:5:1}, our problem is reduced to 
analyse (\ref{eq:5:6}). To do it, we need 
the following two lemmas. 
\begin{lemma}[Comparison theorem {\cite[Theorem III.4.1]{Hartman}}]
  \label{lemma:6:1}
  Assume that a real valued function 
  $F$ on $\R$ satisfies the Lipshitz condition,  
  i.e., there exists a positive number $L>0$ such that 
  \begin{math}
    \vert F(p)-F(q)\vert\leq L\vert p-q\vert,
  \end{math}
  we have
  \begin{enumerate}
  \item 
    \label{item:lemma:6:1:1}
    Two real valued functions $u$ and $v$ defined on the interval 
    $[0,\epsilon)$ for some 
    positive number $\epsilon>0$ satisfy that 
    \begin{displaymath}
      \left\{
        \begin{aligned}
          u'(t)&\geq F(u(t)),\\
          v'(t)&=F(v(t)),\\
          u(0)&=v(0),
        \end{aligned} 
      \right.
    \end{displaymath}
    then it holds that 
    \begin{math}
      u(t)\geq v(t)
    \end{math}
    for any $t > 0$.
  \item 
    \label{item:lemma:6:1:2}
    Conversely, if $u$ and $v$ satisfy that 
    \begin{displaymath}
      \left\{
        \begin{aligned}
          u'(t)&\leq F(u(t)),\\
          v'(t)&=F(v(t)),\\
          u(0)&=v(0),
        \end{aligned} 
      \right.
    \end{displaymath}
    then it holds that 
    \begin{math}
      u(t)\leq v(t)
    \end{math}
    for any $t > 0$.
  \end{enumerate}
\end{lemma}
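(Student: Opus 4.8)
The plan is to reduce both parts to a single differential-inequality estimate on the difference $w \defeq u-v$, combining the Lipschitz bound on $F$ with the integrating factor $e^{-Lt}$ (a Gr\"onwall-type argument).

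First I would treat part (\ref{item:lemma:6:1:1}). On $[0,\epsilon)$ set $w \defeq u-v$; then $w(0)=0$ and, since $u'\geq F(u)$ and $v'=F(v)$ hold identically, $w' = u'-v' \geq F(u)-F(v) \geq -L\vert u-v\vert = -L\vert w\vert$. Suppose, for contradiction, that $w(t_0)<0$ for some $t_0\in(0,\epsilon)$. By continuity of $w$ and $w(0)=0$, the number $t_1 \defeq \sup\{t\in[0,t_0] : w(t)\geq 0\}$ satisfies $0\leq t_1<t_0$, $w(t_1)=0$, and $w(t)<0$ for all $t\in(t_1,t_0]$. On this last interval $\vert w\vert=-w$, so the estimate above becomes $w'\geq Lw$, i.e. $(e^{-Lt}w)' = e^{-Lt}(w'-Lw)\geq 0$. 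Hence $e^{-Lt}w$ is non-decreasing on $[t_1,t_0]$, so $e^{-Lt_0}w(t_0)\geq e^{-Lt_1}w(t_1)=0$, forcing $w(t_0)\geq 0$, a contradiction. Therefore $w\geq 0$, i.e. $u(t)\geq v(t)$ on $[0,\epsilon)$.

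For part (\ref{item:lemma:6:1:2}) I would not repeat the computation but reduce it to part (\ref{item:lemma:6:1:1}): put $\widehat{u}\defeq -u$, $\widehat{v}\defeq -v$, and $\widehat{F}(p)\defeq -F(-p)$. Then $\widehat{F}$ satisfies the same Lipschitz condition with constant $L$, and $\widehat{u}' = -u'\geq -F(u) = \widehat{F}(\widehat{u})$, $\widehat{v}' = -v' = \widehat{F}(\widehat{v})$, $\widehat{u}(0)=\widehat{v}(0)$. Part (\ref{item:lemma:6:1:1}) then gives $\widehat{u}\geq\widehat{v}$, that is $u\leq v$.

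The only delicate points are bookkeeping ones: one must choose $t_1$ as the \emph{last} zero of $w$ before $t_0$ so that $w$ has a definite sign on $(t_1,t_0]$ — otherwise the absolute value cannot be removed from the Lipschitz estimate — and one needs $u$ and $v$ to be $C^1$ (or at least absolutely continuous) so that $e^{-Lt}w$ is a legitimate antiderivative of $e^{-Lt}(w'-Lw)$ and the monotonicity conclusion follows; in our application $u$ and $v$ are smooth solutions of an ODE, so this is automatic. I expect the sign analysis near $t_1$ to be the main — though still routine — obstacle; the remainder is the standard integrating-factor computation.
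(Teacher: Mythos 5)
Your proof is correct. Note, however, that the paper itself offers no proof of this lemma: it is quoted as a known result, with the reference \cite[Theorem III.4.1]{Hartman} standing in for the argument. So there is nothing in the paper to compare against line by line; what you have supplied is a self-contained verification via the standard Gr\"onwall/integrating-factor device. Your handling of the one genuinely delicate point --- choosing $t_1$ as the \emph{last} zero of $w=u-v$ before the hypothetical point where $w<0$, so that $\vert w\vert=-w$ has a definite sign and the Lipschitz bound $w'\geq -L\vert w\vert$ becomes the one-sided linear inequality $w'\geq Lw$ --- is exactly right, and the reduction of part (2) to part (1) by the reflection $\widehat{u}=-u$, $\widehat{v}=-v$, $\widehat{F}(p)=-F(-p)$ is clean and avoids duplicating the computation. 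One could remark that Hartman's theorem is stated in greater generality (for one-sided derivates and maximal solutions, without a Lipschitz hypothesis on $F$), whereas your argument leans on the Lipschitz constant $L$; but since the lemma as stated here assumes $F$ globally Lipschitz, your more elementary route fully suffices for the use made of it in Sections 6 and 7.
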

Next, we have to prepare Jacobi's $\sn$-function. 
\begin{proposition}
  \label{proposition:6:2}
  \mbox{}\par
  \begin{enumerate}
  \item 
    \label{item:proposition:6:2:1}
    The solution of the initial value problem of the ordinary differential equation 
    \begin{equation}
      \label{eq:6:8}
      (y')^2=1-y^4,  y(0)=0,  y'(0)>0
    \end{equation}
    is given by the Jacobi's elliptic function 
    $y(t)=\sn(\i ,t)$.
  \item 
    \label{item:proposition:6:2:2}
    The function 
    $y(t)=\sn(\i,t)$ is real valued in $t\in \R$, 
    and pure imaginary valued in $t\in \i\R$.
  \item 
    \label{item:proposition:6:2:3}
    The function $y(t)$ is a double periodic function in the whole complex plane $\C$ with the two periods $4K$ and $2\i K'$,  
    where $K>0$ and $K'>0$ are given by 
    \begin{align}
      K &\defeq K(k)=\int^{\pi/2}_0\frac{dx}{\sqrt{1-k^2 \sin^2x}},
      \label{eq:6:9}\\
      K'&\defeq K(k'),\quad 
      k'\defeq \sqrt{1-k^2},
      \label{eq:6:10}
    \end{align}
    and it has the only one zeros at $2n K+2m\i K'$, 
    and has the only poles at $2n K+(2m+1)\i K'$, where $m$ and $n$ run over the set of all integers.
  \item 
    \label{item:proposition:6:2:4}
    In particular, 
    it has no pole in the real axis, and has no zero on the imaginary axis except $0$. 
    Furthermore, it has poles on the two lines through the origin with angles 
    $\pm\pi/4$ in the complex plane $\C$.
  \end{enumerate}
\end{proposition}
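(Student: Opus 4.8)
The plan is to handle the four assertions in order of increasing difficulty, the crux being the classical description of the Jacobi function $\sn(\i,\cdot)$ (modulus $k=\i$) in part (3). Note first that with $k=\i$ the classical identity $\bigl(\tfrac{d}{dt}\sn(\i,t)\bigr)^2=(1-\sn(\i,t)^2)(1-k^2\sn(\i,t)^2)$ becomes $1-\sn(\i,t)^4$, with $\sn(\i,0)=0$ and $\left.\tfrac{d}{dt}\sn(\i,t)\right|_{t=0}=\cn(\i,0)\,\dn(\i,0)=1$; so (1) reduces to a uniqueness statement. For (1) I would remark that any solution $y$ of \eqref{eq:6:8} has $y'(0)=1$ (from $(y'(0))^2=1-y(0)^4=1$ and $y'(0)>0$), then differentiate $(y')^2=1-y^4$ to get $y''=-2y^3$ wherever $y'\neq0$, hence on a dense set, hence everywhere ($y\equiv0$ being excluded by $y'(0)=1$); since $w\mapsto-2w^3$ is locally Lipschitz, the problem $y''=-2y^3$, $y(0)=0$, $y'(0)=1$ has a unique solution, which $\sn(\i,\cdot)$ also solves.

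For (2) on the real axis the point is that $(y')^2=1-y^4\ge0$ forces $|y|\le1$ on every real interval of existence, so the real solution of $y''=-2y^3$ with the above data is global on $\R$ and, as a real-analytic equation with real initial data, real-valued there; hence $\sn(\i,t)\in\R$ for $t\in\R$. For the imaginary axis I would take for granted the classical fact that $\sn(\i,\cdot)$ extends meromorphically to $\C$ and that the identity $(y')^2=1-y^4$ persists by analytic continuation. Then I set $w(s)\defeq-\i\,y(\i s)$; this is meromorphic with $w(0)=0$, $w'(s)=y'(\i s)$ (so $w'(0)=y'(0)=1$), and $(w'(s))^2=(y'(\i s))^2=1-y(\i s)^4=1-w(s)^4$. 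By the uniqueness in (1), $w=\sn(\i,\cdot)$, which is real on $\R$ by the previous step; therefore $y(\i s)=\i\,w(s)$ is purely imaginary for $s\in\R$, i.e. $\sn(\i,t)$ is purely imaginary on $\i\R$.

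For (3) I would reduce to a \emph{real} modulus via the imaginary-modulus transformation. Matching ODEs exactly as in (1) gives $\sn(\i,u)=\tfrac1{\sqrt2}\,\dfrac{\sn(\sqrt2\,u,\,1/\sqrt2)}{\dn(\sqrt2\,u,\,1/\sqrt2)}$, where $1/\sqrt2$ is the self-dual modulus (so that the complementary modulus is again $1/\sqrt2$). The period lattice and the locations of the simple zeros and simple poles of $\sn(\cdot,1/\sqrt2)$ and $\dn(\cdot,1/\sqrt2)$ are classical; transporting them back through $v=\sqrt2\,u$ and using the evaluation $K(\i)=\int_0^{\pi/2}(1+\sin^2x)^{-1/2}\,dx=\tfrac1{\sqrt2}K(1/\sqrt2)$ (finite and positive), together with the reciprocal-modulus relation that identifies $K'=K(\sqrt{1-k^2})=K(\sqrt2)$ with $(1-\i)K(\i)$, I would recover exactly the asserted double periodicity with periods $4K$, $2\i K'$ and the stated locations $2nK+2m\i K'$ of the zeros and $2nK+(2m+1)\i K'$ of the poles. (Alternatively one can quote the general theory of Jacobi functions with complex modulus directly.) Part (4) is then read off: $|\sn(\i,t)|\le1$ on $\R$ rules out real poles; the zero/pole lattices of (3) intersect the coordinate axes and the lines $\arg t=\pm\pi/4$ as described, the key point being that the second quasi-period $2\i K'=2\i(1-\i)K(\i)=2(1+\i)K(\i)$ lies on the diagonal, which is what places the poles nearest the origin on the lines $\arg t=\pm\pi/4$.

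The step I expect to be the main obstacle is (3): the textbook formulas for the period lattice, zeros and poles of $\sn$ are stated for a real modulus in $(0,1)$, so the delicate work is to justify them for $k=\i$ — either by pushing the imaginary-modulus transformation through while tracking carefully which complete elliptic integral plays the role of $K'$ (and that it is a genuinely complex number, so that $2\i K'$ is \emph{not} on the imaginary axis), or by using an analytic-continuation version of the theory. By contrast, steps (1) and (2) are routine ODE uniqueness arguments, and (4) is bookkeeping once (2) and (3) are in place.
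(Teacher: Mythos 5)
Your proposal is correct, but it reaches the key facts by a genuinely different route from the paper. For (\ref{item:proposition:6:2:1}) and (\ref{item:proposition:6:2:2}) the paper works directly with the amplitude function: it differentiates the elliptic integral to obtain $\frac{d}{dt}\sn(k,t)=\cn(k,t)\dn(k,t)$ and specializes $k=\i$, and for the imaginary axis it simply quotes the Jacobi imaginary transformation $\sn(k,\i x)=\i\,\sn(k',x)/\cn(k',x)$; you instead recast everything as ODE uniqueness for $y''=-2y^3$, which is equivalent but more self-contained (the paper does not address uniqueness at all). The real divergence is in (\ref{item:proposition:6:2:3})--(\ref{item:proposition:6:2:4}). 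The paper argues via the nome: it expands $q^{1/4}$ in powers of $k$, observes that $q$ is a negative real number when $k=\i$, concludes ``$K'/K=1$,'' and then quotes the pole lattice $2nK+(2m+1)\i K'$. Your reduction to the self-dual real modulus $1/\sqrt2$, with the explicit evaluation $K'=K(\sqrt2)=(1-\i)K(\i)$, is more transparent at exactly the delicate point: it makes visible that $K'$ is genuinely complex, so that $2\i K'=2(1+\i)K$ lies on the diagonal and the poles $\bigl((2n+1)+(2m+1)\i\bigr)K(\i)$ really do sit on the lines $\arg t=\pm\pi/4$. The paper's shortcut ``$q$ negative real, hence $K'/K=1$'' only captures the real part of $K'/K=1-\i$ and, read literally together with the statement's ``$K'>0$,'' would place the poles off the diagonals; your bookkeeping repairs this. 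The one input you take on faith --- the zero/pole lattice of $\sn$ and $\dn$ for a real modulus in $(0,1)$ --- is exactly what the paper also invokes as ``well known,'' so the two arguments ultimately rest on the same classical facts.
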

\begin{proof}
  For (\ref{item:proposition:6:2:1})
  we have to see the function $y(t)=\sn(\i,t)$ solves (\ref{eq:6:8}).   
  Let us recall (cf. 281 Elliptic Functions, \cite[pp. 873--876]{M})
  the elliptic integral of the first kind 
  $u(k,\varphi)$ with modulus $k$ given by 
  \begin{equation}
    \label{eq:6:11}
    u(k,\varphi)=\int^{\varphi}_0 
    \frac{d\psi}{\sqrt{1-k^2 \sin^2\psi}}, 
  \end{equation}
  and its inverse function is the amplitude function 
  $\varphi=\am(k,u)$. By differentiating (\ref{eq:6:11}), 
  we have 
  \begin{equation}
    \label{eq:6:12}
    \frac{d}{d\varphi}u(k,\varphi)
    =\frac{1}{\sqrt{1-k^2 \sin^2\varphi}}. 
  \end{equation}
  Then, we have 
  \begin{equation}
    \label{eq:6:13}
    \sn(k,t)=\sin(\am(k,t)), 
  \end{equation}
  which implies immediately that  
  \begin{equation}
    \label{eq:6:14}
    \begin{aligned}
      \frac{d}{dt}\sn(k,t)
      &=
      \left(
        \frac{d}{dt}\am(k,t)
      \right)
      \cos (\am(k,t))
      \\
      &=
      \left(
        \frac{d}{dt}\am(k,t)
      \right)
      \sqrt{1-\sin^2(\am(k,t))}
      \\
      &=
      \sqrt{1-k^2\sin^2(\am(k,t))}
      \sqrt{1-\sn^2(k,t)}
      \\
      &=
      \sqrt{(1-k^2\sn^2(k,t))(1-\sn^2(k,t))}.
    \end{aligned}
  \end{equation}
  Here, we put $k=\i = \sqrt{-1}$ in (\ref{eq:6:14}), we have 
  \begin{equation}
    \label{eq:6:15}
    \frac{d}{dt}\sn(\i,t) = \sqrt{1-\sn^4(\i,t)},
  \end{equation}
  that is, the function 
  $y(t)=\sn(\i,t)$ is a solution of 
  the differential equation of (\ref{eq:6:8}).
  Since
  $\sn(\i,0)=\sin(\am(\i,0))$ and $\am(\i,0)=0$, 
  we have $\sn(\i,0)=0$. 
  \par
  To get $y'(0)>0$, we only notice that, if we denote as the usual manner 
  \begin{displaymath}
    \begin{aligned}
      \cn(k,t)&\defeq \cos (\am(k,t))), \\
      \dn(k,t)&\defeq \sqrt{1-k^2 \sn^2(k,t)},
    \end{aligned}
  \end{displaymath}
  it holds that 
  \begin{align}
    &\frac{d}{dt}\sn(k,t)=\cn(k,t) \dn(k,t),
    \label{eq:6:16}
    \\
    \label{eq:6:17}
    &\begin{aligned}
      \left.
        \frac{d}{dt}
      \right|_{t=0}
      \sn(k,t)
      &=\cn(k,0) \dn(k,0)
      \\
      &=\cos(\am(k,0))
      \sqrt{1-k^2\sn^2(k,0)}=1,
    \end{aligned}
  \end{align}
  that is, $y'(0)>0$. We have (\ref{item:proposition:6:2:1}).
  \par
  For (\ref{item:proposition:6:2:2}), 
  $\am(k,t)$ is real valued if $t\in \R$ by definition of $\am(k,t)$, 
  and then $\sn(k,t)$ and $\cn(k,t)$ are also real valued if $t\in \R$. 
  On the other hand, since
  \begin{displaymath}
    \sn(k,\i x)=
    \i \frac{\sn(k',x)}{\cn(k',x)}
    \quad (k'=\sqrt{1-k^2}),
  \end{displaymath}
  the function 
  $\sn(k,t)$ is pure imaginary valued if $t\in \i \R$. 
  \par
  For (\ref{item:proposition:6:2:3}) and (\ref{item:proposition:6:2:4}), 
  write 
  $K'=- \tau K$ with $\tau\in \C$. 
  Then 
  $q \defeq e^{\i \pi \tau} =e^{-\i \pi (K'/K)}$ 
  can be written by using some series of real numbers, 
  $\{a_{\ell}\}_{\ell=0}^{\infty}$, 
  as 
  \begin{displaymath}
    q^{1/4}
    =
    \left(
      \frac{k}{4}
    \right)^{1/2}
    \left(
      \sum_{\ell=0}^{\infty}a_{\ell} k^{2 \ell}
    \right). 
  \end{displaymath}
  Thus, $q^{1/4}\in \i^{1/2} \R$ when $k=\i$, 
  which implies that $q$ is a negative real number.  
  Thus, it holds that $K'/K=1$. 
  It is known that all the poles of
  $\sn(k,x)$ are $2n K+\i (2m+1)K'$, 
  and by $K=K'$, 
  $\sn(k,x)$ has poles 
  on the lines through the origin 
  with angles $\pm \pi/4$. The other properties are well known. 
\end{proof}
By Proposition \ref{proposition:6:2}, we have 
\begin{proposition}
  \label{proposition:6:3}
  For every positive integers 
  $A$ and $C$, and a real number $a$, 
  all the solutions of both the ordinary differential equations 
  \begin{align}
    v'(t)&=\sqrt{A v(t)^4+C},\quad v(0)=a,
    \label{eq:6:18}
    \\
    v'(t)&=\sqrt{A v(t)^4-C},\quad v(0)=a, 
    \quad(\text{with } A a^4>C), 
    \label{eq:6:19}
  \end{align}
  are explosive within finite time. 
  That is, there exist positive real numbers $T_0>0$ and $T_1>0$ 
  depending on $A$, $C$ and $a$ such that 
  the existence intervals of solutions of (\ref{eq:6:18}) or (\ref{eq:6:19}) are $(-T_0,T_1)$. 
\end{proposition}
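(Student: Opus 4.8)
The plan is to reduce each of the two ODEs to the Jacobi $\sn$-situation of Proposition~\ref{proposition:6:2} by an explicit scaling of the independent and dependent variables, and then to exploit the fact — already extracted in Proposition~\ref{proposition:6:2}(\ref{item:proposition:6:2:4}) — that $\sn(\i,\cdot)$ has poles on the lines through the origin with angles $\pm\pi/4$. Concretely, for (\ref{eq:6:18}) I would set $v(t)=\lambda\, y(\mu t + c)$ for constants $\lambda,\mu>0$ and a shift $c$; substituting into $v'=\sqrt{Av^4+C}$ turns the equation into $\lambda\mu\, y' = \sqrt{A\lambda^4 y^4 + C}$, and choosing $\lambda^4 = \mu^2\lambda^2 / A$ (i.e. $\lambda^2 = \mu^2/A$) and matching the constant term to normalize the right-hand side to $\sqrt{1 - y^4}$ after a sign/phase adjustment. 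Because $A,C>0$, the natural normalization produces $(y')^2 = y^4 + (\text{const})$; to land exactly on $(y')^2 = 1 - y^4$ one passes to the companion elliptic function, or equivalently one observes that $w(t)\defeq 1/v(t)$ (or a Möbius image thereof) satisfies $(w')^2 = A + Cw^4$ up to constants, so that after a further rescaling $w$ is a rescaled $\sn(\i,\cdot)$. Either way, $v$ is, up to an affine change of variables, a quotient/reciprocal of $\sn(\i,\cdot)$ and $\cn(\i,\cdot)$, hence a genuine elliptic function on $\C$.

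The key point is then that such an elliptic function has poles: by Proposition~\ref{proposition:6:2}(\ref{item:proposition:6:2:3})--(\ref{item:proposition:6:2:4}), the zeros and poles of $\sn(\i,\cdot)$ form a lattice, and in particular $\sn(\i,\cdot)$ has poles \emph{on the real axis after the affine reparametrization} — this is exactly why the coincidence $K=K'$ (equivalently the angle $\pm\pi/4$) was established there: the reciprocal $1/\sn(\i,\cdot)$ vanishes where $\sn(\i,\cdot)$ blows up and blows up where $\sn(\i,\cdot)$ vanishes, and the latter occurs at real points of the lattice. So the function representing $v$ has a genuine pole at some real value $t = T_1 > 0$ (and, tracing the lattice in the other direction, at some $t = -T_0 < 0$); as one approaches these from inside, $v(t)\to\infty$, i.e. the solution explodes in finite time, and the maximal existence interval is exactly $(-T_0, T_1)$. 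The initial condition $v(0) = a$ just fixes which shift $c$ to use; since $a$ is real (and, for (\ref{eq:6:19}), satisfies $Aa^4 > C$ so that the square root is real and positive at $t=0$), the relevant solution branch is the real-analytic one, and the distances $T_0, T_1$ to the nearest lattice poles depend only on $A$, $C$, $a$ as claimed.

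For (\ref{eq:6:19}), $v' = \sqrt{Av^4 - C}$ with $Aa^4 > C$, the same substitution $v = \lambda y(\mu t + c)$ yields $(y')^2 = y^4 - (\text{const})$ after rescaling, which is again an elliptic right-hand side; one checks that the assumption $Aa^4>C$ guarantees $v$ stays in the region where the square root is real (i.e. away from the zeros of $Av^4 - C$) precisely on the interval between two consecutive real poles of the corresponding elliptic function, so the solution is increasing, real, and explodes at both finite endpoints. Here $y$ is (an affine image of) $\sn(\i,\cdot)$ itself rather than its reciprocal, but the mechanism is identical: finite-time blow-up is forced by the lattice of poles.

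The main obstacle I anticipate is bookkeeping the algebra of the reduction — pinning down the precise constants $\lambda,\mu$, the correct Möbius/reciprocal substitution that sends $y^4 \pm \text{const}$ to $1 - y^4$, and verifying that the relevant pole of the resulting elliptic function genuinely lies on the \emph{real} axis for the branch selected by $v(0)=a$ (this is where Proposition~\ref{proposition:6:2}(\ref{item:proposition:6:2:4}), and in particular the identity $K = K'$, does the real work). Once the correct change of variables is written down, the explosion and the identification of the existence interval $(-T_0,T_1)$ with the gap between consecutive real poles are immediate, and the continuous dependence of $T_0,T_1$ on $A,C,a$ follows from the explicit formulas for $\lambda,\mu,c$ together with the lattice description of the poles of $\sn(\i,\cdot)$.
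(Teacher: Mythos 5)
Your proposal follows essentially the same route as the paper's proof: the paper likewise reduces (\ref{eq:6:18}) and (\ref{eq:6:19}) to the $\sn(\i,\cdot)$ equation and invokes Proposition~\ref{proposition:6:2}~(\ref{item:proposition:6:2:4}) to locate real poles, via the explicit substitutions $w(t)=-\i^{3/2}\sn(\i,\i^{1/2}t)$ (so that $(w')^2=1+w^4$) and $z(t)=\i\,\sn(\i,\i t)$ (so that $(z')^2=z^4-1$), after which every solution of (\ref{eq:6:18}) or (\ref{eq:6:19}) is $k\,w(\ell t+t_0)$ or $k\,z(\ell t+t_0)$ with real $k,\ell>0$ and $t_0$. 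The only caveat is that your ``equivalently, take $w=1/v$'' shortcut does not by itself flip the sign of the quartic term (it gives $(w')^2=A+Cw^4$, the same sign pattern), so the sign adjustment must indeed be carried out by the complex rotation of argument and value as above --- precisely the bookkeeping you flagged as the remaining obstacle.
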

\begin{proof}
  Let $y(t) \defeq \sn(\i,t)$, 
  and 
  $w(t)\defeq - \i^{3/2}y(\i^{1/2} t)$.  
  Then, we have 
  \begin{math}
  w'
  =
  -\i^{3/2+1/2} y'
  =
  y'(\i^{1/2}t)
  \end{math}
  and also 
  \begin{displaymath}
    w'(t)^2
    =
    y'(\i^{1/2}t)^2
    =
    1-y(\i^{1/2} t)^4
    =
    1+w(t)^4
  \end{displaymath}
  since 
  \begin{math}
    w(t)^4=(-\i^{3/2})^4y(\i^{1/2}t)^4=\i^6y(\i^{1/2}t)^4=-y(\i^{1/2}t)^4.
  \end{math}
  Thus, 
  \begin{displaymath}
    w(t) 
    \defeq 
    -\i^{3/2} \sn(\i,\i^{1/2} t)
  \end{displaymath}
  is a solution of 
  \begin{displaymath}
    (w')^2=1+w^4.
  \end{displaymath}
  \par
  By the same way, 
  if we put 
  \begin{math}
    z(t) \defeq \i y(\i t), 
  \end{math}
  then 
  \begin{displaymath}
    (z')^2=\i^2 (y')^2=-(1-y^4)=y^4-1=z^4-1. 
  \end{displaymath}
  Thus, 
  \begin{displaymath}
    z(t) \defeq \i \sn(\i,\i t)
  \end{displaymath}
  is a solution of 
  \begin{displaymath}
    (z')^2=z^4-1.
  \end{displaymath}
  \par
  Therefore, any solution of (\ref{eq:6:18}) or (\ref{eq:6:19}) can be obtained by 
  $v(t) := k w(\ell t + t_0)$ or
  $v(t) := k z(\ell t + t_0)$ 
  for some constants $k$, $\ell > 0$ and some $t_0 \in \R$, where
  \begin{displaymath}
    w(t)
    =
    -\i^{3/2}
    \sn(\i,\i^{1/2} t), 
    \quad 
    z(t)=\i \sn(\i,\i t). 
  \end{displaymath}
  By Proposition \ref{proposition:6:2} (\ref{item:proposition:6:2:4}), 
  both the obtained solutions have poles, 
  so that solutions of (\ref{eq:6:18}) and (\ref{eq:6:19})
  are explosive at finite time. 
\end{proof}
\begin{remark}
  \label{remark:6:4}
  Every solution of 
  \begin{displaymath}
    (v')^2=1-v^4,\quad \vert v(0)\vert^4<1 
  \end{displaymath}
  exists on the whole line $t\in \R$. 
  This fact follows from the fact that the poles of $\sn(k,t)$ do not exist 
  on the whole real line $\R$ 
  (cf. Proposition \ref{proposition:6:2}).  
\end{remark}
The following lemma plays essential roles in the existence and non-existence result of global solutions of (\ref{eq:0}), 
and shows behavior of the energy of solution of (\ref{eq:7:1}).
\begin{lemma}
  \label{lemma:7:2}
  Let $u$ be a solution of ODE
  \begin{displaymath}
    u''(t) = A u(t) u'(t) + B (u(t))^3, 
  \end{displaymath}
  and we define $e$ and $g_k$ as
  \begin{displaymath}
    \begin{aligned}
      e(u(t)) &= \frac{1}{2}(u'(t))^2 - \frac{B}{4}(u(t))^4, 
      \\
      g_k(u(t)) &= u'(t) + k (u(t))^2, 
    \end{aligned}
  \end{displaymath}
  Moreover we assume $k$ is a real solution of 
  \begin{equation}
    \label{eq:7:2:0}
    2k^2 + k A - B = 0, 
  \end{equation}
  then, we have
  \begin{align}
    \label{eq:7:2:1}
    \frac{d}{dt} e(u(t)) 
    &=Au(t)(u'(t))^2, 
    \\
    \label{eq:7:2:2}
    \frac{d}{dt} g_k(u(t)) 
    &=
    (A + 2k)u(t) g_k(u(t)).
  \end{align}
  Moreover we may write
  \begin{equation}
    \label{eq:7:2:3}
    g_k(u(t)) 
    =
    g_k(u(0)) 
    \exp
    \left( (A + 2k)
      \int_0^t u(s)\,ds
    \right).
  \end{equation}
\end{lemma}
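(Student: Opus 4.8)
The plan is to verify \eqref{eq:7:2:1}, \eqref{eq:7:2:2}, and \eqref{eq:7:2:3} by direct differentiation, using the ODE $u'' = Auu' + Bu^3$ to eliminate $u''$ whenever it appears. For \eqref{eq:7:2:1}, I would compute
\begin{displaymath}
  \frac{d}{dt} e(u(t)) = u'u'' - B u^3 u' = u'(Auu' + Bu^3) - Bu^3 u' = Au(u')^2,
\end{displaymath}
which is immediate once $u''$ is substituted; the $Bu^3 u'$ terms cancel. This step is routine and presents no obstacle.

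For \eqref{eq:7:2:2}, I would differentiate $g_k(u) = u' + ku^2$ to get $\frac{d}{dt} g_k(u) = u'' + 2kuu'$, then substitute the ODE:
\begin{displaymath}
  \frac{d}{dt} g_k(u) = Auu' + Bu^3 + 2kuu' = (A+2k)uu' + Bu^3.
\end{displaymath}
The goal is to rewrite the right-hand side as $(A+2k)u\,g_k(u) = (A+2k)u u' + (A+2k)ku^2 \cdot u = (A+2k)uu' + (A+2k)ku^3$. So the identity \eqref{eq:7:2:2} holds precisely when $Bu^3 = (A+2k)ku^3$, i.e. when $B = (A+2k)k = Ak + 2k^2$, which is exactly the hypothesis \eqref{eq:7:2:0} that $2k^2 + kA - B = 0$. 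This is the one place where the assumption on $k$ is used, and it is the conceptual content of the lemma, though it is still only a one-line algebraic check.

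For \eqref{eq:7:2:3}, I would regard \eqref{eq:7:2:2} as a linear first-order ODE for the scalar function $t \mapsto g_k(u(t))$, namely $\frac{d}{dt} g_k(u(t)) = c(t)\, g_k(u(t))$ with coefficient $c(t) = (A+2k)u(t)$. Integrating the logarithmic derivative (or invoking uniqueness for linear ODEs) gives
\begin{displaymath}
  g_k(u(t)) = g_k(u(0)) \exp\left( \int_0^t c(s)\,ds \right) = g_k(u(0)) \exp\left( (A+2k)\int_0^t u(s)\,ds \right),
\end{displaymath}
which is \eqref{eq:7:2:3}. I do not anticipate any real obstacle here: the entire lemma is a sequence of short computations, and the only subtlety is recognizing that the algebraic constraint \eqref{eq:7:2:0} is exactly what makes the $Bu^3$ term absorb into the product form in \eqref{eq:7:2:2}. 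If I wanted to be careful about the case $g_k(u(0)) = 0$, I would note that then $g_k(u(t)) \equiv 0$ by uniqueness, so \eqref{eq:7:2:3} still holds trivially.
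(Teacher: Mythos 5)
Your proposal is correct and follows essentially the same route as the paper: differentiate $e$ and $g_k$, substitute the ODE for $u''$, and observe that the constraint $2k^2+Ak-B=0$ is exactly what lets $Bu^3$ be absorbed into $(A+2k)u\,g_k(u)$. The paper's proof is the same computation (it leaves the integration step for \eqref{eq:7:2:3} implicit, which you spell out correctly).
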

\begin{proof}
  Differentiating $e(u(t))$ by $t$, 
  we have
  \begin{displaymath}
    \frac{d}{dt}e(u(t)) = u' u''- B u^3 u'= A u(u')^2,
  \end{displaymath}
  so we have (\ref{eq:7:2:1}), immediately.
  Differentiating $g_k(u(t))$ by $t$, 
  we have
  \begin{displaymath}
    \frac{d}{dt}g_k(u(t)) 
    = 
    u''+ 2k u u'
    = 
    u((A + 2k) u' + Bu^2).
  \end{displaymath}
  If $k$ satisfies $2k^2 + A k - B = 0$, 
  we have (\ref{eq:7:2:2}).
\end{proof}
\section{Non-existence and existence of global solutions of the ODE}
\subsection{Main result}
In this section, we will show 
\begin{theorem}
  \label{theorem:7:1}
  Let $m\geq 3$. 
  Then, we have 
  \begin{enumerate}
  \item 
    \label{item:theorem:7:1:1}
    In the case $m\geq 5$, 
    there exists no $C^{\infty}$ global solution $u$ of (\ref{eq:5:6}) on the whole real line $\R$.
  \item 
    \label{item:theorem:7:1:2}
    In the case of $m=4$, 
    every solution $u$ of (\ref{eq:5:6}) is of the form 
    $u(t)=-b \tanh(b t + c)$ 
    for constants $b$ and $c$.
  \item 
    \label{item:theorem:7:1:3}
    In the case of $m=3$, every solution $u$ (\ref{eq:5:6}) 
    with $u(0)=0$ and $u'(0) \not= 0$ is a global bounded solution on the whole line $t\in\R$. 
  \end{enumerate}
\end{theorem}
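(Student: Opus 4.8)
The plan is to treat the three cases separately, in each case reducing the third-order equation \eqref{eq:0} to the second-order equation \eqref{eq:5:6} for $u=f'/f$ via Proposition \ref{proposition:5:1}, and then to analyze \eqref{eq:5:6} by the conserved/monotone quantities furnished by Lemma \ref{lemma:7:2} together with the comparison theorem, Lemma \ref{lemma:6:1}. Writing \eqref{eq:5:6} in the form $u''=Au u'+Bu^3$ with $A=-\frac{m-8}{m-2}$ and $B=\frac{2(m-4)}{(m-2)^2}$, the quadratic \eqref{eq:7:2:0} reads $2k^2+Ak-B=0$; a direct computation shows its roots are $k=\frac{2}{m-2}$ and $k=-\frac{m-4}{m-2}$ (their sum is $-A/2$ and product $-B/2$, which checks). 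So for each $m$ we have an explicit factor $g_k(u)=u'+ku^2$ that evolves by the first-order linear ODE \eqref{eq:7:2:2}, and hence by \eqref{eq:7:2:3} never vanishes unless it vanishes identically.

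\medskip

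For case \eqref{item:theorem:7:1:2}, $m=4$, we have $B=0$, so \eqref{eq:5:6} is $u''=-2uu'=-(u^2)'$, which integrates once to $u'=-u^2+b^2$ for a constant (writing the integration constant as $b^2$; if it is negative or zero the analysis is similar but yields no positive $f$ or a degenerate one, and in any case the Riccati equation $u'=b^2-u^2$ is solved by $u=-b\tanh(bt+c)$, $u=b\coth(bt+c)$, or $u=\pm b$ or $u=-1/(t-c)$). The global $C^\infty$ solutions on all of $\R$ are exactly those with no blow-up, i.e. $u(t)=-b\tanh(bt+c)$ (including $b=0$, $u\equiv 0$, and $u\equiv\pm b$ as the equilibria, all of which are degenerate cases of $\tanh$ after allowing complex shifts — more carefully, one states the bounded global solutions are precisely $-b\tanh(bt+c)$ and the constants $\pm b$). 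This is essentially elementary ODE phase-line analysis for the Riccati equation.

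\medskip

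For case \eqref{item:theorem:7:1:1}, $m\ge 5$, here $A<0$ iff $m<8$, $A=0$ iff $m=8$, $A>0$ iff $m>8$, and $B>0$ always (for $m\ge 5$). The idea is to use the energy $e(u)=\frac12(u')^2-\frac{B}{4}u^4$, which by \eqref{eq:7:2:1} satisfies $\frac{d}{dt}e=Au(u')^2$. When $m=8$, $A=0$, so $e$ is constant: $(u')^2=\frac{B}{2}u^4+2e(u(0))$, and since $B>0$ this is exactly an equation of the type \eqref{eq:6:18} (if $e(u(0))\ge 0$) or \eqref{eq:6:19} (if $e(u(0))<0$), so by Proposition \ref{proposition:6:3} every solution blows up in finite time — with the one caveat $u\equiv 0$, which forces $f$ constant and hence $\varphi$ harmonic, excluded. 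For $m\ne 8$ one does not have exact conservation, but one can still run a comparison argument: assume for contradiction a global solution $u$ on $\R$. Using the factored quantity $g_k(u)=u'+ku^2$ with, say, $k=\frac{2}{m-2}>0$, equation \eqref{eq:7:2:3} shows $g_k(u(t))=g_k(u(0))\exp\big((A+2k)\int_0^t u\big)$ keeps a constant sign; combining the two factorizations $u'=-ku^2+g_k(u)$ for the two roots $k_1,k_2$ one gets $(k_2-k_1)g_{k_1}(u)=(u')$-free expression $\cdots$, i.e. one can solve for $u'$ and $u^2$ in terms of the two exponentials, obtaining a Bernoulli-type first-order ODE for $u$ whose solutions are forced to blow up (or be identically zero) — this is where the energy inequality of Lemma \ref{lemma:7:2} and the comparison Lemma \ref{lemma:6:1} against the solvable models of Proposition \ref{proposition:6:3} do the work: one shows $(u')^2$ dominates $c\,u^4-C$ along the trajectory, hence $u$ grows at least as fast as a solution of \eqref{eq:6:18}/\eqref{eq:6:19}, which explodes. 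This last step — getting the right differential \emph{inequality} to which Lemma \ref{lemma:6:1} applies, uniformly for all $m\ge 5$, $m\ne 8$ — is the main obstacle, and is presumably the content of Lemma \ref{lemma:7:2}'s role advertised in the introduction.

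\medskip

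For case \eqref{item:theorem:7:1:3}, $m=3$: here $A=-\frac{3-8}{1}=5$ and $B=\frac{2(-1)}{1}=-2$, so \eqref{eq:5:6} is $u''=5uu'-2u^3$, i.e. $u''=5uu'-2u^3$ with $B<0$. The energy $e(u)=\frac12(u')^2-\frac{B}{4}u^4=\frac12(u')^2+\frac12 u^4$ is positive definite and satisfies $\frac{d}{dt}e=5u(u')^2$, which has no definite sign, so direct conservation is unavailable; instead use $g_k$. The roots of $2k^2+5k+2=0$ are $k=-\frac12$ and $k=-2$. By \eqref{eq:7:2:3}, $g_{-1/2}(u)=u'-\tfrac12 u^2$ and $g_{-2}(u)=u'-2u^2$ each equal their initial value times an exponential of $(A+2k)\int_0^t u$; with $u(0)=0$, $u'(0)\ne 0$ both initial values equal $u'(0)\ne 0$, so both $g_{-1/2}(u)$ and $g_{-2}(u)$ are nowhere zero. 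Subtracting, $g_{-1/2}(u)-g_{-2}(u)=\tfrac32 u^2$, giving $u^2=\tfrac23\big(g_{-1/2}(u)-g_{-2}(u)\big)$, and adding-with-weights one can solve $u'$ explicitly; substituting back yields a closed first-order equation for $u$, from which one reads off that $u$ stays bounded and exists for all $t$ (the key being that, because $B<0$, the energy $e(u)\ge\frac12 u^4$ forces $|u|$ to stay bounded once one controls $\int u$, and the exponential formula \eqref{eq:7:2:3} provides exactly that control). Concretely I expect the argument to run: from $e(u(t))=e(u(0))+5\int_0^t u(u')^2\,ds$ and the sign information on $g_k$ one bounds $\int_0^t u\,ds$ uniformly, hence $u$ and $u'$ are globally bounded, hence the solution is global; the periodic solutions promised in the abstract then come from choosing $u(0)\ne 0$ appropriately, but for this theorem only the stated $u(0)=0$, $u'(0)\ne 0$ case is needed. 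The delicate point in this case is converting the sign/exponential information on the two factors $g_k$ into an honest a priori bound on $|u|$ valid for all $t\in\R$ simultaneously in both time directions.
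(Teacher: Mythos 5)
Your overall strategy coincides with the paper's: pass to $u=f'/f$, write (\ref{eq:5:6}) as $u''=Auu'+Bu^3$, and exploit the energy $e(u)$ and the factored quantities $g_k(u)=u'+ku^2$ of Lemma \ref{lemma:7:2} together with the comparison Lemma \ref{lemma:6:1} and the explosive model equations of Proposition \ref{proposition:6:3}. But in both hard cases you stop exactly where the real work begins, and you say so yourself (``this last step \dots is the main obstacle'', ``I expect the argument to run \dots''). For $m\ge5$, $m\ne8$, the missing content is a case analysis on the initial data: the paper shows (i) if $u'(0)\ge0$ one reduces to $u(0)>0$, $u'(0)>0$, where $e(u(t))$ is monotone increasing, giving $(u')^2\ge\tfrac{B}{2}u^4+2e_0$ and blow-up via Proposition \ref{proposition:6:3}; (ii) if $u'(0)<0$ and $g_{k_+}(u(0))\le0$ the trajectory is trapped in the invariant region $u<0$, $u'<0$, $g_{k_+}(u)<0$, where $u'<-k_+u^2$ forces Riccati-type blow-up; (iii) if $u'(0)<0$ and $g_{k_+}(u(0))>0$ one compares with the logistic equation $v'=g_0-k_+v^2$ (Propositions \ref{proposition:case7:4}--\ref{proposition:case7:6}); and $m\ge9$ follows by the reflection $v(t)=u(-t)$ (Theorem \ref{theorem:case9:1}). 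None of these invariant-region arguments appear in your sketch, and the single inequality ``$(u')^2$ dominates $cu^4-C$'' is simply not available for all initial data when $A\ne0$, since the energy can decrease. Likewise for $m=3$ the needed two-stage argument --- first show the orbit with $u(0)=0$, $u'(0)<0$ reaches a point where $u<0$, $u'=0$ in finite time (Proposition \ref{proposition:case3:1}), then show that from such a point $g_{k_1}(u)<0$ is preserved, whence $u(0)<u(t)<u(0)/(1+k_1u(0)t)<0$ and $u\to0$ (Proposition \ref{proposition:case3:2}) --- is replaced by the identity $u^2=\tfrac23\bigl(g_{-1/2}(u)-g_{-2}(u)\bigr)$, which cannot by itself bound $u$: bounding the $g_k$'s already requires controlling $\int_0^t u$, which is precisely what you are trying to prove.

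There are also computational errors that would need repair even if the structure were filled in. The sign of $A=-\frac{m-8}{m-2}$ is reversed: $A>0$ for $3\le m\le7$ and $A<0$ for $m\ge9$, not the other way around. The roots of $2k^2+Ak-B=0$ are $k=\frac{m-4}{2(m-2)}$ and $k=-\frac{2}{m-2}$, not $\frac{2}{m-2}$ and $-\frac{m-4}{m-2}$; your claimed pair has product $-B$ rather than $-B/2$ and contradicts your own correct values $k\in\{-\tfrac12,-2\}$ at $m=3$. For $m=4$ the equation is $u''=+2uu'$, so the first integral is $u'=u^2+C$, whose bounded global solutions are $u=-b\tanh(bt+c)$; your version $u'=b^2-u^2$ would give $+b\tanh(bt+c)$. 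The $m=8$ case is the only one you complete correctly.
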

The proof of Theorem \ref{theorem:7:1} is very long, 
so we should divide Theorem \ref{theorem:7:1} into several theorems as follows:  
The part (\ref{item:theorem:7:1:1}) of Theorem \ref{theorem:7:1} consists of three theorems, 
Theorem \ref{theorem:case8:2},
\ref{theorem:case7:1} and \ref{theorem:case9:1}.
And (\ref{item:theorem:7:1:2}) in Theorem \ref{theorem:7:1} corresponds to Theorem \ref{theorem:case4:1}, 
and (\ref{item:theorem:7:1:3}) in Theorem \ref{theorem:7:1} corresponds to Theorem \ref{theorem:case3:2}.
\par\vspace{\baselineskip}\par
First, we write the ODE (\ref{eq:5:6}) as 
\begin{equation} 
  \label{eq:7:1}
  u''=A u u'+B u^3, 
\end{equation}
where the relations of values or signs of 
$A=-\frac{m-8}{m-2}$ and $B=\frac{2(m-4)}{(m-2)^2}$ are given in the following table: 
\begin{center}
  \begin{tabular}{l | c | c | c | c | c}
    \noalign{\hrule height0.8pt}
    \hfil {} & $m=3$ & $m=4$ & $m=5,6,7$ & $m=8$ & $m\geq 9$\\ \hline
    $A$ & $+$ & $+$ & $+$ & $0$ & $-$\\
    $B$ & $-$ & $0$ & $+$ & $+$ & $+$ \\
    \noalign{\hrule height 0.8pt}
  \end{tabular}
\end{center}
In the case of $m = 3$, we also have $A^2 + 8B > 0$.
\subsection{The case of $A=0$ and $B>0$ $(m=8)$}
In this case, due to (\ref{eq:7:2:1}) of Lemma \ref{lemma:7:2}, we have immediately 
\begin{lemma}
  \label{lemma:case8:1}
  Assume that $A=0$ and $B>0$.
  If $u$ is a solution of (\ref{eq:7:1}), then 
  $e(u(t))$  is constant along the solution $u$, 
  that is, 
  \begin{equation}
    \label{eq:case8:1:1}
    u'(t)^2-\frac{B}{2}u(t)^4=u'(0)^2-\frac{B}{2} u(0)^4 \defqe 2e_0.
  \end{equation}
\end{lemma}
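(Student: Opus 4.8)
The plan is to apply part (\ref{eq:7:2:1}) of Lemma \ref{lemma:7:2} with $A = 0$, $B > 0$. Since Lemma \ref{lemma:7:2} tells us that $\frac{d}{dt} e(u(t)) = A u(t) (u'(t))^2$, setting $A = 0$ immediately gives $\frac{d}{dt} e(u(t)) = 0$, so $e(u(t))$ is constant along any solution. Writing out the definition $e(u(t)) = \frac12 (u'(t))^2 - \frac{B}{4}(u(t))^4$ and evaluating at $t = 0$, constancy yields
\begin{displaymath}
  \tfrac12 u'(t)^2 - \tfrac{B}{4} u(t)^4 = \tfrac12 u'(0)^2 - \tfrac{B}{4} u(0)^4,
\end{displaymath}
and multiplying through by $2$ gives exactly (\ref{eq:case8:1:1}) with $2e_0 \defeq u'(0)^2 - \frac{B}{2} u(0)^4$. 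The claimed constant $2e_0$ matches the right-hand side by its very definition.

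There is essentially no obstacle here: the statement is a direct specialization of a lemma already proved in the excerpt. The only small point of care is the bookkeeping of the factor of $2$ — Lemma \ref{lemma:7:2} uses $e(u) = \frac12 (u')^2 - \frac{B}{4} u^4$, whereas the display in Lemma \ref{lemma:case8:1} is stated in terms of $u'(t)^2 - \frac{B}{2} u(t)^4$, which is precisely $2 e(u(t))$. So I would simply note that $2 e(u(t))$ is constant, which is what (\ref{eq:case8:1:1}) asserts, and define $2 e_0$ as its value at $t = 0$.

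To present the proof cleanly I would first invoke Lemma \ref{lemma:7:2} to get $\frac{d}{dt} e(u(t)) = A u(t) (u'(t))^2 = 0$ when $A = 0$; then conclude $e(u(t)) \equiv e(u(0))$; then unwind the definition and multiply by $2$. No comparison theorem, no elliptic functions, and no properties of $B > 0$ beyond the fact that the quantity $e$ is well defined are needed for this particular lemma — the hypothesis $B > 0$ is recorded here only because it is the standing assumption of the subsection ($m = 8$) and will be used in the subsequent analysis of the explosion behavior of solutions.
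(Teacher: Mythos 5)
Your proposal is correct and follows exactly the paper's route: the authors introduce the lemma with ``due to (\ref{eq:7:2:1}) of Lemma \ref{lemma:7:2}, we have immediately,'' i.e.\ they also just set $A=0$ in $\frac{d}{dt}e(u(t))=Au(t)(u'(t))^2$ to conclude $e$ is constant. Your extra remarks on the factor of $2$ and on the role of $B>0$ are accurate bookkeeping and do not change the argument.
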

Then, we obtain 
\begin{theorem}
  \label{theorem:case8:2}
  In the case that $A=0$ and $B>0$, the equation (\ref{eq:7:1})
  has no global solutions defined on the whole line $\R$
  except only the trivial solution $u(t)\equiv 0$. 
  Hence the equation (\ref{eq:0}) has no global solutions on $\R$ 
  except only the trivial solutions $f(t) \equiv C$.
\end{theorem}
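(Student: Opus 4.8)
The plan is to exploit the conserved energy from Lemma \ref{lemma:case8:1} together with the explicit solvability of the resulting first-order ODE in terms of the Jacobi $\sn$-function established in Propositions \ref{proposition:6:2} and \ref{proposition:6:3}. When $A=0$ and $B>0$, equation (\ref{eq:7:1}) reads $u''=Bu^3$, and Lemma \ref{lemma:case8:1} gives the first integral $u'(t)^2 = \frac{B}{2}u(t)^4 + 2e_0$ with $2e_0 = u'(0)^2 - \frac{B}{2}u(0)^4$ a constant determined by the initial data. So any global solution on $\R$ satisfies $v'(t)^2 = A_0 v(t)^4 + C_0$ with $A_0 = B/2 > 0$ and $C_0 = 2e_0 \in \R$.

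First I would dispose of the trivial case: if $e_0 = 0$, then $u'(t)^2 = \frac{B}{2}u(t)^4$, and either $u \equiv 0$, or near a point where $u \ne 0$ we may separate variables, $|u'| = \sqrt{B/2}\,u^2$, whose solutions are $u(t) = \pm(\sqrt{B/2}\,(t_0 - t))^{-1}$, which blow up in finite time and hence are not global. Next, if $e_0 > 0$, then (on any interval where $u' > 0$, and by continuity $u'$ cannot vanish since $u'^2 \geq 2e_0 > 0$) the function $u$ is strictly monotone and solves $v'(t) = \sqrt{A_0 v^4 + C_0}$ with $A_0, C_0 > 0$; by Proposition \ref{proposition:6:3} (equation (\ref{eq:6:18}), after clearing denominators to reduce to positive integer coefficients, or directly by the scaling argument given in its proof) every such solution explodes in finite time, so it is not global. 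Finally, if $e_0 < 0$, set $C_0 = -|C_0|$; then $v'^2 = A_0 v^4 - |C_0|$, which forces $A_0 v(0)^4 \geq |C_0|$, i.e. the hypothesis of (\ref{eq:6:19}) holds, and again by Proposition \ref{proposition:6:3} the solution explodes in finite time. In every non-trivial case the maximal existence interval is bounded, contradicting globality; hence $u \equiv 0$ is the only global solution.

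The translation back to (\ref{eq:0}) is immediate from Proposition \ref{proposition:5:1}: a global positive $C^\infty$ solution $f$ of (\ref{eq:0}) on $\R$ would yield the global solution $u = f'/f$ of (\ref{eq:5:6}) (which for $m=8$ is (\ref{eq:7:1}) with $A=0$, $B>0$), forcing $u \equiv 0$, hence $f' \equiv 0$ and $f \equiv C$.

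The main obstacle is matching the constants in the first integral $v'^2 = A_0 v^4 + C_0$ to the precise hypotheses of Proposition \ref{proposition:6:3}, which is stated for \emph{positive integers} $A$ and $C$: one must rescale $v$ (replace $v$ by $\lambda v$ and $t$ by $\mu t$) to normalize $A_0$ and $|C_0|$ to integers — or, more cleanly, invoke the scaling reduction already carried out inside the proof of Proposition \ref{proposition:6:3}, which shows every solution of $v'^2 = A v^4 \pm C$ is of the form $v(t) = k\,w(\ell t + t_0)$ or $k\,z(\ell t + t_0)$ and therefore inherits the poles of the $\sn$-function along the lines of angle $\pm\pi/4$, forcing finite-time blow-up on the real axis. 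Care is also needed at points where $u' = 0$ when $e_0 \geq 0$: for $e_0 > 0$ this cannot happen, and for $e_0 = 0$ it forces $u = 0$ there, which by uniqueness gives $u \equiv 0$; so the sign of $u'$ is constant on the non-trivial branches and the separation-of-variables step is legitimate.
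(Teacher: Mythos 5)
Your proposal is correct and follows essentially the same route as the paper: reduce via the conserved energy of Lemma \ref{lemma:case8:1} to the first-order equation $(u')^2=\tfrac{B}{2}u^4+2e_0$, invoke Proposition \ref{proposition:6:3} for $e_0\neq 0$, and solve explicitly for $e_0=0$. The only cosmetic difference is that when $e_0<0$ and $u'(0)=0$ the strict inequality required in (\ref{eq:6:19}) fails at $t=0$ itself, which the paper (and you, implicitly) handle by shifting the base point slightly, since $u''(0)=Bu(0)^3\neq 0$ forces $u'$ to become nonzero immediately.
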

\begin{proof}
  Let $I = [0, T)$ be a maximal interval to exists the solution of (\ref{eq:7:1}) 
  with the initial value $u(0)$, $u'(0)$.
  Assume $u(0) = 0$ and $u'(0) > 0$ ($u(0) = 0$ and $u'(0) < 0$), 
  then there exists a positive number $\delta > 0$ 
  such that $u(t) > 0$ and $u'(t) > 0$ ($u(t) < 0$ and $u'(t) < 0$) 
  for $t \in (0, \delta)$, respectively.
  On the other hand, assume $u(0) > 0$ and $u'(0) = 0$ ($u(0) < 0$ and $u'(0) = 0$), 
  then there exists a positive number $\delta > 0$ 
  such that $u(t) > 0$ and $u'(t) > 0$ ($u(t) < 0$ and $u'(t) < 0$) for $t \in (0, \delta)$, respectively, 
  since $u''(t) = B (u(t))^3$ and $B > 0$.
  Hence, we may assume that 
  $u(0) u'(0) \not= 0$ and 
  $u(t)$ ($u'(t)$) has the same sign as $u(0)$ ($u'(0)$) for any $t \in I$, respectively.
  \par
  First we assume that $e_0 \not= 0$ and $u'(0) \not= 0$.
  In this case, by (\ref{eq:case8:1:1}), 
  we may show that $u$ satisfies the ODE:
  \begin{equation}
    \label{eq:case8:1:2}
    \begin{aligned}
      &u'(t) 
      = +\sqrt{\frac{B}{2}(u(t))^4 + e_0}, \quad \text{ if } u'(0) > 0, 
      \\
      &u'(t) 
      = -\sqrt{\frac{B}{2}(u(t))^4 + e_0}, \quad \text{ if } u'(0) < 0, 
    \end{aligned}
  \end{equation}
  with
  \begin{math}
    \frac{B}{2}(u(0))^2 + e_0 = (u'(0))^2 > 0.
  \end{math}
  By Proposition \ref{proposition:6:3}, 
  the maximal interval to exists the solution of (\ref{eq:case8:1:2}) 
  $I = [0, T)$ is finite.
  \par
  Next, we assume that $e_0 = 0$ and $u'(0) \not= 0$.
  In this case, $u$ satisfies
  \begin{equation}
    \label{eq:case8:1:3}
    (u'(t))^2= \frac{B}{2} (u(t))^4.
  \end{equation}
  The ODE (\ref{eq:case8:1:3}) is easily solved, 
  and the solution is
  \begin{displaymath}
    u(t) = \frac{u(0)}{1 \pm \sqrt{(B/2)} u(0) t}.
  \end{displaymath}
  We note that the solution blows up in finite time unless $u(0) = 0$, 
  implying $u \equiv 0$.
  Finally, by the definition of $e_0$, 
  $e_0 = u'(0) = 0$ implies $u(0) = 0$ and $u \equiv 0$.
\end{proof}

\begin{figure}[htbp]
  \centering
  \input{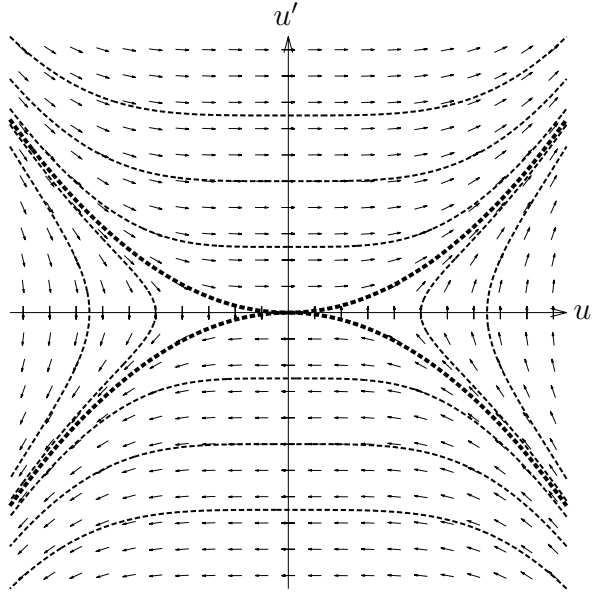}
  \caption{
    The case of $m = 8$.
    \newline 
    All dashed curves are trajectories of blow-up solutions. 
    Thick dashed curves are $u' = k_\pm u^2$.
  }
  \label{fig:case_of_8:1}
\end{figure}
\subsection{The case of $A>0$ and $B>0$ ($5 \le m \le 7$)}
\label{section:case_of_7}
\begin{proposition}
  \label{proposition:case7:4}
  Under the condition $A>0$ and $B>0$, 
  solutions of ODE (\ref{eq:7:1}) with initial values $u'(0) \ge 0$ 
  blow up in finite time.
\end{proposition}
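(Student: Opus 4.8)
Here is how I would go about proving Proposition~\ref{proposition:case7:4}.

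The plan is to divide the analysis into three regimes dictated by the two roots $k_{\pm}=\tfrac14\bigl(-A\pm\sqrt{A^{2}+8B}\bigr)$ of $2k^{2}+Ak-B=0$: since $A>0$ and $B>0$ one has $k_{+}>0>k_{-}$, while $A+2k_{\pm}=\tfrac12\bigl(A\pm\sqrt{A^{2}+8B}\bigr)$ carries the same sign as $k_{\pm}$. The constant solution $u\equiv0$ is excluded throughout. The two engines are the identities of Lemma~\ref{lemma:7:2}, used with $k=k_{+}$ and with $k=k_{-}$: the energy identity $\frac{d}{dt}e(u(t))=A\,u(t)\,u'(t)^{2}$ where $e(u)=\tfrac12 u'^{2}-\tfrac B4 u^{4}$, and the representation $g_{k}(u(t))=g_{k}(u(0))\exp\bigl((A+2k)\int_{0}^{t}u\bigr)$ where $g_{k}(u)=u'+ku^{2}$.

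\emph{Regime I: $u(t_{0})>0$ and $u'(t_{0})\ge0$ at some time $t_{0}$.} Whenever $u>0$ and $u'\ge0$ one has $u''=Auu'+Bu^{3}\ge Bu^{3}>0$; a short argument ruling out that $u'$ returns to $0$ then shows $u$ and $u'$ are strictly positive and strictly increasing for all $t>t_{0}$. Hence $\frac{d}{dt}e(u(t))=Auu'^{2}$ is bounded below by a positive constant beyond $t_{0}$, so either the solution already blows up or $e(u(t_{2}))\ge0$ for some finite $t_{2}$; from then on $u'\ge\sqrt{B/2}\,u^{2}$, so $\frac{d}{dt}(1/u)\le-\sqrt{B/2}$ and $1/u$ reaches $0$ in finite time, which is impossible unless $u$ has already become unbounded. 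Thus $u$ blows up forward in finite time (alternatively one may compare with $v'=\sqrt{B/2}\,v^{2}$ via Lemma~\ref{lemma:6:1}).

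\emph{Regime II: $u(t_{0})<0$ and $u'(t_{0})=0$ at some time $t_{0}$.} Then $u''(t_{0})=Bu(t_{0})^{3}<0$, so $u$ strictly decreases for $t>t_{0}$. Now apply the $g_{k_{+}}$-identity: $g_{k_{+}}(u(t_{0}))=k_{+}u(t_{0})^{2}>0$, and since $\int_{t_{0}}^{t}u<0$ while $A+2k_{+}>0$ the exponential factor lies in $(0,1)$, whence $0<g_{k_{+}}(u(t))<k_{+}u(t_{0})^{2}$. This yields the crucial \emph{lower} bound $-u'(t)>k_{+}\bigl(u(t)^{2}-u(t_{0})^{2}\bigr)$; as soon as $u(t)^{2}\ge2u(t_{0})^{2}$ — which occurs in finite time, for otherwise $u$ would converge to a negative limit $\ell$, forcing (by boundedness of $u$, $u'$ and Barbalat) $u'\to0$ and hence $u''\to B\ell^{3}<0$, a contradiction — we obtain $\frac{d}{dt}(1/u)\ge k_{+}/2$, so $1/u$ increases to $0$ in finite time and $u\to-\infty$ in finite time.

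\emph{Reduction and the main obstacle.} If $u(0)\ge0$, Regime~I applies already at $t_{0}=0$. If $u(0)<0$, split by the sign of $g_{k_{-}}(u(0))=u'(0)+k_{-}u(0)^{2}$: when it is positive, the $g_{k_{-}}$-identity gives $u'(t)\ge|k_{-}|u(t)^{2}+g_{k_{-}}(u(0))>0$ while $u<0$, so $u$ reaches $0$ with positive derivative and Regime~I takes over; when it is negative (in particular when $u'(0)=0$), a monotonicity argument together with $|g_{k_{-}}(u(t))|\ge|g_{k_{-}}(u(0))|$ while $u<0$ forces $u'$ to vanish before $u$ can return to $0$, so Regime~II takes over; when it is zero, $g_{k_{-}}(u)\equiv0$, hence $u'=-k_{-}u^{2}$ integrates to $u(t)=u(0)/(1+k_{-}u(0)t)$, which becomes singular at $t=-1/(k_{-}u(0))$ — forward in time when $u(0)>0$, backward in time when $u(0)<0$, so still not global on $\R$. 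I expect the genuine obstacle to be Regime~II, i.e.\ the case $u(0)<0$: in the region $\{u<0,\ u'<0\}$ the energy identity only bounds $|u'|$ from above, which can merely slow — not prevent — a blow-up, whereas the lower bound $|u'|\gtrsim u^{2}$ that actually forces it is available only from the $g_{k_{+}}$-identity; moreover one must separately rule out, using the degenerate (nilpotent) linearization at the origin and the sign of $u''$ on the axes, that a trajectory with $u<0$, $u'\ge0$ survives for all time, and the explicit family $u=u(0)/(1+k_{-}u(0)t)$ with $u(0)<0$ shows that for general data with $u'(0)\ge0$ the conclusion must be read as non-existence on all of $\R$, forward blow-up failing exactly on this one-parameter family.
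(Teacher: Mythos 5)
Your argument is correct, but for the half of the analysis concerning $u(0)<0$ it takes a genuinely different route from the paper. For $u(0)\ge 0$ you and the paper do essentially the same thing: monotonicity of $e(u(t))$ from Lemma \ref{lemma:7:2} gives a lower Riccati-type bound on $u'$, and forward blow-up follows; the only difference is that you close the argument with the elementary $\frac{d}{dt}(1/u)\le-\sqrt{B/2}$ computation instead of quoting the comparison Lemma \ref{lemma:6:1} together with Proposition \ref{proposition:6:3} on the elliptic equation $v'=\sqrt{Av^4+C}$. For $u(0)<0$, however, the paper disposes of the entire case in one line by setting $v(t)=-u(-t)$, which again solves \eqref{eq:7:1} with $v(0)>0$, $v'(0)\ge0$, and concludes that $u$ blows up \emph{backward} in finite time; you instead run a forward-in-time analysis through both invariants $g_{k_\pm}$, which is considerably longer (and leaves a few standard persistence-of-sign and Barbalat-type steps compressed) but buys strictly more information: it shows that forward blow-up in fact occurs for all such data except on the one-parameter family $g_{k_-}(u(0))=0$, $u(0)<0$, i.e.\ $u(t)=u(0)/(1+k_-u(0)t)$, which is forward-global and singular only at a negative time. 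Your closing observation is therefore exactly right and consistent with the paper's own proof: since that family satisfies $u'(0)>0$, the conclusion of Proposition \ref{proposition:case7:4} cannot mean forward blow-up for every admissible datum, and must be read (as the paper implicitly does via the time reversal) as the statement that the maximal interval of existence is bounded on at least one side, which is all that Theorem \ref{theorem:case7:1} needs.
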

\begin{proof}
  First, we assume $u(0) > 0$ and $u'(0) > 0$.
  Let $I = [0, T)$ be the maximal interval of existence for the solution $u$ that satisfies $u(t) > 0$, $u'(t) > 0$.
  Since $u''(t) = A u'(t) u(t) + B (u(t))^3 > 0$, 
  for any $t \in I$, we obtain $u(t) > 0$ and $u'(t) > 0$.
  Therefore, by Lemma \ref{lemma:7:2}, $e(u(t))$ is monotone increasing.
  That is, $e(u(t)) > e(u(0)) \defqe e_0$ for $t \in I$, 
  and the solution $u$ satisfies
  \begin{displaymath}
    u'(t) > \sqrt{\frac{B}{4}(u(t))^4 + e_0}.
  \end{displaymath}
  Hence the solution $u$ blows up in finite time, by Lemma \ref{lemma:6:1} and Proposition \ref{proposition:6:3}.
  \par
  In case of $u(0) = 0$ and $u'(0) > 0$, 
  since there exists a positive number $\delta > 0$ such that $u(t) > 0$, $u'(t) > 0$ for $t \in (0, \delta)$, 
  we may assume $u(t) > 0$ and $u'(0) > 0$, and the solution blows up in finite time.
  \par
  In case of $u(0) > 0$ and $u'(0) = 0$, 
  since $u''(0) = B(u(0))^3 > 0$, 
  there exists a positive number $\delta > 0$ such that $u(t) > 0$, $u'(t) > 0$ for $t \in (0, \delta)$.
  Therefore we may also assume $u(t) > 0$ and $u'(0) > 0$, and the solution blows up in finite time.
  \par
  Finally, in case of $u(0) < 0$ and $u'(0) \ge 0$, 
  define $v(t) = -u(-t)$, then $v$ also satisfies the ODE (\ref{eq:7:1}) with same $A$ and $B$, 
  and we have $v(0) > 0$, $v'(0) \ge 0$.
  Therefore $v$ also blows up in finite time.
\end{proof}
If $A > 0$ and $B > 0$, the quadratic equation (\ref{eq:7:2:0}) has two real solutions 
$k_-$ and $k_+$ satisfying $k_- < 0 < k_+$, and $A_{k-} < 0 < A_{k+}$, 
where $A_k = A + 2k$.
\begin{proposition}
  \label{proposition:case7:5}
  Under the condition $A>0$ and $B>0$, 
  solutions of ODE (\ref{eq:7:1}) with initial values $u'(0) < 0$ and $g_{k+}(u(0)) \le 0$
  blow up in finite time.
\end{proposition}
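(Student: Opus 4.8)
The plan is to exploit the exponential representation (\ref{eq:7:2:3}) of Lemma \ref{lemma:7:2}, which freezes the sign of $g_{k+}(u(\cdot))$ along the solution, and then to read off a Riccati-type differential inequality for $u$.

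First I would apply (\ref{eq:7:2:3}) with $k=k_+$: on the maximal interval of existence $I$,
\[
  g_{k+}(u(t)) \;=\; g_{k+}(u(0))\,\exp\!\Bigl(A_{k+}\int_0^t u(s)\,ds\Bigr),
\]
so, since the exponential factor is positive and $g_{k+}(u(0))\le 0$ by hypothesis, $g_{k+}(u(t))\le 0$ for all $t\in I$. Unwinding the definition of $g_{k+}$, this says $u'(t)=g_{k+}(u(t))-k_+\,u(t)^2\le -k_+\,u(t)^2\le 0$ (recall $k_+>0$); in particular $u$ is non-increasing on $I$.

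Next I would split into two cases according to the sign of $u$ on $I$. If $u(t_0)<0$ for some $t_0\in I$, then monotonicity gives $u(t)\le u(t_0)<0$ for all $t\in I$ with $t\ge t_0$, so $\phi:=-1/u$ is positive there and, from $u'\le -k_+u^2$, satisfies $\phi'=u'/u^2\le -k_+$; integrating, $\phi(t)\le \phi(t_0)-k_+(t-t_0)$, which is negative once $t>t_0+\phi(t_0)/k_+$ --- impossible since $\phi>0$, so $I$ cannot reach that far to the right. If instead $u\ge 0$ on all of $I$, then $u'(0)<0$ forces $u(0)>0$, whence $u(t)\ge u(0)>0$ for all $t\in I$ with $t\le 0$; then $\psi:=1/u$ is positive there and $\psi'=-u'/u^2\ge k_+$, so $\psi(t)\le \psi(0)+k_+t$ for $t\le 0$, which is negative for $t<-\psi(0)/k_+$ --- again impossible, so $I$ cannot reach that far to the left. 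In either case $I\subsetneq\R$, i.e.\ $u$ blows up in finite time.

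The content here is mostly bookkeeping rather than analysis: the hypotheses $u'(0)<0$, $g_{k+}(u(0))\le 0$ do not pin down the sign of $u(0)$, and accordingly the blow-up is forward in time when $u$ ever dips below $0$ and backward in time when $u$ stays positive (for the boundary family $g_{k+}(u)\equiv 0$ with $u>0$, namely $u(t)=u(0)/(1+k_+u(0)t)$, it is only the past endpoint that is finite), so one must carry both directions along and read ``blows up in finite time'' in the same weak sense as in Proposition \ref{proposition:6:3}. A minor technical point: rather than applying the comparison Lemma \ref{lemma:6:1} to the non-Lipschitz map $p\mapsto -k_+p^2$, it is cleaner to pass, as above, to the linear inequalities $\phi'\le -k_+$ and $\psi'\ge k_+$ --- equivalently, to compare directly with the explicit solution $w(t)=w_0/(1+k_+w_0t)$ of $w'=-k_+w^2$, which is singular at $t=-1/(k_+w_0)$.
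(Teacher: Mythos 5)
Your proof is correct, and it rests on the same two pillars as the paper's own argument: the sign--propagation identity (\ref{eq:7:2:3}) of Lemma \ref{lemma:7:2}, which freezes $g_{k_+}(u(t))\le 0$ on the whole interval of existence, and the resulting Riccati inequality $u'\le -k_+u^2$ measured against the singular orbit $u_0/(1+k_+u_0t)$. The organization, however, is genuinely different. The paper splits on the sign of $u(0)$ and on whether $g_{k_+}(u(0))$ vanishes: it solves the boundary orbit $g_{k_+}\equiv 0$ explicitly, proves a persistence statement ($u<0$, $u'<0$, $g_{k_+}(u)<0$ are maintained) when $u(0)<0$, invokes the comparison theorem (Lemma \ref{lemma:6:1}) against $v'=-k_+v^2$, and reduces $u(0)>0$ to the previous cases via the reflection $v(t)=-u(-t)$. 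Your dichotomy on whether $u$ ever becomes negative on the maximal interval absorbs all of these subcases at once, and integrating $\pm 1/u$ in place of the comparison lemma is a genuine (if small) gain in rigor, since $p\mapsto -k_+p^2$ is only locally Lipschitz whereas Lemma \ref{lemma:6:1} is stated under a global Lipschitz hypothesis. You are also right --- and more explicit than the paper --- that when $u$ stays nonnegative (in particular on the boundary orbit with $u(0)>0$) the singularity occurs only backward in time, so ``blows up in finite time'' must be read as ``the maximal interval of existence is a proper subinterval of $\R$''; this is exactly how the paper itself uses the phrase in its $u(0)>0$ subcase, and it is all that Theorem \ref{theorem:case7:1} needs. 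The only point worth making explicit is the standard final step from ``the maximal interval is bounded on one side'' to ``the solution blows up'': if $u$ remained bounded near the finite endpoint, the equation (\ref{eq:7:1}) would bound $u''$ and hence $u'$, allowing the solution to be continued --- so unboundedness is automatic.
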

\begin{proof}
  If $u(0) = 0$ and $g_{k+}(u(0)) = 0$, then $u'(0) = 0$, 
  hence we may assume either 
  $u(0)$ 
  or $g_{k+}(u(0))$ is not zero.
  If $u(0)$ satisfies $g_{k+}(u(0)) = 0$, then, by (\ref{eq:7:2:3}), 
  we obtain $g_k(u(t)) \equiv 0$, i.e., 
  \begin{equation}
    \label{eq:case7:5:1}
    u'(t) = -k_+ (u(t))^2, \quad t > 0, \quad k_+ > 0.
  \end{equation}
  The ODE (\ref{eq:case7:5:1}) is easily solved, and 
  the solution is 
  \begin{displaymath}
    u(t) = \frac{u(0)}{1 + k_+ u(0) t}.
  \end{displaymath}
  Therefore, if $u(0) > 0$, then the solution blows up in finite time $T = -1/(k_+ u(0))$.
  If $u(0) < 0$, consider the backward solution (i.e., consider $v(t) = -u(-t)$), 
  then we also obtain similar result.
  \par
  Next we assume $u(0) < 0$ and $g_{k+}(u(0)) < 0$.
  Let $I = [0, T)$ be the maximal interval of existence for the solution $u$ that satisfies 
  $u(t) < 0$, $u'(t) < 0$ and $g_{k+}(u(t)) < 0$.
  Assume there exists $T > 0$ such that
  $u$ satisfies either
  $u(T) = 0$, 
  $u'(T) = 0$ 
  or 
  $g_{k+}(u(T)) = 0$.
  Since $u'(t) < 0$ for $t \in I$, 
  $u$ is monotone decreasing, 
  and then we obtain $u(T) < u(0) < 0$.
  Moreover, since $u(t) < 0$ for $t \in I$, 
  we also obtain $g_{k_+}(u(t)) < 0$ for $t \in I$, by (\ref{eq:7:2:3}).
  Thus we have $g_{k_+}(u(T)) < 0$, 
  which implies $u'(T) < - k_+ (u(T))^2 < 0$.
  Therefore 
  $u(t) < 0$, $u'(t) < 0$ and $g_{k+}(u(t)) < 0$
  hold provided the solution exists.
  \par
  Since for any $t > 0$, $g_{k+}(u(t)) < 0$, 
  we obtain that
  \begin{displaymath}
    u'(t) < -k_+ (u(t))^2, \quad t > 0, \quad k_+ > 0, \quad u(0) < 0.
  \end{displaymath}
  Therefore, by Lemma \ref{lemma:6:1}.
  the solution $u$ satisfies
  \begin{math}
    u(t) < \frac{u(0)}{1 + k_+ u(0) t}, 
  \end{math}
  and blows up within $T < -1/(k_+u(0))$.
  \par
  If $u(0) = 0$, since $u'(0) < 0$, 
  there exists a positive number $\delta > 0$ such that
  $u(t) < 0$, $u'(t) < 0$, $g_{k_+}(u(t)) < 0$ for any $t \in (0, \delta)$.
  Hence we may prove a blowing up phenomena within finite time for this case.
  \par
  Finally, in case of $u(0) > 0$, 
  define $v(t) = -u(-t)$, we may apply the above arguments.
\end{proof}
\begin{proposition}
  \label{proposition:case7:6}
  Under the condition $A>0$ and $B>0$, 
  solutions of ODE (\ref{eq:7:1}) with initial values $u'(0) < 0$ and $g_{k+}(u(0)) > 0$
  blow up in finite time.
\end{proposition}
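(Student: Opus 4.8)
The plan is to show that a solution satisfying these hypotheses, followed either forward or backward in time, reaches a state with $u'=0$, and then to invoke Proposition~\ref{proposition:case7:4}. First, the hypotheses force $u(0)\neq0$: if $u(0)=0$ then $g_{k_+}(u(0))=u'(0)+k_+u(0)^2=u'(0)<0$, contradicting $g_{k_+}(u(0))>0$. Since $\widetilde u(t)\defeq-u(-t)$ preserves (\ref{eq:7:1}) together with both conditions $(\widetilde u)'(0)=u'(0)<0$ and $g_{k_+}(\widetilde u(0))=g_{k_+}(u(0))>0$ while reversing the sign of $u(0)$, it suffices to treat $u(0)>0$, the case $u(0)<0$ following by applying the conclusion to $\widetilde u$. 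So assume $u(0)>0$; the hypotheses then say exactly $-k_+u(0)^2<u'(0)<0$, i.e.\ the point $(u(0),u'(0))$ lies in the open set
\[
  R\defeq\{(u,v)\in\R^2:\ u>0,\ -k_+u^2<v<0\}
\]
for the phase system $u'=v$, $v'=u(Av+Bu^2)$ equivalent to (\ref{eq:7:1}).

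I would next record two monotonicities valid while the trajectory remains in $R$: since $v<0$ there, $u$ is strictly decreasing; and since (\ref{eq:7:2:0}) gives $B=2k_+^2+Ak_+$, hence $B/A>k_+$, on $R$ one has $v>-k_+u^2>-\tfrac{B}{A}u^2$, so $Av+Bu^2>0$ and therefore $v'=u(Av+Bu^2)>0$, i.e.\ $v$ is strictly increasing. Hence, while in $R$ the solution stays in the compact set $\{0\le u\le u(0),\ u'(0)\le v\le0\}$, so it cannot cease to exist while remaining there, and it can leave $R$ only across one of the arcs $\{u=0\}$, $\{v=-k_+u^2\}$ or $\{v=0\}$. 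By (\ref{eq:7:2:3}), $g_{k_+}(u(t))=g_{k_+}(u(0))\exp\!\bigl(A_{k_+}\!\int_0^t u(s)\,ds\bigr)$, and while $u>0$ the integral is positive and $A_{k_+}>0$, so $g_{k_+}(u(t))\ge g_{k_+}(u(0))>0$; this excludes an exit across $\{v=-k_+u^2\}$, and it also excludes an exit across $\{u=0\}$, since at such a point $u'=g_{k_+}>0$ whereas $v\le0$ forces $u'\le0$.

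It remains to exclude the trajectory staying in $R$ for all $t\ge0$. If it did, $u(t)$ would decrease to some $L\ge0$ and $v(t)$ increase to some $M\le0$; letting $t\to\infty$ in the equations, $v'\to L(AM+BL^2)$ must vanish, which for $L>0$ gives $M=-\tfrac{B}{A}L^2<-k_+L^2$, contradicting $v>-k_+u^2$. Hence $L=0$, and then $u'=v\to M$ is consistent with $u\to0$ only if $M=0$; but then $g_{k_+}(u(t))=v(t)+k_+u(t)^2\to0$, contradicting $g_{k_+}(u(t))\ge g_{k_+}(u(0))>0$. Therefore the solution leaves $R$ at a finite time $t_1>0$, necessarily across $\{v=0\}$ and with $u(t_1)>0$, i.e.\ $u'(t_1)=0$ and $u(t_1)>0$; applying Proposition~\ref{proposition:case7:4} with initial instant $t_1$ shows that the solution blows up in finite time. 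This handles $u(0)>0$, hence all cases.

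I expect the delicate point to be the exclusion of the trajectory lingering in $R$ indefinitely: the phase-plane monotonicities alone are consistent with $u,v\to0$, and it is exactly the lower bound $g_{k_+}(u(t))\ge g_{k_+}(u(0))>0$ coming from Lemma~\ref{lemma:7:2} that rules this out. The inequality $B/A>k_+$ from (\ref{eq:7:2:0}), the sign of $v'$ on $R$, and the elimination of the arcs $\{u=0\}$ and $\{v=-k_+u^2\}$ are routine.
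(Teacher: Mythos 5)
Your proof is correct, but it takes a genuinely different route from the paper's. The paper normalizes to $u(0)<0$ via the symmetry $u\mapsto -u(-t)$ and works in the region $\{u<0,\ u'<0,\ g_{k_+}>0\}$: there $\int_0^t u(s)\,ds<0$, so (\ref{eq:7:2:3}) gives $g_{k_+}(u(t))<g_0\defeq g_{k_+}(u(0))$, hence the Riccati inequality $u'<g_0-k_+u^2$ with $g_0-k_+u(0)^2=u'(0)<0$, and comparison (Lemma \ref{lemma:6:1}) against the logistic equation $v'=g_0-k_+v^2$ drives $u$ to $-\infty$ in finite time. You normalize to $u(0)>0$ instead and run a trapping-region argument in $R=\{u>0,\ -k_+u^2<v<0\}$: the monotonicities $u'<0$ and $v'>0$ (the latter correctly derived from $B/A>k_+$, a consequence of (\ref{eq:7:2:0})) confine the orbit to a compact box, the lower bound $g_{k_+}(u(t))\ge g_0>0$ from (\ref{eq:7:2:3}) seals off the arcs $\{u=0\}$ and $\{v=-k_+u^2\}$ and rules out convergence to the origin inside $R$, so the orbit must reach $\{u'=0,\ u>0\}$ in finite time, after which Proposition \ref{proposition:case7:4} finishes. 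Both arguments hinge on the same invariant $g_{k_+}$ and formula (\ref{eq:7:2:3}); the paper's is shorter, self-contained modulo the elementary logistic ODE, and gives an explicit bound on the blow-up time, whereas yours is more geometric (it matches the phase portrait of Figure \ref{fig:case_of_7:1}, showing the orbit wraps around into the region already covered by Proposition \ref{proposition:case7:4}) but inherits the elliptic-function machinery of Proposition \ref{proposition:6:3} through that appeal. One cosmetic remark: in your treatment, as in the paper's, for one sign of $u(0)$ the blow-up produced is backward in time; this is consistent with the paper's own convention (see the last step of the proof of Proposition \ref{proposition:case7:4}), where ``blows up in finite time'' means the solution fails to extend to all of $\R$.
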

\begin{proof}
  If $u(0) > 0$ and $u'(0) < 0$, 
  considering $v(t) = -u(-t)$, then we have 
  $v(0) < 0$, 
  $v'(0) < 0$ and $g_{k+}(v(0)) > 0$
  so we may assume $u(0) < 0$ without loss of generality.
  \par
  By similar arguments in Proposition \ref{proposition:case7:5}, 
  $u(t) < 0$, $u'(t) < 0$ and $g_{k+}(u(t)) > 0$
  hold provided the solution exists.
  \par
  Since $g_{k+}(u(t)) < g_{k+}(u(0)) \defqe g_0$ provided that the solution exists, 
  we obtain 
  \begin{displaymath}
    u'(t) < g_0 -k_+ (u(t))^2, 
    \quad
    g_0 > 0, 
    \quad
    k_+ > 0, 
    \quad
    u(0) < 0.
  \end{displaymath}
  The ODE $v'(t) = g_0 - k_+ (v(t))^2$ is well-known logistic equation, 
  and the solution $v$ blows up to $-\infty$ within positive finite time 
  provided $g_0 - k_+(v(0))^2 < 0$.
  Since $g_0 - k_+(u(0))^2 = u'(0) < 0$, 
  by Lemma \ref{lemma:6:1},
  $u$ blows up in finite time.
\end{proof}
\begin{theorem}
  \label{theorem:case7:1}
  In the case that $A>0$ and $B>0$, the equation (\ref{eq:7:1})
  has no global solutions defined on the whole line $\R$
  except only the trivial solution $u(t)\equiv 0$. 
  Hence the equation (\ref{eq:0}) has no global solutions on $\R$ 
  except only the trivial solutions $f(t) \equiv C$.
\end{theorem}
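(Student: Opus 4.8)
The plan is to assemble the three preceding propositions into a complete case analysis on the initial data. Let $u$ be a solution of (\ref{eq:7:1}) defined on all of $\R$; I want to show $u\equiv 0$. Suppose not, so that $(u(0),u'(0))\neq(0,0)$. Since $A>0$ and $B>0$, the quadratic (\ref{eq:7:2:0}) has a well-defined positive root $k_+>0$, so the quantity $g_{k_+}(u(0))=u'(0)+k_+(u(0))^2$ makes sense, and exactly one of the following holds: (i) $u'(0)\ge 0$; (ii) $u'(0)<0$ and $g_{k_+}(u(0))\le 0$; (iii) $u'(0)<0$ and $g_{k_+}(u(0))>0$. In case (i), Proposition \ref{proposition:case7:4} (whose proof treats precisely the nontrivial subcases $u(0)>0$, $u(0)=0$ with $u'(0)>0$, etc.) shows $u$ blows up in finite time; in case (ii), Proposition \ref{proposition:case7:5} does the same; in case (iii), Proposition \ref{proposition:case7:6} does the same. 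In every case the forward maximal interval of existence is finite, contradicting that $u$ is defined on all of $\R$. Hence $(u(0),u'(0))=(0,0)$, and by uniqueness of solutions to the initial value problem for (\ref{eq:7:1}) we get $u\equiv 0$. Conversely $u\equiv 0$ is plainly a global solution, so the trivial solution is the only global one.

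For the statement about (\ref{eq:0}) I would argue by contradiction via Proposition \ref{proposition:5:1}. Suppose $f$ is a positive $C^\infty$ solution of (\ref{eq:0}) on $\R$ that is not constant. Because $f>0$ and $f\in C^\infty(\R)$, the function $u\defeq f'/f$ is $C^\infty$ on all of $\R$, and by Proposition \ref{proposition:5:1} it solves (\ref{eq:5:6}), i.e. (\ref{eq:7:1}) with $A=-\frac{m-8}{m-2}$ and $B=\frac{2(m-4)}{(m-2)^2}$, both strictly positive for $5\le m\le 7$. By the first part, $u\equiv 0$, so $f'\equiv 0$ and $f$ is constant, a contradiction. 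Therefore the only global positive solutions of (\ref{eq:0}) on $\R$ are the constants $f(t)\equiv C$, which establishes the theorem.

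I do not expect any serious obstacle here: all the analytic content — the comparison estimates built on Lemma \ref{lemma:6:1}, the energy/auxiliary-quantity identities of Lemma \ref{lemma:7:2}, the logistic-equation comparison, and the pole structure of the Jacobi $\sn$-function fed through Proposition \ref{proposition:6:3} — has already been absorbed into Propositions \ref{proposition:case7:4}, \ref{proposition:case7:5}, and \ref{proposition:case7:6}. The remaining work is purely organizational: verifying that cases (i)–(iii) exhaust all nontrivial initial data, noting that $g_{k_+}$ is well-defined under $A,B>0$, and invoking uniqueness of the ODE to identify the constant-zero solution. If anything needs care, it is only the bookkeeping point that $(u(0),u'(0))=(0,0)$ is genuinely the single initial datum left uncovered by the three propositions, and the parallel transfer statement for (\ref{eq:0}) via the Cole–Hopf substitution $u=f'/f$.
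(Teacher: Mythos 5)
Your proposal is correct and follows essentially the same route as the paper: the proof there is exactly the three-way case split on $u'(0)\ge 0$, $u'(0)<0$ with $g_{k_+}(u(0))\le 0$, and $u'(0)<0$ with $g_{k_+}(u(0))>0$, invoking Propositions \ref{proposition:case7:4}, \ref{proposition:case7:5} and \ref{proposition:case7:6} respectively. Your additional remarks on excluding the trivial datum $(u(0),u'(0))=(0,0)$ by uniqueness and on transferring the conclusion to (\ref{eq:0}) via $u=f'/f$ and Proposition \ref{proposition:5:1} are exactly the bookkeeping the paper leaves implicit.
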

\begin{proof}
  In case of $u'(0) \ge 0$, 
  use Proposition \ref{proposition:case7:4}, 
  in case of $u'(0) < 0$ and $g_{k+}(u(0)) \le 0$, 
  use Proposition \ref{proposition:case7:5}, 
  and
  in case of $u'(0) < 0$ and $g_{k+}(u(0)) > 0$, 
  use Proposition \ref{proposition:case7:6}, 
  we obtain the claim of Theorem \ref{theorem:case7:1}.
\end{proof}

\begin{figure}[htbp]
  \centering
  \input{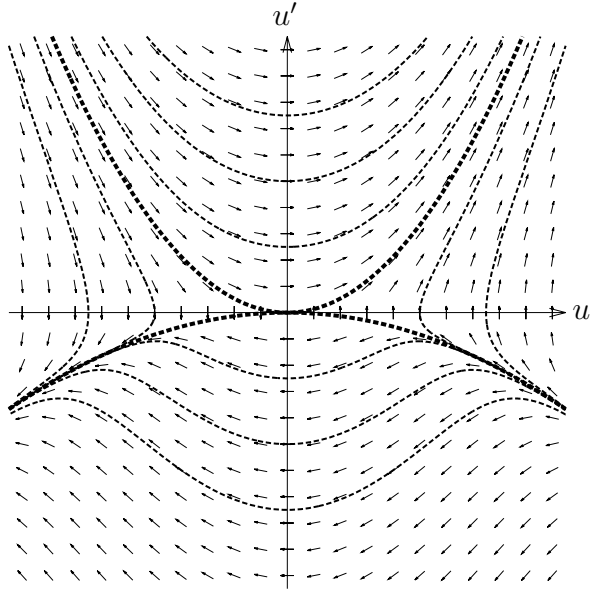}
  \caption{
    The case of $m = 5$.
  }
  \label{fig:case_of_7:1}
\end{figure}
\subsection{The case of $A<0$ and $B>0$ ($m \ge 9$)}
\begin{theorem}
  \label{theorem:case9:1}
  In the case that $A<0$ and $B>0$, the equation (\ref{eq:7:1})
  has no global solutions defined on the whole line $\R$
  except only the trivial solution $u(t)\equiv 0$. 
  Hence the equation (\ref{eq:0}) has no global solutions on $\R$ 
  except only the trivial solutions $f(t) \equiv C$.
\end{theorem}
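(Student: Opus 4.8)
The plan is to analyse (\ref{eq:7:1}) through the two first-order invariants $g_{k_\pm}$ of Lemma \ref{lemma:7:2}, in exact parallel with Section \ref{section:case_of_7}, adjusting only the places where the sign of $A$ itself enters. First I would record the algebra: since $B>0$, the quadratic (\ref{eq:7:2:0}) has two real roots $k_-<0<k_+$ (their product is $-B/2<0$), and from $2k_\pm^2+Ak_\pm-B=0$ one gets $A_{k_\pm}:=A+2k_\pm=B/k_\pm$, so again $A_{k_-}<0<A_{k_+}$, the same sign pattern as in the case $A>0$, $B>0$. I would also use the reflection $v(t):=-u(-t)$: it solves (\ref{eq:7:1}) with the same $A$, $B$, it satisfies $g_{k_\pm}(v(0))=g_{k_\pm}(u(0))$, and it is global on $[0,\infty)$ if and only if $u$ is global on $(-\infty,0]$; hence it suffices to exclude non-trivial solutions that exist on all of $[0,\infty)$ or on all of $(-\infty,0]$.

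By (\ref{eq:7:2:3}) the sign of $g_{k_\pm}(u(t))$ is constant along a solution, and $g_{k_+}(u(t))-g_{k_-}(u(t))=(k_+-k_-)\,u(t)^2\ge 0$, so $g_{k_+}(u(t))\ge g_{k_-}(u(t))$ always; in particular the combination $g_{k_+}(u(0))<0<g_{k_-}(u(0))$ never occurs. I would then split into cases according to the signs of $c_\pm:=g_{k_\pm}(u(0))$. If $c_+=0$ or $c_-=0$, then $u$ solves a Bernoulli equation $u'=-k_\pm u^2$, whose solution $u(t)=u(0)/(1+k_\pm u(0)t)$ blows up in finite time of a definite sign unless $u(0)=0$, i.e.\ $u\equiv 0$. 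If $c_+<0$ (hence $c_-\le c_+<0$ as well), then $u'<-k_+u^2\le 0$ throughout, so $u$ is strictly decreasing; when $u(0)\le 0$ it becomes negative and, comparing $w=-u$ with the solution of $y'=k_+y^2$ via Lemma \ref{lemma:6:1}, $u$ blows up to $-\infty$ in finite forward time, and the case $u(0)>0$ then follows from the reflection. The case $c_+>0$, $c_->0$, in which $u'>-k_-u^2=|k_-|u^2\ge 0$ so $u$ is strictly increasing, is symmetric to the previous one under $v(t)=-u(-t)$ and is handled the same way.

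The remaining and genuinely delicate case is $c_+>0>c_-$. At any instant with $u=0$ one would need $u'>0$ and $u'<0$ simultaneously, so here $u$ never vanishes and keeps a constant sign; by the reflection I may take $u(0)>0$, whence $u>0$ throughout, and I argue by contradiction assuming $u$ exists on $[0,\infty)$. Set $P(t):=\int_0^t u(s)\,ds$, which is non-negative and strictly increasing. By (\ref{eq:7:2:3}), $g_{k_+}(u(t))=c_+e^{A_{k_+}P(t)}$ is positive and increasing while $g_{k_-}(u(t))=c_-e^{A_{k_-}P(t)}$ is negative with $|g_{k_-}(u(t))|$ decreasing, and $u(t)^2=\bigl(g_{k_+}(u(t))-g_{k_-}(u(t))\bigr)/(k_+-k_-)$. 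If $P(t)\to\infty$, then $g_{k_+}(u(t))\to\infty$, hence $u(t)\to\infty$, and $g_{k_-}(u(t))\to 0$; writing $u'=|k_-|u^2+g_{k_-}(u(t))$ and using that $|g_{k_-}(u(t))|\le\frac{|k_-|}{2}u(t)^2$ for all large $t$, one gets $u'\ge\frac{|k_-|}{2}u^2$, so $u$ blows up in finite forward time by Lemma \ref{lemma:6:1}, a contradiction. If instead $P(t)$ stays bounded, then both exponentials converge to positive limits, so $u(t)^2$ tends to a strictly positive limit and $u(t)$ to a positive constant, forcing $\int_0^\infty u=\infty$ and contradicting the boundedness of $P$. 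Hence no such solution exists on $[0,\infty)$, and by the reflection none exists on all of $\R$. Collecting the cases, $u\equiv 0$ is the only solution of (\ref{eq:7:1}) on $\R$, and by Proposition \ref{proposition:5:1} the only global solutions of (\ref{eq:0}) are the constants $f\equiv C$.

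I expect the case $c_+>0>c_-$ to be the main obstacle: there $u$ is neither monotone nor a priori unbounded, so it cannot be dispatched by a single comparison with a Riccati equation, and the point is to play the two exponential identities against each other --- $g_{k_+}(u(t))$ grows and $g_{k_-}(u(t))$ decays along the flow, which is incompatible with existence on all of $[0,\infty)$. Equivalently, $P$ satisfies the first-order equation $(P')^2=\bigl(c_+e^{A_{k_+}P}-c_-e^{A_{k_-}P}\bigr)/(k_+-k_-)=:\Phi(P)$; whenever $u\not\equiv 0$ the function $\Phi$ tends to $+\infty$ exponentially at (at least) one of the ends $P\to\pm\infty$, and since $\int^{\pm\infty}dP/\sqrt{\Phi(P)}<\infty$ the orbit reaches that end --- so that $|u|=|P'|\to\infty$ --- in finite time; in this alternative route the only extra point to verify is the behaviour at a zero of $\Phi$, where $\Phi'\ne 0$ makes the orbit turn around transversally.
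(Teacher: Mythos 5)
Your argument is correct, but it takes a genuinely different route from the paper. The paper's entire proof of this theorem is a two-line reduction: setting $v(t)=u(-t)$ turns (\ref{eq:7:1}) with $A<0$, $B>0$ into the same equation with $-A>0$ in place of $A$, so the statement follows at once from Theorem \ref{theorem:case7:1}. You instead use the orientation-preserving symmetry $v(t)=-u(-t)$ (which keeps $A$ and $B$ fixed) and rebuild the blow-up analysis from scratch, organizing the cases by the signs of the conserved quantities $c_\pm=g_{k_\pm}(u(0))$ rather than by the signs of $u'(0)$ and $g_{k_+}(u(0))$ as in Section \ref{section:case_of_7}. The payoff of your version is that it is self-contained, treats $A>0$ and $A<0$ (with $B>0$) uniformly, and in the case $c_+>0>c_-$ replaces the paper's monotonicity bookkeeping by the cleaner dichotomy on $P(t)=\int_0^t u$, playing the growing exponential $c_+e^{A_{k_+}P}$ against the decaying one $c_-e^{A_{k_-}P}$ via $u^2=\bigl(g_{k_+}(u)-g_{k_-}(u)\bigr)/(k_+-k_-)$; the closing reformulation $(P')^2=\Phi(P)$ is a nice conceptual summary. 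The cost is length: given that Theorem \ref{theorem:case7:1} is already proved, the time reversal makes all of this unnecessary. One cosmetic caveat, shared with the paper's own use of Lemma \ref{lemma:6:1}: the comparison functions $F(v)=\pm k v^2$ are only locally Lipschitz, so the comparison should formally be applied on compact intervals where the solutions stay bounded, which suffices for the blow-up conclusions.
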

\begin{proof}
  Let $u$ be a solution of (\ref{eq:7:1}), 
  and $v(t) = u(-t)$, then 
  $v$ satisfies $v''(t) = -Av(t)v'(t) + B(v(t))^3$.
  Therefore the claim is easily obtained by Theorem \ref{theorem:case7:1}.
\end{proof}
\begin{figure}[htbp]
  \centering
  \input{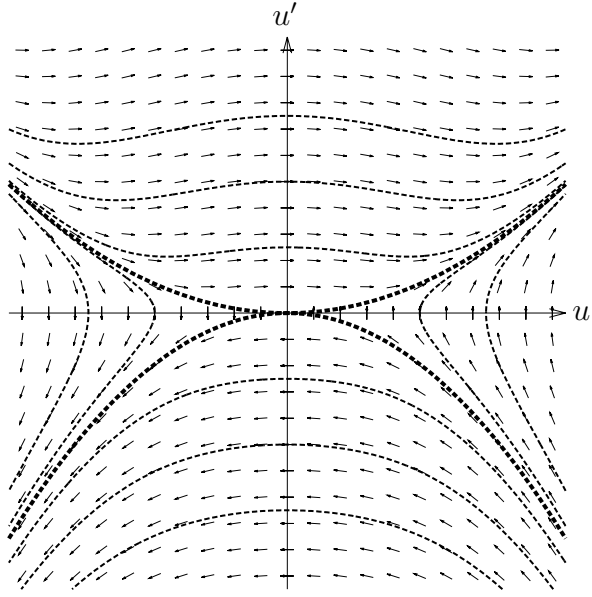}
  \caption{
    The case of $m = 11$.
  }
  \label{fig:case_of_9:1}
\end{figure}
\subsection{The case $A>0$ and $B=0$ $(m=4)$} 
Rewriting (\ref{eq:7:1}) as 
$u''(t) = (A/2) (u(t)^2)'$ and
integrating this equation, 
we obtain the following proposition.
\begin{theorem}
  \label{theorem:case4:1}
  In case of $A>0$ and $B=0$, 
  all global solutions of (\ref{eq:7:1})
  are given by 
  $u(t) = -b \tanh(b t + c)$, 
  where $b$ and $c$ are constants.
  Hence all global solutions of (\ref{eq:0})
  are given by 
  $f(x) = a/\cosh(b x + c)$, where $a > 0$.
\end{theorem}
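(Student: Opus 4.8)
The plan is to use the special structure of the case $m=4$, where $B=0$ and $A=-\frac{m-8}{m-2}=2>0$. Then (\ref{eq:7:1}) reads $u''=2uu'=(u^2)'$, which can be integrated once on any interval of existence to give the first-order (Riccati) equation
\begin{equation*}
  u'(t)=u(t)^2+C
\end{equation*}
for some real constant $C$ depending on the solution. The whole theorem then reduces to deciding, for each sign of $C$, which solutions of this first-order equation extend to all of $\R$, and to converting the answer back to $f$ via Proposition \ref{proposition:5:1}.

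The case analysis is elementary. If $C>0$, the solutions are $u(t)=\sqrt{C}\tan(\sqrt{C}(t-t_0))$, which have poles, so none is global. If $C=0$, then $u'=u^2$, whose solutions are the trivial one $u\equiv 0$ and $u(t)=-1/(t-t_0)$; only $u\equiv 0$ is global. If $C<0$, write $C=-b^2$ with $b>0$; then $u'=u^2-b^2$, and by separation of variables the solutions are the equilibria $u\equiv\pm b$, the $\coth$-branches $u(t)=-b\coth(b(t-t_0))$ for data with $|u(0)|>b$ (which blow up and are therefore discarded), and the $\tanh$-branches $u(t)=-b\tanh(b(t-t_0))$ for data with $|u(0)|<b$. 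The latter stay trapped in $(-b,b)$ and hence are global; absorbing $-bt_0$ into a constant $c$ gives exactly $u(t)=-b\tanh(bt+c)$ (with $b=0$ recovering $u\equiv 0$). This proves the statement for (\ref{eq:7:1}).

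Finally, for (\ref{eq:0}): given such a $u$, Proposition \ref{proposition:5:1} says every positive solution $f$ of (\ref{eq:0}) with $f'/f=u$ is of the form
\begin{equation*}
  f(t)=C_0\exp\!\left(\int^{t}u(s)\,ds\right)
       =C_0\exp\!\left(-\log\cosh(bt+c)+\text{const}\right)
       =\frac{a}{\cosh(bt+c)},\qquad a>0,
\end{equation*}
and conversely each such $f$ is a positive global solution. I do not expect a genuine obstacle: once the first integration is made the problem is routine. The only points needing care are the bookkeeping of the sign of the integration constant, and verifying that the $C>0$ family, the nontrivial $C=0$ solution, and the $\coth$-branches all escape to infinity in finite time and so must be excluded from the list of global solutions.
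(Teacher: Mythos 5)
Your proof follows essentially the same route as the paper: integrate $u''=(A/2)(u^2)'$ once to the Riccati equation $u'=u^2+C$, then check case by case that only the $\tanh$-branches (for $C<0$, $|u(0)|<\sqrt{|C|}$) are global, and recover $f=a/\cosh(bx+c)$ by exponentiating the integral of $u$. The one point you note but do not resolve --- the constant equilibria $u\equiv\pm b$, which are global yet not of the stated $\tanh$ form --- is treated the same way in the paper, which relegates them to a separate remark rather than the theorem itself.
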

\begin{proof}
  Integrating $u''(t) = (A/2) (u(t)^2)'$, we have
  $u'(t) = (A/2)(u(t)^2 + C)$, 
  where $C \defeq (2/A) (u'(0) - (A/2) u(0)^2)$.
  Assume $C < 0$ and $|u(0)| < \sqrt{|C|}$, 
  then 
  we easily obtain that
  \begin{equation}
    \label{eq:proposition:case4:1:1}
    u(t) 
    =
    - \sqrt{|C|} \tanh\left((A_C/2)t - \arctanh\left(\frac{u(0)}{\sqrt{|C|}}\right)\right), 
  \end{equation}
  where $A_C \defeq A\sqrt{|C|}$, and 
  (\ref{eq:proposition:case4:1:1}) is defined on whole line $\R$.
  In case of $m = 4$, $A$ is equal to $2$, and $A_C = \sqrt{|C|}$, 
  hence we may write 
  \begin{equation}
    \label{eq:proposition:case4:1:2}
    u(t) 
    =
    - b \tanh(b t + c).
  \end{equation}
  Since the solution $f$ of (\ref{eq:0}) is given by
  \begin{math}
    f(x) = \exp\left(\int_0^x u(t)\,dt\right), 
  \end{math}
  by (\ref{eq:proposition:case4:1:2}), we obtain 
  \begin{displaymath}
    f(x) 
    = 
    \frac{a}{\cosh(b x + c)}.
  \end{displaymath}
  If $C < 0$ and $|u(0)| > \sqrt{|C|}$, 
  the solution is given by 
  \begin{displaymath}
    u(t) = -\sqrt{|C|} \frac{\sqrt{|C|}\tanh(A_C t) - u(0)}{\sqrt{|C|} - u(0)\tanh(A_C t)}.
  \end{displaymath}
  However, 
  the denominator attains its zero at 
  $t = (1/A_C)\arctanh(\sqrt{|C|}/u(0))$, 
  hence this type of solution is not globally defined.
  If $C > 0$, 
  the solution is given by 
  \begin{displaymath}
    u(t) = \sqrt{C} \frac{\sqrt{C}\tan(A_C t) + u(0)}{\sqrt{C} - u(0)\tan(A_Ct)}, 
  \end{displaymath}
  hence this type of solution is not globally defined.
  In case of $C = 0$ and $u'(0)^2 \not= 0$, 
  the solution is given by 
  \begin{math}
    u(t) = \frac{-u(0)}{(A/2) u(0) t - 1}, 
  \end{math}
  hence this type of solution is also not globally defined.
\end{proof}
\begin{remark}
  \label{remark:case4:2}
  In case of $u'(0)^2 = 0$ and $u(0)^2 = -C$, the solution is stationary.
\end{remark}
\begin{figure}[htbp]
  \centering
  \input{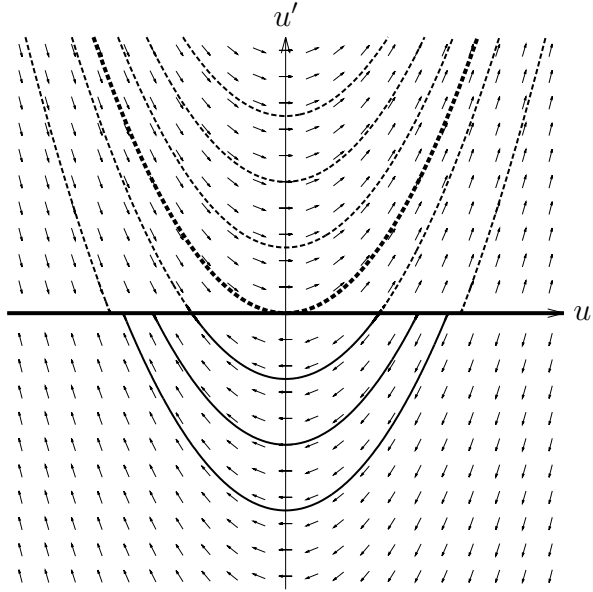}
  \caption{
    The case of $m = 4$.
    \newline 
    All solid curves are trajectories of global solutions. 
    Each point on the thick solid line $u' = 0$ is stationary.
  }
  \label{fig:case_of_4:1}
\end{figure}
\subsection{The case $A>0$, $B<0$ and $A^2 + 8B \ge 0$ $(m=3)$} 
\begin{proposition}
  \label{proposition:case3:1}
  Under the condition $A>0$ and $B<0$, 
  assume the initial value satisfies $u(0) \le 0$ and $u'(0) < 0$.
  Then there exists $T > 0$ such that 
  the solution of ODE (\ref{eq:7:1}) with the initial value exists on $[0, T]$ 
  and it satisfies 
  $u(T) < 0$ and $u'(T) = 0$.
\end{proposition}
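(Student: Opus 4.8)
The plan is to track the sign of $u$, $u'$, and $u''$ along the solution starting from $u(0)\le 0$, $u'(0)<0$, and show that $u'$ must return to $0$ before $u$ can escape to $-\infty$ in finite time or before the solution ceases to exist. Recall $A>0$, $B<0$ here ($m=3$), so near $t=0$ we have $u(t)<0$ and $u'(t)<0$ (if $u(0)=0$, then since $u'(0)<0$ the function becomes strictly negative immediately). As long as $u<0$ and $u'<0$ both hold, the equation $u''=Auu'+Bu^3$ has $Auu'>0$ (product of two negatives times $A>0$) and $Bu^3>0$ (since $B<0$ and $u^3<0$), so $u''>0$ on this region. Hence $u'$ is strictly increasing while the solution stays in the quadrant $\{u<0,\ u'<0\}$.

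First I would use the energy function $e(u(t))=\frac12(u'(t))^2-\frac B4(u(t))^4$ from Lemma \ref{lemma:7:2}, which satisfies $\frac{d}{dt}e(u(t))=Au(u')^2$. On our region $u<0$, so $e(u(t))$ is strictly decreasing; combined with $-\frac B4(u)^4\ge 0$ (as $B<0$), this gives the a priori bound $\frac12(u'(t))^2\le e(u(0))$, i.e. $u'$ stays bounded below by the constant $-\sqrt{2e(u(0))}$. Together with $u'<0$ this also bounds $u(t)$ from below on any finite interval, so the solution cannot blow up while it remains in the quadrant: by the standard continuation criterion it extends as long as $u<0$ and $u'<0$.

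Next I would argue that $u'$ cannot stay negative forever. Since $u'$ is increasing and bounded (by the energy estimate), $u'(t)$ tends to some limit $\ell\le 0$ as $t$ increases; if $\ell<0$ then $u(t)\to-\infty$ linearly, but then $u''=Auu'+Bu^3$ — with $u\to-\infty$, $u'\to\ell<0$, so $Auu'\to+\infty$ and $Bu^3\to+\infty$ — forces $u''\to+\infty$, contradicting that $u'$ converges. Hence $\ell=0$, and $u'(t)\uparrow 0$. I must still rule out that $u'$ only reaches $0$ in the limit $t\to\infty$ rather than at a finite $T$: but if $u'\uparrow 0$ as $t\to\infty$ while $u$ stays negative and bounded (bounded because $\int u'$ converges, as $u'\to 0$ with $u'$ of one sign... actually $u'<0$ integrable is not automatic), one compares with the autonomous behavior. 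A cleaner route: on the quadrant we have $u''>0$ bounded below by $Bu^3$... instead, integrate $u''=\frac{A}{2}(u^2)'+Bu^3$ to get $u'(t)=u'(0)+\frac A2(u(t)^2-u(0)^2)+B\int_0^t u(s)^3\,ds$; since $u<0$ the last integral is negative, fighting the positive $\frac A2 u^2$ term, but the quantity $g_k(u)=u'+ku^2$ for $k$ a root of $2k^2+kA-B=0$ evolves by $\frac{d}{dt}g_k(u)=(A+2k)u\,g_k(u)$ (Lemma \ref{lemma:7:2}), an exponential law; choosing the appropriate root and using $u<0$, one deduces $g_k(u(t))$ changes sign or grows so that $u'$ is driven to $0$ in finite time. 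I expect the main obstacle to be precisely this last point — converting "$u'$ increases to $0$" into "$u'=0$ at some finite $T$" — and the tool for it is the explicit exponential formula \eqref{eq:7:2:3} for $g_k(u(t))$ together with the sign condition $A^2+8B\ge0$ guaranteeing $k$ is real; once $u'(T)=0$ is reached with $u(T)<0$ (the case $u(T)=0$ being excluded because $e(u(\cdot))$ strictly decreased from a value that, with $u(0)\le0$, $u'(0)<0$, keeps $u$ away from $0$ unless it already crossed, which it cannot since $u'<0$), the proposition follows.
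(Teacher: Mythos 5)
Your setup is sound and matches the paper's: the sign analysis in the quadrant $\{u<0,\ u'<0\}$, the monotonicity of $u'$, and the use of $e(u(t))$ from Lemma \ref{lemma:7:2} to bound $u$ and $u'$ and hence rule out blow-up are all exactly what the paper does. But you stop short at the one point that \emph{is} the content of the proposition: showing that $u'$ vanishes at a \emph{finite} time $T$ rather than only in the limit $t\to\infty$. You explicitly flag this as the remaining obstacle and propose to resolve it via the exponential law \eqref{eq:7:2:3} for $g_k$, but you never carry that out ("one deduces \dots changes sign or grows so that $u'$ is driven to $0$ in finite time" is not an argument), and moreover that route would need the real roots of \eqref{eq:7:2:0}, i.e.\ the hypothesis $A^2+8B\ge 0$, which is not assumed in this proposition (it only assumes $A>0$, $B<0$). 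So as written the proof is incomplete at its crux.

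The fix is elementary and you almost wrote it down: after shifting to a point where $u<0$ strictly, note that while the solution stays in the quadrant, $u$ is decreasing, so $u(t)\le u(0)<0$ and hence $B(u(t))^3\ge B(u(0))^3>0$ (both inequalities flip because $B<0$). Therefore
\begin{equation*}
u''(t)=Au(t)u'(t)+B(u(t))^3> B(u(0))^3>0,
\end{equation*}
a \emph{constant} positive lower bound, and integrating gives $u'(t)>u'(0)+tB(u(0))^3$. The right-hand side reaches $0$ at $t=-u'(0)/(B(u(0))^3)$, so $u'$ must vanish at some finite $T$ before that time; since $u'<0$ on $(0,T)$ one gets $u(T)<u(0)\le 0$. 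This is precisely the paper's argument, and it also makes your limiting discussion ($\ell<0$ versus $\ell=0$) unnecessary. Your parenthetical at the end about excluding $u(T)=0$ via the energy is likewise not needed: $u(T)<0$ follows directly from strict monotonicity of $u$ on $[0,T]$.
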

\begin{proof}
  If $u(0) = 0$ and $u'(0) < 0$, 
  there exists a positive number $\delta > 0$ such that
  $u(t) < 0$ and $u'(t) < 0$ for $t \in (0, \delta)$, 
  hence we may assume $u(0) < 0$, $u'(0) < 0$ without loss of generality.
  Let $I = [0, T)$ be the maximal interval of existence for the solution $u$ that satisfies $u(t) < 0$ and $u'(t) < 0$.
  If $t \in I$, then $u$ is monotone decreasing and 
  $u''(t) = Au(t)u'(t) + B(u(t))^3 > B (u(t))^3 > B(u(0))^3 > 0$, 
  hence $u'$ is monotone increasing and $e(u(t))$ is monotone decreasing.
  \par
  Assume $t \in I$, 
  we have $(1/2)(u'(t))^2 + (|B|/4)(u(t))^4 = e(u(t)) < e(u(0))$, 
  therefore $u$ and $u'$ is bounded.
  Moreover for any $t \in I$, 
  by using $u''(t) > B(u(t))^3 > B(u(0))^3$, 
  we obtain
  \begin{displaymath}
    0 
    > u'(t) 
    > u'(0) + t B (u(0))^3.
  \end{displaymath}
  Since $u'(0) < 0$ and $B (u(0))^3 > 0$, 
  there exists $0 < T < -u'(0)/(B(u(0))^3)$, such that
  $u'(T) = 0$. 
  Moreover $u'(t) < 0$ for any $t \in (0, T)$, 
  we obtain $u(T) \le u(t) < u(0)$.
\end{proof}
\begin{proposition}
  \label{proposition:case3:2}
  Under the condition $A>0$, $B < 0$ and $A^2 + 8B \ge 0$, 
  let $k_1$ and $k_2$ are real solutions of (\ref{eq:7:2:0}) with $k_2 \le k_1 < 0$, 
  and assume the initial value satisfies $u(0) \le 0$, $u'(0) \ge 0$ 
  and $g_{k_1}(u(0)) < 0$.
  Then the solution of ODE (\ref{eq:7:1}) with the initial value exists on $[0, \infty)$ and 
  it satisfies 
  $u(t) \to 0$ and $u'(t) \to 0$ as $t \to \infty$.
\end{proposition}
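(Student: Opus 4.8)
The plan is to trap the trajectory $(u(t),u'(t))$ in the region $\{u<0,\ u'>0\}$, deduce from the energy identity (\ref{eq:7:2:1}) that the solution stays bounded and hence global, and then use the resulting monotonicity of $u$ to force its limit to the origin. For the trapping step, I would first note that the hypotheses $u'(0)\ge0$ and $g_{k_1}(u(0))<0$ force $u(0)<0$ strictly, since $u(0)=0$ would give $g_{k_1}(u(0))=u'(0)\ge0$. Applying (\ref{eq:7:2:3}) with $k=k_1$ — legitimate because $k_1$ solves (\ref{eq:7:2:0}) — gives $g_{k_1}(u(t))=g_{k_1}(u(0))\exp\left((A+2k_1)\int_0^t u(s)\,ds\right)$, so $g_{k_1}(u(t))<0$ for every $t$ in the maximal interval of existence (the exponential factor being positive). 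This is the key structural fact. It implies $u(t)<0$ throughout: if $t_1$ were the first time with $u(t_1)=0$, then $u<0$ on $[0,t_1)$ would force $u'(t_1)\ge0$, contradicting $g_{k_1}(u(t_1))=u'(t_1)<0$. With $u<0$ secured, one gets $u'(t)>0$ for all $t>0$: wherever $u'=0$ the equation (\ref{eq:7:1}) gives $u''=Bu^3>0$ (as $B<0$ and $u<0$), so $u'$ is strictly increasing at each of its zeros and hence, starting from $u'(0)\ge0$, cannot return to $0$.

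Next I would pass to the energy $e(u(t))=\frac12(u'(t))^2-\frac B4(u(t))^4=\frac12(u'(t))^2+\frac{|B|}4(u(t))^4\ge0$. By (\ref{eq:7:2:1}), $\frac{d}{dt}e(u(t))=Au(t)(u'(t))^2\le0$ on the existence interval, so $e$ is nonincreasing; this bounds both $u$ and $u'$, whence the solution cannot blow up and exists on all of $[0,\infty)$. Since $u$ is strictly increasing and bounded above by $0$, it has a limit $\ell\le0$; and $e(u(t))$ converges, so $(u'(t))^2=2e(u(t))-\frac{|B|}2(u(t))^4$ converges to some $c\ge0$, which forces $u'(t)\to\sqrt c$ because $u'>0$. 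If $c>0$, then $u'$ is eventually bounded below by a positive constant and $u(t)\to\infty$, a contradiction; hence $c=0$ and $u'(t)\to0$. Feeding $u(t)\to\ell$ and $u'(t)\to0$ into (\ref{eq:7:1}) gives $u''(t)\to B\ell^3$; if $\ell<0$ this limit is positive and $u'(t)\to\infty$, again impossible. Therefore $\ell=0$, i.e. $u(t)\to0$ and $u'(t)\to0$ as $t\to\infty$, as claimed.

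The delicate point is the trapping step: one must run the sign analysis of $g_{k_1}$ on the \emph{maximal} interval of existence and carefully exclude the two boundary cases $u=0$ and $u'=0$, which is exactly where the sign of $Bu^3$ in the equation and the invariance of the sign of $g_{k_1}$ are used. I note that the second root $k_2$ is not actually needed here (one could instead squeeze the trajectory between the two invariant parabolas $u'=-k_iu^2$), and that the hypothesis $A^2+8B\ge0$ enters only to guarantee that $k_1$ is real.
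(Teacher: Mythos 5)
Your argument is correct, and it reproduces the paper's key structural step while replacing the second half by a different mechanism. The trapping step --- propagating the sign of $g_{k_1}$ via (\ref{eq:7:2:3}) and then excluding the boundary cases $u=0$ (which would force $g_{k_1}(u)=u'<0$ against $u'\ge 0$) and $u'=0$ (where $u''=Bu^3>0$) on the maximal interval --- is exactly what the paper does; your preliminary remark that the hypotheses already force $u(0)<0$ is a small clean-up of the paper's separate treatment of the case $u'(0)=0$. Where you genuinely diverge is afterwards. The paper converts $g_{k_1}(u(t))<0$ into the differential inequality $u'<-k_1u^2$ and invokes the comparison theorem (Lemma \ref{lemma:6:1}) to squeeze $u(t)$ between $u(0)$ and $u(0)/(1+k_1u(0)t)$; this yields global existence and, since then $\int_0^t u(s)\,ds\to-\infty$, gives $g_{k_1}(u(t))\to 0$ by (\ref{eq:7:2:3}), which identifies $u_\infty=0$ from $g_\infty=k_1u_\infty^2$. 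You instead obtain boundedness and global existence from the monotonicity of the nonnegative energy $e(u(t))$ via (\ref{eq:7:2:1}) (valid because $Au(u')^2\le 0$ in the trapped region), and identify the limits by letting $t\to\infty$ in (\ref{eq:7:1}) itself. Both routes are complete; the paper's buys an explicit decay rate $|u(t)|\le |u(0)|/(1+k_1u(0)t)$ (which is reused in the proof of Theorem \ref{theorem:case3:2}), while yours avoids the comparison lemma entirely and runs purely on the two monotone quantities of Lemma \ref{lemma:7:2}. Your closing remarks --- that $k_2$ is never used and that $A^2+8B\ge 0$ serves only to make $k_1$ real --- are accurate.
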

\begin{proof}
  If $u'(0) = 0$, 
  since $u''(0) = A u'(0)u(0) + B(u(0))^3 = B (u(0))^3 > 0$, 
  there exists a positive number $\delta > 0$ such that
  $u(t) < 0$, $u'(t) > 0$, $g_{k_1}(u(t)) < 0$ for $t \in (0, \delta)$.
  Hence we may assume $u(0) < 0$, $u'(0) > 0$ and $g_{k_1}(u(0)) < 0$ without loss of generality.
  \par
  Let $I = [0, T)$ be the maximal interval of existence for the solution $u$ that satisfies $u(t) < 0$, $u'(t) > 0$ and $g_{k_1}(u(t)) < 0$.
  By Lemma \ref{lemma:7:2}, $g_{k_1}(u(t))$ is always negative while the solution exists.
  If $u(T) = 0$, 
  then we obtain $g_{k_1}(u(T)) = u'(T) + k_1(u(T))^2 = u'(T) < 0$.
  This contradicts to $u'(t) > 0$ for $t \in I$.
  Therefore, $u(t) < 0$ while the solution exists.
  If $u'(T_1) = 0$ for some $T_1 > 0$, 
  then 
  we obtain $u''(T_1) = B(u(T_1))^3 \ge 0$, 
  since $u''(t) = A u(t) u'(t) + B(u(t))^3$, $u'(T_1) = 0$, $u(T_1) \le 0$ and $B < 0$.
  Hence $u'$ is non-decreasing on $(T_1 - \delta, T_1)$ for some $\delta > 0$.
  This contradicts to $u'(T_1 - \delta) > 0$.
  Therefore 
  $u(t) < 0$, $u'(t) > 0$ and $g_{k_1}(u(t)) < 0$
  hold provided the solution exists.
  \par
  Now assume $t \in I$, by $g_{k_1}(u(t)) < 0$, 
  we have
  \begin{displaymath}
    u'(t) < -k_1 (u(t))^2, 
    \quad
    u(0) < 0.
  \end{displaymath}
  By Lemma \ref{lemma:6:1}, we obtain
  \begin{displaymath}
    u(0) < u(t) < \frac{u(0)}{1 + k_1 u(0) t} < 0, 
  \end{displaymath}
  and the solution exists on $[0, \infty)$, 
  since $k_1u(0) > 0$.
  In particular, 
  we obtain
  \begin{displaymath}
    \int_0^t u(s)\,ds 
    =
    \int_0^t \frac{u(0)}{1 + k_1 u(0) s}\,ds
    =
    \frac{1}{k_1} \log\left|1 + k_1 u(0) t\right| \to -\infty, 
  \end{displaymath}
  as $t\to\infty$.
  Hence we obtain that
  \begin{displaymath}
    g_\infty 
    =
    \lim_{t\to\infty} g_{k_1}(u(t))
    =
    g_{k_1}(u(0))
    \lim_{t\to\infty} 
    \exp
    \left(
      A_{k_1}
      \int_0^t u(s)\,ds
    \right)
    = 0.
  \end{displaymath}
  On the other hand, 
  since $u(t) < 0$ and $u$ is monotone increasing, 
  there exists $u_\infty \le 0$ such that $u(t) \to u_\infty$, 
  and $u'(t) \to 0$ ($t \to \infty$).
  Therefore using $g_\infty = k_1 u_\infty^2$, we obtain $u_\infty = 0$.
\end{proof}
\begin{theorem}
  \label{theorem:case3:2}
  In the case that $A>0$, $B < 0$ and $A^2 + 8B \ge 0$, 
  there exist global solutions of (\ref{eq:7:1}) on whole real line $\R$ 
  satisfying $u(t) \to 0$, $u'(t) \to 0$ ($t \to \pm \infty$).
  Hence there exist positive global solutions of (\ref{eq:0}) on $\R$ 
  satisfying $f(t) \to C$ ($t\to\pm\infty$).
\end{theorem}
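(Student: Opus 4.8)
The plan is to produce one explicit global solution of (\ref{eq:7:1}) of a ``bump type'' by gluing the two semi-global pieces supplied by Propositions~\ref{proposition:case3:1} and~\ref{proposition:case3:2}, and then to use a reflection symmetry of (\ref{eq:7:1}) to obtain its behaviour on the negative half-line at no extra cost. The symmetry is the following: if $u$ solves $u''=Auu'+Bu^3$ on an interval $I\ni0$, then $\widetilde u(t):=-u(-t)$ solves the same equation on $-I$ --- indeed $\widetilde u'(t)=u'(-t)$ and $\widetilde u''(t)=-u''(-t)$, and substituting $u(-t)=-\widetilde u(t)$, $u'(-t)=\widetilde u'(t)$ into the ODE gives $\widetilde u''=A\widetilde u\,\widetilde u'+B\widetilde u^3$. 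Hence any solution with $u(0)=0$ satisfies $\widetilde u(0)=0$ and $\widetilde u'(0)=u'(0)$, so by uniqueness for the initial value problem (the right-hand side is a polynomial in $(u,u')$, hence locally Lipschitz) $\widetilde u\equiv u$; in other words such a solution is automatically odd, its maximal interval of existence is symmetric about $0$, and it decays at $-\infty$ as soon as it decays at $+\infty$.

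Now I would carry out the gluing. Take $u$ the solution with $u(0)=0$ and $u'(0)=-1$ (any negative initial slope works). Its data $u(0)\le0$, $u'(0)<0$ fit Proposition~\ref{proposition:case3:1}, so there is $T>0$ with $u$ defined on $[0,T]$ and $u(T)<0$, $u'(T)=0$. At time $T$ the data $(u(T),u'(T))=(a,0)$ with $a<0$ satisfy $u(T)\le0$, $u'(T)\ge0$, and $g_{k_1}(u(T))=u'(T)+k_1a^2=k_1a^2<0$ since $k_1<0$ and $a\neq0$ (recall the roots $k_2\le k_1<0$ of $2k^2+Ak-B=0$ are real and negative because $A^2+8B\ge0$, $A>0$, $-B>0$). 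Thus Proposition~\ref{proposition:case3:2}, with time origin moved to $T$, gives existence on $[T,\infty)$ together with $u(t)\to0$, $u'(t)\to0$ as $t\to+\infty$. Combining the two pieces, $u$ is defined on $[0,\infty)$ with this decay; by the oddness just established it extends to all of $\R$ with $u(t)\to0$, $u'(t)\to0$ as $t\to-\infty$ as well. This is the required global solution of (\ref{eq:7:1}). I also note, for use below, that $u<0$ on $(0,\infty)$ (it is decreasing and $\le u(0)=0$ on $(0,T]$, and negative on $[T,\infty)$ by Proposition~\ref{proposition:case3:2}), whence $u>0$ on $(-\infty,0)$ by oddness.

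Finally, for (\ref{eq:0}): Proposition~\ref{proposition:5:1} shows that $f(t)=C\exp(\int_0^tu(s)\,ds)$ is, for every $C>0$, a positive $C^\infty$ solution of (\ref{eq:0}) on $\R$. Since $u$ is odd, the change of variable $s\mapsto-s$ yields $\int_0^{-t}u=\int_0^tu$, so $f$ is even; and since $u<0$ on $(0,\infty)$, the primitive $t\mapsto\int_0^tu$ is decreasing and bounded above by $0$ there, hence tends to a limit in $[-\infty,0)$ as $t\to+\infty$. Therefore $f(t)$ converges, as $t\to+\infty$ and (by evenness) as $t\to-\infty$, to one and the same constant $C'\ge0$. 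I expect the only delicate point to be verifying that the turning-point data $(a,0)$ produced by Proposition~\ref{proposition:case3:1} genuinely meet the hypotheses of Proposition~\ref{proposition:case3:2}, in particular the sign condition $g_{k_1}(a)<0$; this is precisely where the two propositions are made to dovetail, and everything else is bookkeeping with the reflection symmetry.
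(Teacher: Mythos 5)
Your proposal is correct and follows essentially the same route as the paper: start from $u(0)=0$, $u'(0)<0$, use Proposition \ref{proposition:case3:1} to reach a turning point $(u(T),0)$ with $u(T)<0$, check $g_{k_1}(u(T))=k_1 u(T)^2<0$, apply Proposition \ref{proposition:case3:2} after a time shift, and handle $t\to-\infty$ via the reflection $v(t)=-u(-t)$. Your explicit oddness argument (and the resulting evenness and convergence of $f$) merely spells out what the paper compresses into ``consider the backward solution,'' so no gap remains.
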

\begin{proof}
  Let the initial condition satisfy $u(0) = 0$ and $u'(0) < 0$, 
  then, by Proposition \ref{proposition:case3:1}, 
  there exists $T > 0$ such that 
  the solution exists on $[0, T]$ and 
  it satisfies $u(T) < 0$ and $u'(T) = 0$.
  \par
  If $u(T) < 0$ and $u'(T) = 0$, 
  then we have $g_{k_1}(u(T)) = k_1(u(T))^2 < 0$.
  Hence, 
  by using time-shift $t \mapsto t - T$, and Proposition \ref{proposition:case3:2}, 
  the solution satisfies $u(T) < 0$ and $u'(T) = 0$ extends to $[T, \infty)$, 
  and it satisfies $u(t) \to 0$ and $u'(t) \to 0$ ($t \to \infty$).
  Therefore, 
  we obtain the solution on $[0, \infty)$ with the initial value $u(0) = 0$ and
  $u'(0) < 0$.
  \par
  Consider the backward solution of the ODE, 
  we may easily prove that 
  the solution extends on the whole real line $\R$, 
  and it satisfies $u(t) \to 0$ and $u'(t) \to 0$ ($t \to -\infty$).
\end{proof}
\begin{theorem}
  \label{theorem:case3:3}
  Under the condition $A>0$, $B < 0$ and $A^2 + 8B \ge 0$, 
  the equation (\ref{eq:7:1}) admits no non-trivial periodic solutions.
  Hence the equation (\ref{eq:0}) admins no positive non-trivial periodic solutions.
\end{theorem}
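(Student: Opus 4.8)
The plan is to integrate $u$ once: if (\ref{eq:7:1}) had a non-trivial periodic solution $u$, then $\alpha(t)\defeq\int_0^t u(s)\,ds$ would be a non-constant periodic solution of a first-order autonomous ODE whose right-hand side has at most one real zero, and that is impossible by an elementary phase-line argument. So let $u$ be a non-trivial $P$-periodic solution of (\ref{eq:7:1}). For $m=3$ we have $A=5>0$, $B=-2<0$ and $A^2+8B=9>0$, so the quadratic (\ref{eq:7:2:0}) has two distinct real roots $k_2<k_1<0$; from $A+2k=(A\pm\sqrt{A^2+8B})/2$ one gets $A_{k_i}\defeq A+2k_i>0$ for $i=1,2$, while $k_1-k_2=\sqrt{A^2+8B}/2>0$. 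Set $c_i\defeq g_{k_i}(u(0))$. Since $g_{k_1}(u(t))-g_{k_2}(u(t))=(k_1-k_2)u(t)^2$, formula (\ref{eq:7:2:3}) of Lemma \ref{lemma:7:2} gives
\[
  u(t)^2=\frac{c_1\,e^{A_{k_1}\alpha(t)}-c_2\,e^{A_{k_2}\alpha(t)}}{k_1-k_2}\,,\qquad \alpha(t)\defeq\int_0^t u(s)\,ds\,.
\]
In particular $c_1=c_2=0$ would force $u\equiv0$, so at least one of $c_1,c_2$ is nonzero.

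First I would show $\int_0^P u(s)\,ds=0$. As $u$ is $P$-periodic, so is each $g_{k_i}(u(\cdot))$, and evaluating (\ref{eq:7:2:3}) over one period gives $c_i=c_i\exp(A_{k_i}\int_0^P u)$; choosing an index $i$ with $c_i\neq0$ and using $A_{k_i}>0$ forces $\int_0^P u=0$. Hence $\alpha$ is $C^\infty$, $P$-periodic, and non-constant (because $\alpha'=u\not\equiv0$), and by the displayed identity it solves the autonomous equation $(\alpha')^2=\Phi(\alpha)$ with $\Phi(s)\defeq(c_1 e^{A_{k_1}s}-c_2 e^{A_{k_2}s})/(k_1-k_2)$.

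Finally I would note that $\Phi$ has at most one real zero: if $c_1\neq0$ then $\Phi(s)=0$ is equivalent to $e^{(A_{k_1}-A_{k_2})s}=c_2/c_1$, which has at most one solution since $A_{k_1}\neq A_{k_2}$; and if $c_1=0$ then $c_2\neq0$ and $\Phi(s)=-c_2 e^{A_{k_2}s}/(k_1-k_2)$ never vanishes. On the other hand, a non-constant $P$-periodic $C^1$ function $\alpha$ attains a maximum $M$ and a minimum $\mu$ with $\mu<M$, and $\alpha'$ vanishes at these interior extrema, so $\Phi(M)=\Phi(\mu)=0$, giving two distinct zeros, a contradiction. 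Thus (\ref{eq:7:1}) admits no non-trivial periodic solution, and consequently so does (\ref{eq:0}): a positive non-constant $P$-periodic $f$ would make $u=f'/f$ a non-trivial $P$-periodic solution of (\ref{eq:5:6}), which coincides with (\ref{eq:7:1}), by Proposition \ref{proposition:5:1}, contradicting what was just shown. The delicate point is establishing that $\alpha$ is genuinely periodic: this is precisely where the vanishing of $\int_0^P u$, read off from the multiplicative (exponential) structure of $g_k$ in (\ref{eq:7:2:3}), is indispensable, and where the degenerate cases $c_i=0$ must be treated by hand; once $\alpha$ is periodic, the conclusion follows from the phase-line observation above.
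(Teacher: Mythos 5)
Your argument is correct, and it takes a genuinely different route from the paper's. The paper deduces the theorem from Proposition \ref{proposition:case3:2}: a non-trivial periodic $u$ must have a time $T$ with $u'(T)=0$ and $u(T)\neq 0$, and (after the symmetry $v(t)=-u(-t)$ if needed) that initial state lies in the basin where Proposition \ref{proposition:case3:2} forces $u(t)\to 0$ as $t\to\infty$, contradicting periodicity. You instead combine the two exponential invariants $g_{k_1}$, $g_{k_2}$ of Lemma \ref{lemma:7:2} into the first-integral relation $(\alpha')^2=\Phi(\alpha)$ for $\alpha(t)=\int_0^t u(s)\,ds$, prove $\int_0^P u=0$ from the multiplicative structure of (\ref{eq:7:2:3}), and conclude by counting zeros of $\Phi$ against the two distinct critical values of the non-constant periodic primitive $\alpha$. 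Your route is self-contained (it bypasses the convergence analysis of Proposition \ref{proposition:case3:2}) and it explains structurally why closed orbits can only arise when (\ref{eq:7:2:0}) has no real roots, which dovetails with the conjecture in the paper's Remarks subsection. Two small caveats. First, your proof as written requires $k_1\neq k_2$, i.e.\ the strict inequality $A^2+8B>0$, whereas the theorem's hypothesis formally allows $A^2+8B=0$; that boundary case never occurs for the equations arising from (\ref{eq:5:6}) (since $A^2+8B=m^2/(m-2)^2$), but a reader wanting the statement in full generality would need a separate one-invariant argument there. Second, you open by specializing to $m=3$, yet nothing in your argument uses those numerical values beyond $A>0$, $B<0$ and $A^2+8B>0$, so that restriction is harmless and could simply be dropped.
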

\begin{proof}
  Assume that the equation (\ref{eq:7:1}) admits a non-trivial periodic solution $u$.
  If $u$ satisfies $u'(t) > 0$ or $u'(t) < 0$ for any $t \in \R$, 
  then $u$ must be a monotone function which never occurs because $u$ is periodic.
  Hence we obtain that
  there exists $T > 0$ such that $u'(T) = 0$.
  Moreover if there exists $T > 0$ such that $u'(T) = u(T) = 0$, 
  then $u$ is trivial.
  Therefore, we may assume that
  there exists $T > 0$ such that $u'(T) = 0$ and $u(T) \not= 0$.
  If $u(T) > 0$, considering $v(t) = -u(-t)$, 
  we may assume $u(T) < 0$ without loss of generality.
  By Proposition \ref{proposition:case3:2}, 
  $u$ should satisfy $u(t) \to 0$ as $t \to \infty$.
  Hence $u$ is not periodic.
\end{proof}
\begin{figure}[htbp]
  \centering
  \input{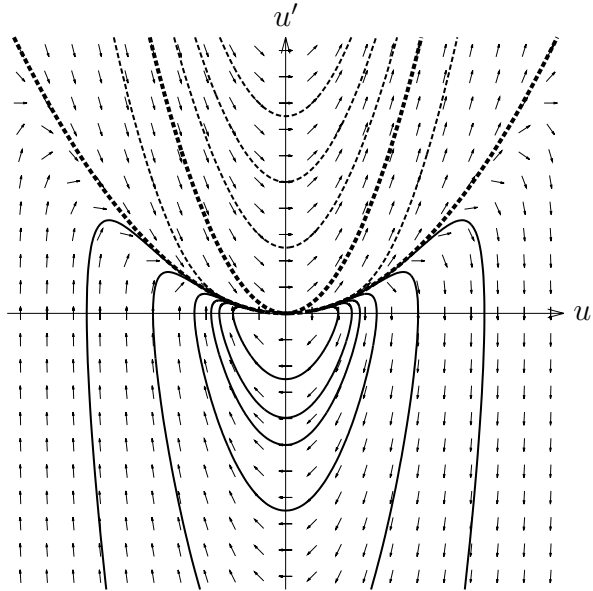}
  \caption{The case of $m = 3$.}
  \label{fig:case_of_3:1}
\end{figure}
\begin{figure}[htbp]
  \centering
  \input{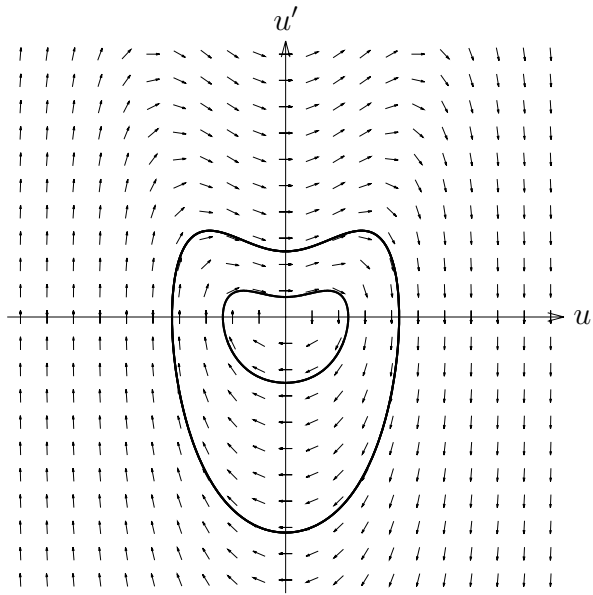}
  \caption{The case of $A^2 + 8B < 0$ ($A = 2$, $B = -4$).}
  \label{fig:case_of_3:2}
\end{figure}
\subsection{Remarks}
By above arguments, 
we also obtain that if the quadratic equation (\ref{eq:7:2:0}) has real solutions $k_1$, $k_2$ and
they have same sign, then there exist global bounded solutions of (\ref{eq:7:1}), 
and if they have different sign, 
then there exist no global solutions of (\ref{eq:7:1}).
\par
On contrary, we conjecture that 
under the condition $A^2 + 8 B < 0$ (i.e., (\ref{eq:7:2:0}) has no real solutions), 
all solutions of (\ref{eq:7:1}) are periodic.
If $A = 0$, the equation (\ref{eq:7:1}) can be written $u'' = -\kappa^4 u^3$, 
Solutions of this equation are written as $u(t) = C \sn(i, C\kappa (t+t_0))$ by using Jacobi's sn, 
and it is well-known that they are periodic (cf. Remark \ref{remark:6:4}).
For $A \not= 0$, 
numerical experiments support this conjecture (see Figure \ref{fig:case_of_3:2}).
However set $A = (8-m)/(m-2)$, $B = 2(m-4)/(m-2)^2$, 
then $A^2 + 8B = m^2/(m-2)^2$, 
therefore, 
there are no real numbers $m$ satisfying the condition $A^2 + 8 B < 0$.
\par
Finally, we note that we use classical Runge-Kutta method (Figures \ref{fig:case_of_8:1}, \ref{fig:case_of_7:1}, \ref{fig:case_of_9:1}, \ref{fig:case_of_4:1} and \ref{fig:case_of_3:1}), 
and Gauss method of order 6 (Figure \ref{fig:case_of_3:2}) as numerical integrators (cf. \cite{Heirer}).
\section{Biharmonic maps between product Riemannian manifolds}
Finally, we give nice applications. 
Let us consider the product Riemannian manifolds, 
$M \defeq \R \times \Sigma^{m-1}$, and 
$N \defeq \R\times P$, respectively, 
where $\R$ is a line
with the standard Riemannian metric $g_1$, 
$\Sigma^{m-1}$ is an $(m-1)$-dimensional manifold with a Riemannian metric $g_2$ $(m=3,4)$, 
and $P$ is a manifold with Riemannian metric $h_2$, respectively. 
Let us take the product Riemannian metrics 
$g=g_1+g_2$ on $M$, 
and $h=g_1+h_2$ on $N$, respectively. 
\par
Then, for every smooth map 
$\varphi=(\varphi_1,\varphi_2) \colon M\rightarrow N$, 
with $\varphi_1 \colon \R \rightarrow \R$, 
and $\varphi_2 \colon \Sigma^2\rightarrow P$, 
the tension field $\tau(\varphi)$ is given as 
\begin{displaymath}
  \tau(\varphi)=(\tau(\varphi_1),\tau(\varphi_2))\in \Gamma(\varphi^{-1}TN)=
  \Gamma(\varphi_1^{-1}T{\mathbb R}\times 
  \varphi_2^{-1}TN).
\end{displaymath}
Thus, $\varphi$ is harmonic if and only if both 
(1) $\varphi_1 \colon (\R,g_1)\rightarrow (\R,g_1)$ is harmonic, 
and  
(2) $\varphi_2 \colon (\Sigma^{m-1},g_2)\rightarrow (P,h_2)$ is harmonic.  
Notice that all the harmonic maps $\varphi_1 \colon (\R,g_1)\rightarrow (\R,g_1)$ are affine functions  
$\R \ni x\mapsto ax+b\in \R$ for some constants $a$ and $b$. 
\par
Now we
define a conformal Riemannian metric 
$\widetilde{g}=\widetilde{f}^{2/(m-2)}g$ 
with $\widetilde{f}(x,y)=f(t)$ ($t=x\in \R$, $y\in \Sigma^{m-1}$). 
\par
Then, we can easily calculate that 
\begin{align*}
  \nabla^g f
  &=f'\,\frac{\partial}{\partial t},\\
  \varphi_{\ast}(\nabla^g f)
  &=\varphi_1{}_{\ast}(f'\frac{\partial}{\partial t})=a f'\frac{\partial}{\partial t},\\
  \Delta^g f
  &=-f'',\\
  \overline{\nabla}_{\nabla^g f}
  \varphi_{\ast}(\nabla^g f)
  &=a f''f' \frac{\partial}{\partial t},\\
  J_g(\varphi_{\ast}(\nabla^g f))
  &=-a f''' \frac{\partial}{\partial t}. 
\end{align*}
For a harmonic map 
$\varphi=(\varphi_1,\varphi_2) \colon (M,g)=(\R\times\Sigma^{m-1},g)\rightarrow (N,h)= (\R\times P,h)$, 
it holds that 
$\varphi \colon (M,\widetilde{g}) \rightarrow (N,h)$  
is harmonic if and only if 
$\varphi_{\ast}(\nabla^g f)=a f'\frac{\partial}{\partial t}=0$ 
if and only if 
$f(t)$ is constant in $t=x$ or $\varphi_1$ is a constant. 
\par
Therefore, we obtain 
\begin{lemma}
  The above mapping 
  $\varphi\colon (M,\widetilde{g})\rightarrow (N,h)$ is a biharmonic map if and only if the mapping 
  $\varphi_1\colon \R \rightarrow \R$ is a constant or it satisfies the ODE (\ref{eq:0}). 
\end{lemma}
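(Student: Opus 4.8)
The plan is to reduce the statement to Theorem~\ref{theorem:4:1}. By hypothesis the map $\varphi=(\varphi_1,\varphi_2)$ is harmonic as a map $(M,g)\to(N,h)$, so part~(\ref{item:theorem:4:1:2}) of that theorem applies: $\varphi\colon(M,\widetilde g)\to(N,h)$ is biharmonic if and only if the identity~(\ref{eq:4:1}) holds. The whole proof then amounts to substituting into~(\ref{eq:4:1}) the five quantities $\nabla^g f$, $\varphi_{\ast}(\nabla^g f)$, $\Delta_g f$, $\overline{\nabla}_{\nabla^g f}\varphi_{\ast}(\nabla^g f)$ and $J_g(\varphi_{\ast}(\nabla^g f))$ computed immediately above the statement, using in addition that $|\nabla^g f|_g^2=(f')^2$ because $\partial/\partial t$ is a $g$-unit vector field.

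Carrying out the substitution, every summand of~(\ref{eq:4:1}) is a scalar function times $\partial/\partial t$ and carries an overall factor $a$, the slope of the affine map $\varphi_1$. Using $\Delta_g f=-f''$, collecting the two terms proportional to $f f' f''$ (their coefficients being $\tfrac{m}{m-2}$ and $\tfrac{m+2}{m-2}$, which add to $\tfrac{2(m+1)}{m-2}$), and factoring out $a\,\partial/\partial t$, the left-hand side of~(\ref{eq:4:1}) becomes
\begin{equation*}
  -a\left\{f^2 f'''-\frac{2(m+1)}{m-2}\,f f' f''+\frac{m^2}{(m-2)^2}(f')^3\right\}\frac{\partial}{\partial t},
\end{equation*}
and the expression inside the braces is precisely the left-hand side of the ODE~(\ref{eq:0}).

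Hence~(\ref{eq:4:1}) holds exactly when $a=0$ or when $f$ solves~(\ref{eq:0}); and since $\varphi_1\colon\R\to\R$ is the affine map $x\mapsto ax+b$, the case $a=0$ is exactly the case that $\varphi_1$ is constant, which gives the claimed equivalence. I do not expect any genuine difficulty here — the statement is a direct consequence of Theorem~\ref{theorem:4:1} and the explicit formulas preceding it — and the only points needing care are the bookkeeping of coefficients so that the two $f f' f''$ terms combine into the coefficient $\tfrac{2(m+1)}{m-2}$ appearing in~(\ref{eq:0}), and the sign check that makes the braces coincide with the left side of~(\ref{eq:0}) rather than its negative.
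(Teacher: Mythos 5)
Your proposal is correct and is essentially the paper's own argument: the paper omits an explicit proof, but the five quantities computed immediately before the lemma are clearly intended to be substituted into (\ref{eq:4:1}) of Theorem \ref{theorem:4:1}, and your bookkeeping (the coefficients $\tfrac{m}{m-2}+\tfrac{m+2}{m-2}=\tfrac{2(m+1)}{m-2}$, the overall factor $-a$, and the identification of $a=0$ with $\varphi_1$ constant) checks out.
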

Thus, we obtain the following theorem which answers our Problem in the Section \ref{sec:4}, 
in the case of the (non-compact) product Riemannian manifolds, and the product harmonic maps.  
\begin{theorem}
  \label{theorem:9:1}
  For every harmonic map $\varphi \colon (\Sigma^{m-1},g)\rightarrow (P,h)$, 
  let us define 
  $\widetilde{\varphi} \colon \R \times \Sigma^{m-1}\ni (x,y)\mapsto 
  (ax+b,\varphi(y))\in \R \times P$ $(m=3,4)$, where 
  $a$ and $b$ are constants. 
  Then, 
  \begin{enumerate}
  \item 
    In the case $m=3$, 
    the mapping $\widetilde{\varphi} \colon (\R \times \Sigma^2,\widetilde{f}^2 g)\rightarrow (\R \times P,h)$ 
    is biharmonic, but not harmonic if $a\not=0$.
  \item 
    In the case $m=4$, the mapping  
    $\widetilde{\varphi} \colon (\R \times \Sigma^3,\frac{1}{\cosh x} g)
    \rightarrow (\R \times P,h)$ is biharmonic, but not harmonic if $a\not=0$.   
  \end{enumerate}
\end{theorem}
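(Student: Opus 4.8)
The plan is to reduce the biharmonicity of $\widetilde{\varphi}$ to the scalar ODE (\ref{eq:0}) via the product structure, and then to feed in the solutions produced in Section 7. First I would observe that $\widetilde{\varphi}=(\varphi_1,\varphi)$ is a product map with $\varphi_1\colon\R\ni x\mapsto ax+b\in\R$ affine, hence a harmonic self-map of $(\R,g_1)$, while $\varphi\colon(\Sigma^{m-1},g_2)\to(P,h_2)$ is harmonic by hypothesis. Since the tension field of a product map splits as the pair of the tension fields of its factors, $\widetilde{\varphi}\colon(M,g)\to(N,h)$ is harmonic; this is exactly the hypothesis needed to apply Theorem \ref{theorem:4:1} with conformal factor $\widetilde{f}(x,y)=f(x)$.

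Next I would dispose of non-harmonicity. By Theorem \ref{theorem:4:1}(\ref{item:theorem:4:1:1}), the map $\widetilde{\varphi}\colon(M,\widetilde{g})\to(N,h)$ is harmonic iff $\widetilde{\varphi}_{\ast}(\nabla^g\widetilde{f})=0$; from the formulas listed just before Theorem \ref{theorem:9:1} one has $\nabla^g\widetilde{f}=f'\,\partial/\partial t$ and $\widetilde{\varphi}_{\ast}(\nabla^g\widetilde{f})=a f'\,\partial/\partial t$. Hence, provided $f$ is a \emph{non-constant} solution of (\ref{eq:0}) (so that $f'\not\equiv0$), the map $\widetilde{\varphi}$ fails to be harmonic precisely when $a\neq0$.

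For biharmonicity I would invoke Theorem \ref{theorem:4:1}(\ref{item:theorem:4:1:2}): $\widetilde{\varphi}\colon(M,\widetilde{g})\to(N,h)$ is biharmonic iff (\ref{eq:4:1}) holds. Substituting into (\ref{eq:4:1}) the identities $\Delta_g\widetilde{f}=-f''$, $\vert\nabla^g\widetilde{f}\vert_g^2=(f')^2$, $\overline{\nabla}_{\nabla^g\widetilde{f}}\widetilde{\varphi}_{\ast}(\nabla^g\widetilde{f})=a f' f''\,\partial/\partial t$ and $J_g(\widetilde{\varphi}_{\ast}(\nabla^g\widetilde{f}))=-a f'''\,\partial/\partial t$ from the same list, and dividing through by $-a$ (the case $a=0$ being trivial, since then $\widetilde{\varphi}$ is already harmonic, hence biharmonic), the vector equation (\ref{eq:4:1}) collapses: using $\tfrac{m}{m-2}+\tfrac{m+2}{m-2}=\tfrac{2(m+1)}{m-2}$ one recovers precisely $f^2f'''-2\tfrac{m+1}{m-2}ff'f''+\tfrac{m^2}{(m-2)^2}(f')^3=0$, i.e. (\ref{eq:0}). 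Equivalently, this is the content of the lemma stated immediately before Theorem \ref{theorem:9:1}. Thus $\widetilde{\varphi}\colon(M,\widetilde{g})\to(N,h)$ is biharmonic iff $f$ solves (\ref{eq:0}).

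Finally I would exhibit an admissible $f$ in each dimension. For $m=3$, Theorem \ref{theorem:case3:2} supplies a positive global solution $f$ of (\ref{eq:0}) on all of $\R$, so $\widetilde{f}$ is a positive $C^{\infty}$ function on $\R\times\Sigma^2$ and $\widetilde{g}=\widetilde{f}^{2/(m-2)}g=\widetilde{f}^2g$ is a genuine Riemannian metric; combined with the two previous paragraphs this gives (1). For $m=4$, Theorem \ref{theorem:case4:1} identifies the global solutions of (\ref{eq:0}) as $f(x)=a/\cosh(bx+c)$; taking $f(x)=1/\cosh x$ gives $\widetilde{g}=\widetilde{f}^{2/(m-2)}g=\widetilde{f}\,g=\tfrac{1}{\cosh x}\,g$ on $\R\times\Sigma^3$, and (2) follows in the same way. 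The only genuinely non-formal ingredient is the existence of a positive, globally defined solution of (\ref{eq:0}) in dimensions $3$ and $4$ — which is exactly what Section 7 provides and which breaks down for $m\geq5$ by Theorem \ref{theorem:7:1}; everything else is the bookkeeping substitution described above.
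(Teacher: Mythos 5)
Your proposal is correct and follows essentially the same route as the paper: the paper reduces biharmonicity of the product map to the ODE (\ref{eq:0}) via the computed formulas for $\nabla^g f$, $\Delta_g f$, $\overline{\nabla}_{\nabla^g f}\varphi_{\ast}(\nabla^g f)$ and $J_g(\varphi_{\ast}(\nabla^g f))$ (the lemma preceding Theorem \ref{theorem:9:1}), and then appeals to the existence results of Section 7 (Theorems \ref{theorem:case3:2} and \ref{theorem:case4:1}) exactly as you do. Your explicit verification of the coefficient identity $\tfrac{m}{m-2}+\tfrac{m+2}{m-2}=\tfrac{2(m+1)}{m-2}$ and of the non-harmonicity via Theorem \ref{theorem:4:1}(\ref{item:theorem:4:1:1}) just makes visible the bookkeeping the paper leaves implicit.
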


\end{document}